\newcolumntype{C}[1]{>{\centering\arraybackslash}p{#1}}
\newcolumntype{L}[1]{>{\arraybackslash}p{#1}}
\newtheorem{theorem}{Theorem}[section]
\newtheorem{corollary}[theorem]{Corollary}
\newtheorem{lemma}[theorem]{Lemma}
\newtheorem{proposition}[theorem]{Proposition}
\newtheorem{fact}[theorem]{Fact}
\newtheorem{definition}[theorem]{Definition}
\def\fCenter{{\mbox{$\ \vdash\ $}}}
\newcommand{\fns}{\footnotesize}
\newcommand{\marginnote}[1]{\marginpar{\raggedright\tiny{#1}}}
\def\mc{\multicolumn}
\newcommand{\mneg}{\ensuremath{^{\bot}}\xspace}
\newcommand{\mtop}{\ensuremath{1}\xspace}
\newcommand{\mand}{\ensuremath{\otimes}\xspace}
\newcommand{\mrarr}{\ensuremath{\multimap}\xspace}
\newcommand{\mbot}{\ensuremath{\bot}\xspace}
\newcommand{\mor}{\ensuremath{\invamp}\xspace}
\newcommand{\mdrarr}{\ensuremath{\multimapdot}\xspace}
\newcommand{\MNEG}{\ensuremath{\ast}\xspace}
\newcommand{\MTOPBOT}{\ensuremath{\Phi}\xspace}
\newcommand{\MANDOR}{\ensuremath{\,,}\xspace}
\newcommand{\MARR}{\ensuremath{\gg}\xspace}
\newcommand{\aatop}{\ensuremath{\top}\xspace}
\newcommand{\aand}{\ensuremath{\with}\xspace}
\newcommand{\bigaand}{\ensuremath{\bigwith}\xspace}
\newcommand{\ararr}{\ensuremath{\,{\raisebox{-0.065ex}{\rule[0.5865ex]{1.38ex}{0.1ex}}\mkern-1mu\gtrdot}\,}\xspace}
\newcommand{\abot}{\ensuremath{0}\xspace}
\newcommand{\aor}{\ensuremath{\oplus}\xspace}
\newcommand{\bigaor}{\ensuremath{\bigoplus}\xspace}
\newcommand{\adrarr}{\ensuremath{\,{\gtrdot\mkern-3mu\raisebox{-0.065ex}{\rule[0.5865ex]{1.38ex}{0.1ex}}}\,}\xspace}
\newcommand{\AANDOR}{\ensuremath{\cdot}\xspace}
\newcommand{\ATOPBOT}{\ensuremath{\textrm{I}}\xspace}
\newcommand{\AARR}{\ensuremath{\gtrdot}\xspace}
\newcommand{\ktopk}{\ensuremath{\texttt{t}}\xspace}
\newcommand{\kbotk}{\ensuremath{\texttt{f}}\xspace}
\newcommand{\kandk}{\ensuremath{\wedge}\xspace}
\newcommand{\bigkandk}{\ensuremath{\bigwedge}\xspace}
\newcommand{\kork}{\ensuremath{\vee}\xspace}
\newcommand{\bigkork}{\ensuremath{\bigvee}\xspace}
\newcommand{\krarrk}{\ensuremath{\rightarrow}\xspace}
\newcommand{\kdrarrk}{\ensuremath{\,{>\mkern-7mu\raisebox{-0.065ex}{\rule[0.5865ex]{1.38ex}{0.1ex}}}\,}\xspace}
\newcommand{\KTOPBOTK}{\ensuremath{\copyright}\xspace}
\newcommand{\KANDORK}{\ensuremath{\,;\,}\xspace}
\newcommand{\KARRK}{\ensuremath{>}\xspace}
\newcommand{\kneg}{\ensuremath{\neg_\oc}\xspace}
\newcommand{\ktop}{\ensuremath{\texttt{t}_\oc}\xspace}
\newcommand{\kbot}{\ensuremath{\texttt{f}_\oc}\xspace}
\newcommand{\kand}{\ensuremath{\wedge_\oc}\xspace}
\newcommand{\kor}{\ensuremath{\vee_\oc}\xspace}
\newcommand{\krarr}{\ensuremath{\rightarrow_\oc}\xspace}
\newcommand{\kdrarr}{\ensuremath{\,{>\mkern-7mu\raisebox{-0.065ex}{\rule[0.5865ex]{1.38ex}{0.1ex}}_\oc}\,}\xspace}
\newcommand{\KTOPBOT}{\ensuremath{\copyright_\oc}\xspace}
\newcommand{\KANDOR}{\ensuremath{\,;_\oc\,}\xspace}
\newcommand{\KARR}{\ensuremath{>_\oc}\xspace}
\newcommand{\topk}{\ensuremath{\texttt{t}_\wn}\xspace}
\newcommand{\botk}{\ensuremath{\texttt{f}_\wn}\xspace}
\newcommand{\andk}{\ensuremath{\wedge_\wn}\xspace}
\newcommand{\ork}{\ensuremath{\vee_\wn}\xspace}
\newcommand{\rarrk}{\ensuremath{\rightarrow_\wn}\xspace}
\newcommand{\drarrk}{\ensuremath{\,{>\mkern-7mu\raisebox{-0.065ex}{\rule[0.5865ex]{1.38ex}{0.1ex}}_\wn}\,}\xspace}
\newcommand{\TOPBOTK}{\ensuremath{\copyright_\wn}\xspace}
\newcommand{\ANDORK}{\ensuremath{\,;_\wn\,}\xspace}
\newcommand{\ARRK}{\ensuremath{>_\wn}\xspace}
\newcommand{\KKBAND}{\ensuremath{\ \rotatebox[origin=c]{90}{$\blacktriangleright$}\ \xspace}}
\newcommand{\KKRARR}{\ensuremath{\vartriangleright}\xspace}
\newcommand{\KKOR}{\ensuremath{\,\triangledown\,}\xspace}
\newcommand{\KKLARR}{\ensuremath{\vartriangleleft}\xspace}
\newcommand{\KKBARR}{\ensuremath{\blacktriangleright}\xspace}
\newcommand{\KKBLARR}{\ensuremath{\blacktriangleleft}\xspace}
\newcommand{\KKBand}{\ensuremath{\,\rotatebox[origin=c]{90}{$\triangleright$}\,\xspace}}
\newcommand{\KKor}{\ensuremath{\,\rotatebox[origin=c]{90}{$\triangleleft$}\,\xspace}}
\newcommand{\KKrarr}{\ensuremath{\,{\raisebox{-0.065ex}{\rule[0.5865ex]{1.38ex}{0.1ex}}\mkern-1mu\rhd}\,}\xspace}
\newcommand{\KKlarr}{\ensuremath{\,{\lhd\mkern-1.2mu\raisebox{-0.065ex}{\rule[0.5865ex]{1.38ex}{0.1ex}}}\,}\xspace}
\newcommand{\KKBrarr}{\ensuremath{\,{\raisebox{-0.065ex}{\rule[0.5865ex]{1.38ex}{0.1ex}}\mkern-1mu{\blacktriangleright}}\,}\xspace}
\newcommand{\KKBlarr}{\ensuremath{\,{{\blacktriangleleft}\mkern-1.2mu\raisebox{-0.065ex}{\rule[0.5865ex]{1.38ex}{0.1ex}}}\,}\xspace}
\newcommand{\WCIRCW}{\ensuremath{\circ}\xspace}
\newcommand{\BCIRCB}{\ensuremath{\bullet}\xspace}
\newcommand{\wboxw}{\ensuremath{\Box}\xspace}
\newcommand{\wdiaw}{\ensuremath{\Diamond}\xspace}
\newcommand{\bboxb}{\ensuremath{\blacksquare}\xspace}
\newcommand{\bdiab}{\ensuremath{\Diamondblack}\xspace}
\newcommand{\WCIRC}{\ensuremath{\circ_\oc}\xspace}
\newcommand{\BCIRC}{\ensuremath{\bullet_\oc}\xspace}
\newcommand{\wdia}{\ensuremath{\Diamond_\oc}\xspace}
\newcommand{\bbox}{\ensuremath{\blacksquare_\oc}\xspace}
\newcommand{\CIRCW}{\ensuremath{\circ_\wn}\xspace}
\newcommand{\CIRCB}{\ensuremath{\bullet_\wn}\xspace}
\newcommand{\boxw}{\ensuremath{\Box_\wn}\xspace}
\newcommand{\diab}{\ensuremath{\Diamondblack_\wn}\xspace}
\newcommand{\pandp}{\ensuremath{\cap}\xspace}
\newcommand{\PANDORP}{\ensuremath{\centerdot}\xspace}
\newcommand{\pwdiawp}{\ensuremath{\Diamond}\xspace}
\newcommand{\pbdiabp}{\ensuremath{\Diamondblack}\xspace}
\newcommand{\pbboxbp}{\ensuremath{\blacksquare}\xspace}
\newcommand{\pwboxwp}{\ensuremath{\Box}\xspace}
\newcommand{\PWCIRCWP}{\ensuremath{\circ}\xspace}
\newcommand{\PBCIRCBP}{\ensuremath{\bullet}\xspace}
\newcommand{\pand}{\ensuremath{\cap}\xspace}
\newcommand{\PANDOR}{\ensuremath{\centerdot}\xspace}
\newcommand{\pwbox}{\ensuremath{\Box}\xspace}
\newcommand{\pbdia}{\ensuremath{\Diamondblack}\xspace}
\newcommand{\PWCIRC}{\ensuremath{\circ}\xspace}
\newcommand{\PBCIRC}{\ensuremath{\bullet}\xspace}
\title{Linear Logic Properly Displayed}
\author{Giuseppe Greco \and
Alessandra Palmigiano\thanks{This research is supported by the NWO Vidi grant 016.138.314, the NWO Aspasia grant 015.008.054, and a Delft Technology Fellowship awarded to the second author in 2013.}
}
\date{}
\begin{document}

\maketitle

\begin{abstract}
We introduce proper display calculi for intuitionistic, bi-intuitionistic and classical linear logics with exponentials, which are sound, complete, conservative, and enjoy cut-elimination and subformula property. Based on the same design, we introduce a variant of Lambek calculus with exponentials, aimed at capturing the controlled application of exchange and associativity. {\em Properness} (i.e.~closure under uniform substitution of all parametric parts in rules) is the main interest and added value of the present proposal, and allows for the smoothest proof of cut-elimination.   Our proposal builds on an algebraic and order-theoretic analysis  of linear logic, and applies the guidelines of the multi-type methodology in the design of display calculi.\\
{\em Keywords}: Linear logic, substructural logics, algebraic proof theory,  sequent calculi, cut elimination, display calculi, multi-type calculi, proof calculi for categorial grammar.\\
{\em 2010 Math.~Subj.~Class.} 03F52, 03F05, 03G10.
\end{abstract}

\tableofcontents

\section{Introduction}

Linear Logic \cite{Girard1987} is one of the best known substructural logics, and the best known example
of a resource-sensitive logic. The fact that  formulas are treated as resources implies that e.g.~two copies of a given assumption $A$ guarantee a different proof-power than three copies, or one copy.  From an algebraic perspective, this fact translates in the stipulation that linear conjunction and disjunction, denoted $\mand$ and $\mor$ respectively, are {\em not} idempotent, in the sense that none of the inequalities composing the identities $a\mand a = a = a\mor a$ are valid in linear algebras.  From a proof-theoretic perspective, this fact translates into the well known stipulation that unrestricted (left- and right-) weakening and contraction rules cannot be included in Gentzen-style presentations of linear logic.

However,  resources might exist which are available {\em unlimitedly}; that is, having one or more copies of these special resources  guarantees the same proof-power. To account for the co-existence of general and unlimited resources, the language of linear logic includes, along with the connectives $\mand$ and $\mor$ (sometimes referred to as the {\em multiplicative} conjunction and disjunction), also {\em additive} conjunction and disjunction, respectively denoted $\aand$ and $\aor$, which are idempotent (i.e.~$A\aand A = A = A\aor A$ for every formula $A$). Moreover, the language of linear logic includes the modal operators $\oc$ and $\wn$, called {\em exponentials}, which respectively govern the controlled application of left- and right-weakening and contraction rules for formulas under their scope,
 algebraically encoded by the following identities: 
\[ \oc (A\aand B) = \oc A \mand \oc B\quad \quad \wn (A\aor B) = \wn A\mor \wn B.\]
The interplay between additive and multiplicative connectives, mediated by exponentials, is the main hurdle to a smooth proof-theoretic treatment of linear logic. Indeed, in the 
extant formalisms, this interplay is  encoded by means of rules   the parametric parts (or contexts) of which are not arbitrary, and hence closed under arbitrary substitution, but are {\em restricted} in some way. These restricted contexts create additional complications in the definition of smooth and general reduction strategies for syntactic cut-elimination.

In the present paper, proof calculi for intuitionistic and classical linear logics are introduced in which all parameters in rules occur {\em unrestricted}. This is possible thanks to the introduction of a {\em richer} language in which general and unlimited resources are assigned different {\em types}, each of which  interpreted by a different type of algebra (linear algebras for general resource-type terms, and (bi-)Heyting algebras or Boolean algebras for unlimited resource-type terms), and the interaction between these types is mediated by  pairs of adjoint connectives, the composition of which captures Girard's exponentials $\oc$ and $\wn$ as defined connectives. The proof-theoretic behaviour of the adjoint connectives is that of standard normal modal operators. Moreover, the information capturing the essential properties of the exponentials can be  expressed in the new language  by means of identities of a syntactic shape called {\em analytic inductive} (cf.~\cite{GMPTZ}), which guarantees that they can be equivalently encoded into analytic rules. The metatheory of these calculi is smooth and encompassed in a general theory (cf.~\cite{GMPTZ,CiabattoniRamanayake2016,TrendsXIII}), so that one obtains soundness, completeness, conservativity, cut-elimination, and subformula property as easy corollaries of general facts. Moreover, the same general theory guarantees that these meta-properties transfer to all analytic  variants of the basic framework, such as non-commutative, affine, or relevant linear logic.

The calculi introduced in the present paper are designed according to the {\em multi-type} methodology, introduced in \cite{Multitype,PDL} to provide DEL and PDL with analytic calculi based on motivations discussed in \cite{GKPLori,GAV}, and further developed in \cite{TrendsXIII,BGPTW,Inquisitive,GrecoPalmigianoMultiTypeAlgebraicProofTheory}. The multi-type  methodology  refines and generalizes {\em proper display calculi} (cf.~\cite[Section 4.1]{Wansing1998}) so as to extend the benefits of their meta-theory to logics, such as linear logic, inquisitive logic, DEL and PDL, which are not properly displayable  in their single-type presentation, according to the characterization results given in \cite{Kracht1996,GMPTZ,CiabattoniRamanayake2016}. The possibility of appealing to the general multi-type methodology in the specific case of linear logic is justified by an  analysis of the algebraic  semantics  of linear logic, aimed at  identifying the different types and  their key interactions. 

\paragraph{Structure of the paper.} In Section \ref{sec:survey}, we highlight the ideas on which the approach of the present paper is based as they have occurred in the literature in linear logic and in the proof theory of neighbouring logics. Section \ref{sec: semantic environment} outlines  the algebraic and order-theoretic environment  which motivates our proposed treatment for the various linear logics which are also presented algebraically in this section. In Section \ref{sec: multi-type language}, we introduce the syntactic counterpart of the multi-type semantic environment introduced in Section \ref{sec: semantic environment}, in the form of a multi-type language,  define translations between the single-type and the multi-type languages for linear logics, and discuss how equivalent analytic reformulations can be given of non-analytic axioms in linear logic. In Section \ref{sec:ProperDisplayCalculiForLinearLogics}, we introduce  calculi for the various  linear logics. In Section \ref{sec:properties}, we prove their soundness, syntactic completeness, conservativity, cut-elimination and subformula property. In Section \ref{sec:structural control}, we apply the same techniques developed in the previous sections to the problem, pertaining to the field of categorial grammar, of developing flexible enough formal frameworks  to  account for exceptions in rules for the generation of grammatical sentences in natural language. Specifically, we introduce exponentials which control exchange and associativity in the same way in which exponentials in linear logic control weakening and contraction. We discuss conclusions and further directions in Section \ref{sec:conclusions}. Given that the present paper draws from many areas, we believe it might be convenient for the reader to include several appendices, each of which contains the definitions and facts from a given area which are used in the paper. Specifically, Appendix \ref{appendix: cut elim} reports on the definition of proper multi-type calculi and their meta-theorem; Appendix \ref{sec:analytic inductive ineq} on the definition of analytic inductive inequalities; Appendix \ref{sec: background can ext} on basic definitions and facts on canonical extensions. Appendix \ref{sec: inversion lemmas} collects the proof of the inversion lemmas which are needed for verifying the completeness. Various derivations are collected in Appendixes \ref{sec: deriving axioms}, \ref{sec:some derivations} and \ref{sec:sneak}.

\section{Precursors of the multi-type approach to linear logic}\label{sec:survey}
The multi-type methodology is aimed at developing  analytic calculi with a uniform design and excellent properties for general classes of logics, but as we will argue in the present paper, it is particularly natural for linear logic, given that precisely the proof-theoretic, algebraic and category-theoretic methods developed for linear logic have been the growth-bed for many key insights and ideas the multi-type methodology builds on. In the present section, we review some of these solutions with the specific aim of highlighting those aspects that  anticipate the multi-type approach. Our presentation is certainly not an exhaustive survey of the literature, for which we refer to \cite{Troelstra1992,Mellies2009,Benton1994}.

Right from the first paper \cite{Girard1987}, Girard describes the connectives of linear logic as arising from the {\em decomposition} of classical connectives; this decomposition makes it possible to isolate e.g.~the constructive versus the non-constructive behaviour of connectives, but also the linear versus the non-linear behaviour, using structural rules to capture each type of behaviour. In \cite{Girard1995}, Girard further expands on a conceptual framework of reference for these ideas, and in particular describes the linear behaviour as the behaviour of general actions, which can be performed at a cost, and the non-linear behaviour in terms of the behaviour of situations, or actions which can be performed at no cost (or the cost of which is negligible). The purpose of the exponential connectives is then to bridge the linear and the non linear behaviour, thus making the language of linear logic expressive enough to support a G\"odel-type translation of intuitionistic/classical logic into linear logic.

While exponentials are essential precisely for embedding intuitionistic and classical logic into linear logic, they pose significant complications when it comes to cut-elimination and normalization, as is witnessed in the so-called Girard-Tait \cite{girard87,tait} approach to cut-elimination for Girard's calculus, discussed also by Melli\`es \cite{Mellies2009}. Indeed, due to the fact that exponentials have non-standard introduction rules (the so-called promotion and dereliction rules), ad-hoc transformation steps need to be devised to account for the permutation of cuts on $\oc$- or $\wn$-formulas with restricted weakening and contraction and with promotion, giving rise to cut elimination proofs which, besides being lengthier and more cumbersome, are not structured in such a way that the same uniform reduction strategy applies.

Motivated both by these technical issues and by more general questions about ways of making different logics coexist and interact with one other, in \cite{girard:unity} Girard introduces a calculus for sequents $\Gamma; \Gamma'\vdash \Delta'; \Delta$ with different {\em maintenance zones}: the  classical zone $\Gamma'$ and $\Delta'$ where all structural rules are applicable, and the linear zones $\Gamma$ and $\Delta$, where the application of structural rules is limited according to the stipulations of the linear behaviour. The intended reading of the sequent above in terms of the linear logic language is then $\Gamma, \oc\Gamma'\vdash \wn\Delta', \Delta$. A very similar environment is Andreoli's calculus of dyadic sequents \cite{Andreoli1992}, introduced to address the issues of cut elimination and proof search.
The strategy underlying these solutions aims at enriching the language of the calculus in a way that accounts at the {\em structural} level for a neat separation of the linear behaviour from the non linear behaviour. 

Although developed independently, Girard's and Andreoli's approach has strong similarities with Belnap's general proof-theoretic paradigm of {\em display logic} \cite{Belnap1982}. What Belnap refers to as display logic is not in fact one logical system presented as a sequent calculus, but rather a {\em methodology} refining Gentzen sequent calculi through the systematic enrichment of the logical language with an additional layer of structural connectives besides the comma. This richer environment makes it possible to  enforce a neat separation of roles between {\em introduction rules} (i.e.~rules introducing principal formulas) and {\em structural rules} (i.e.~those rules expressed purely in terms of structural variables and structural connectives). Indeed, while introduction rules are defined along very uniform and rigid lines and only capture the most basic information on  each logical connective (the polarity of each coordinate),
structural rules encode most information on the behaviour of each connective and on the interaction between different connectives. Thanks to this neat separation of roles, a suitable environment is created in which the most important technical contribution of Belnap's paradigm can be stated and proved; namely, a general, smooth and robust proof strategy for cut-elimination, hinging  on the design principles we summarize as follows: (a) uniform  shape of introduction rules for all connectives, and (b) information on the specific behaviour of each connective encoded purely at the structural level. These ideas are very much aligned with Girard's considerations about the crucial role of structural rules (e.g.\ in telling the linear and non-linear behaviour apart)  in the genesis of linear logic, and succeed in creating an explicit mathematical environment in which they can be developed further.

The  alignment of Belnap's and Girard's programs is also reflected in Belnap's major motivation in introducing display calculi: creating a proof-theoretic environment capable of simultaneously accounting for ``an indefinite number of logics all mixed together including boolean [...], intuitionistic, relevance and (various) modal logics.'' This motivation drives Belnap's bookkeeping mechanism  ``permitting control in the presence of multiple logics'' which is remarkable in its elegance and power, and is based on the fact that (structural) connectives in all these logics are {\em residuated}. The engine of this bookkeeping mechanism is the property that gives display logic its name, namely the {\em display property}, requiring that for any (derivable) sequent $X\vdash Y$ and any substructure $Z$ of $X$ or $Y$, the sequent $X\vdash Y$ is provably equivalent to either a sequent of the form $Z\vdash Y'$ or of the form $X'\vdash Z$ (and exactly one of these cases occurs). Informally, a sequent calculus enjoys the display property if it is always possible to display any substructure $Z$ of a given (derivable) sequent $X\vdash Y$ either in {\em precedent position} ($Z\vdash Y'$) or in {\em succedent position} ($X'\vdash Z$) in a way that preserves logical equivalence.

However, when designing his display calculus for linear logic \cite{Belnap1990}, Belnap derogates from  his own design principle (a) and does not introduce structural counterparts for the exponentials. Without their structural counterparts, it is of course not possible to express the key properties of exponentials purely at the structural level, which also results in a violation of (b). Hence, in Belnap's calculus, these properties are still encoded in the introduction rules for $\oc$ and $\wn$, which are hence non-standard. This latter fact is the key reason why the cut elimination theorem for Belnap's display calculus for linear logic does not make as significant an improvement in smoothness over those for previous calculi as it could have made.

Belnap's derogation has a technical reason, which stems from the fact that, when seen as operations on linear algebras (cf.~Definition \ref{def:linear algebras}), the exponentials $\oc$ and $\wn$ are not residuated. Hence, if their structural counterparts were allowed in the language of Belnap's display calculus for linear logic, the resulting calculus would lose the display property, due to the fact that this property  critically hinges on the presence of certain structural rules (the so-called {\em display rules}) for all structural connectives, which would be unsound in the case of exponentials,  precisely due to their not being residuated.
Hence, to preserve the display property for his calculus for linear logic (which is the ultimate drive for his cut-elimination metatheorem), Belnap is forced to give up on the full enforcement of design principles (a) and (b).\footnote{Similar considerations apply for Belnap's treatment of the additive connectives, which also lack their structural counterparts. We address this issue in the companion paper \cite{GrecoPalmigianoLatticeLogic}.}

The differences in Belnap's and Girard-Andreoli's approaches reflect the tradeoff we are facing: unlike Belnap, Girard-Andreoli are not bound to a general strategy for cut-elimination (such as the one hinging on Belnap's display property), which leaves them the freedom to include exponentials at the structural level, at the price of not being able to develop a cut-elimination strategy via a meta-theorem. On the other hand, it is precisely the attempt to achieve cut-elimination via a general meta-theorem which prevents Belnap from using the full power of his design principles (a) and (b), and hence forces him to settle for a less than completely smooth cut-elimination result. Both solutions make improvements, which leave open space for further improvement.

Our way out of this empasse takes its move from Girard's initial idea about linear logic arising from the {\em decomposition} of classical and intuitionistic connectives, and is based on viewing exponentials as compositions of adjoint maps. This is indeed possible, and has been already pursued in the category-theoretic setting \cite{BBHdP,Benton1994,Mellies2009}. Benton \cite{Benton1994}, in particular, takes the environment of {\em adjoint models} (each consisting of a symmetric monoidal adjunction between a symmetric monoidal closed category and a cartesian closed category) as the semantic justification for the introduction of the  logical system LNL, in which the linear and non-linear behaviour ``exist on an equal footing''. Benton explores several  sequent calculi for LNL, some of which fail to be cut-free. His final proposal is perhaps the closest precursor of the multi-type calculus which is introduced in the present paper, in that it features both  linear and  nonlinear sequents, and rules indicating how to move from one to the other. However, the proof of cut elimination does not appeal to a meta-theorem, and hence does not straightforwardly transfer to axiomatic extensions of linear logic such as classical, affine or relevant linear logic \cite{Mellies2009,dosen,jacobs}.

In a nutshell, the approach pursued in the present paper is based semantically on some well known facts, which yield  the definition of the algebraic counterparts of adjoint models. Firstly, the algebraic interpretation of $\oc$ (resp.~$\wn$) is an {\em interior operator} (resp.~a {\em closure operator}) on any linear algebra $\mathbb{L}$. Hence, by general order theory, the operation $\oc$ (resp.~$\wn$) can be identified with the composition of two adjoint maps: the  map $\iota: \mathbb{L}\twoheadrightarrow K_{\oc}$ (resp.\ $\gamma: \mathbb{L}\twoheadrightarrow K_{\wn}$) onto the range of $\oc$ (resp.~$\wn$) and the natural order-embedding $e_\oc: K_{\oc}\hookrightarrow \mathbb{L}$ (resp.~$e_\wn: K_{\wn}\hookrightarrow \mathbb{L}$). 
Secondly, the interaction between $\oc$, $\mand$ and $\aand$ (resp.\ between $\wn$, $\mor$ and $\aor$) can be equivalently rephrased by saying that the poset $K_\oc$ (resp.~$K_\wn$) has a natural algebraic structure (cf.~Proposition \ref{prop: algebraic structure on kernels}). Thirdly, the  algebraic structures of $\mathbb{L}$ and $K_{\oc}$ (resp.\  $\mathbb{L}$ and $K_{\wn}$) are in fact {\em compatible} with the adjunction $e_{\oc}\dashv \iota$ (resp.~$\gamma\vdash e_{\wn}$) in a sense which is both mathematically precise and general (cf.~proof of Proposition \ref{prop: algebraic structure on kernels}), and which  provides the  algebraic underpinning of the translation from classical and intuitionistic logic to linear logic. Interestingly, a very similar compatibility condition also accounts for the  G\"odel-Tarski translation from intuitionistic logic to the modal logic S4 (cf.~algebraic analysis in \cite[Section 3.1]{CPZ:Trans}).  

The composite mathematical environment consisting of the algebras $\mathbb{L}, K_\oc$ and $K_\wn$ together with the adjunction situations between them naturally provides the interpretation for a logical language which is {\em polychromatic} in Melli\`es' terminology \cite{Mellies2009}, in the sense that  admits terms of as  many types as there are algebras in the environment. The mathematical properties of this environment provide the semantic justification for the design of the calculi introduced in Section \ref{sec:ProperDisplayCalculiForLinearLogics}, in which both display property and design principles (a) and (b) are satisfied. The change of perspective we pursue, along with Benton \cite{Benton1994}, Melli\`es \cite{Mellies2009}, and Jacobs \cite{jacobs}, is that this environment can also be taken as {\em primary} rather than derived. Accordingly, at least for proof-theoretic purposes, linear logic can be more naturally accommodated by such a composite environment than the standard linear algebras.


\section{Multi-type semantic environment for linear logic}
\label{sec: semantic environment}
In the present section, we introduce the algebraic environment which justifies semantically the multi-type approach to linear logic which we develop in Section \ref{sec:ProperDisplayCalculiForLinearLogics}. In the next subsection, we take the algebras introduced in \cite{Ono1993,Troelstra1992} as starting point. We  enrich their definition, make it more modular, and expand on the properties of the images of the algebraic interpretation of exponentials which are briefly mentioned in \cite{Troelstra1992}, leading to our notion of (right and left) `kernels'. In Section \ref{ssec:from ono axioms to eterogeneous arrows}, we expand on the algebraic significance of the interaction axioms between exponentials in terms of the existence of certain maps between kernels. In Subsection \ref{ssec: reverse engineering}, we show that linear algebras with exponentials can be equivalently presented in terms of composite environments consisting of linear algebras without exponentials and other algebraic structures, connected via suitable adjoint maps. In Subsection \ref{ssec: canonical extensions}, we report on results pertaining to the theory of canonical extensions applied to the composite environments introduced in the previous subsections. These results will be used in the development of the next sections.

\subsection{Linear algebras and their kernels}\label{ssec: linear algebras}
\begin{definition}
\label{def:linear algebras}
$\mathbb{L} = (L, \aand, \aor, \aatop, \abot, \mand,\mor, \mtop,\mbot, \mrarr)$ is an {\em intuitionistic linear algebra} (IL-algebra) if:

\begin{enumerate}
\item[IL1.] $(L, \aand, \aor, \aatop, \abot)$ is a lattice with join $\aor$, meet $\aand$, top $\aatop$ and bottom $\abot$;
\item[IL2.] $(L, \mand, \mtop)$ is a commutative monoid;
\item[IL3.] $(L, \mor, \mbot)$ is a commutative monoid;
\item[IL4.] $a \mand b \leq c$ iff $b \leq a \mrarr c$ for all $a, b, c \in L$;
\item[IL5.] $\mor$ preserves all finite meets, hence also the empty meet $\aatop$, in each coordinate.
\end{enumerate}
An IL-algebra is a {\em classical linear algebra} (CL-algebra) if
\begin{enumerate}
\item[C.] $(a\mrarr \abot)\mrarr \abot = a$ for every $a\in L$.
\end{enumerate}
We will sometimes abbreviate $a\mrarr \abot$ as $a^{\mbot}$, and write C above as $a^{\mbot\mbot} = a$.

$\mathbb{L} = (L, \aand, \aor, \aatop, \abot, \mand,\mor, \mtop,\mbot, \mrarr, \mdrarr)$ is a {\em bi-intuitionistic linear algebra} (BiL-algebra) if:
\begin{enumerate}
\item[B1.] $(L, \aand, \aor, \aatop, \abot, \mand,\mor, \mtop,\mbot, \mrarr)$ is an IL-algebra;
\item[B2.] $a \mdrarr b \leq c$ iff $b \leq a \mor c$ for all $a, b, c \in L$.
\end{enumerate}

$\mathbb{L} = (L, \aand, \aor, \aatop, \abot, \mand,\mor, \mtop,\mbot, \mrarr, \oc )$ is an {\em intuitionistic linear algebra with storage} (ILS-algebra) (resp.\ a {\em classical linear algebra with storage} (CLS-algebra)) if:

\begin{enumerate}
\item[S1.] $(L, \aand, \aor, \aatop, \abot, \mand,\mor, \mtop,\mbot, \mrarr)$ is an IL-algebra (resp.\ a CL-algebra);
\item[S2.] $\oc : L\to L$ is monotone, i.e.\ $a \leq b$ implies $\oc  a\leq \oc  b$ for all $a, b\in L$;
\item[S3.] $\oc  a\leq a$ and $\oc  a\leq \oc  \oc  a$ for every $a\in L$;
\item[S4.] $\oc  a \mand \oc  b = \oc  (a\aand b)$ and $\mtop = \oc \aatop$ for all $a, b\in L$.
\end{enumerate}

$\mathbb{L} = (L, \aand, \aor, \aatop, \abot, \mand,\mor, \mtop,\mbot, \mrarr, \oc , \wn)$ is an {\em intuitionistic linear algebra with storage and co-storage} (ILSC-algebra) (resp.\ a {\em classical linear algebra with storage and co-storage} (CLSC-algebra)) if:

\begin{enumerate}
\item[SC1.] $(L, \aand, \aor, \aatop, \abot, \mand,\mor, \mtop,\mbot, \mrarr, \oc)$ is an ILS-algebra (resp.\ a CL-algebra);
\item[SC2.] $\wn : L\to L$ is monotone, i.e.\ $a \leq b$ implies $\wn  a\leq \wn  b$ for all $a, b\in L$;
\item[SC3.] $  a\leq \wn a$ and $\wn \wn  a\leq \wn  a$ for every $a\in L$;
\item[SC4.] $\wn  a \mor \wn  b = \wn  (a\aor b)$ and $\mbot = \wn \abot$ for all $a, b\in L$;
\end{enumerate}
An ILSC-algebra $\mathbb{L}$ is {\em paired} (ILP-algebra) if:
\begin{enumerate}
\item[P1.] $\oc (a\mrarr b)\leq \wn a\mrarr \wn b$ for all $a, b\in L$.
\end{enumerate}
$\mathbb{L} = (L, \aand, \aor, \aatop, \abot, \mand,\mor, \mtop,\mbot, \mrarr, \mdrarr, \oc , \wn)$ is a {\em bi-intuitionistic linear algebra with storage and co-storage} (BLSC-algebra) if:
\begin{enumerate}
\item[BLSC1.] $(L, \aand, \aor, \aatop, \abot, \mand,\mor, \mtop,\mbot, \mrarr, \oc, \wn)$ is an ILSC-algebra;
\item[BLSC2.] $(L, \aand, \aor, \aatop, \abot, \mand,\mor, \mtop,\mbot, \mrarr, \mdrarr)$ is a BiL-algebra.
\end{enumerate}
A BLSC-algebra $\mathbb{L}$ is {\em paired} (BLP-algebra) if:
\begin{enumerate}
\item[BLP1.] $(L, \aand, \aor, \aatop, \abot, \mand,\mor, \mtop,\mbot, \mrarr, \oc, \wn)$ is an ILP-algebra;
\item[BLP2.] $\oc a\mdrarr \oc b\leq \wn (a\mdrarr  b)$ for all $a, b\in L$.
\end{enumerate}
\end{definition}
CLSC-algebras reflect the logical signature of (classical) linear logic as originally defined by Girard, although no link between the exponentials is assumed, and ILP-algebras are a slightly modified version of Ono's modal FL-algebras \cite[Definition 6.1]{Ono1993}. The notion of BLSC-algebra is a symmetrization of that of ILSC-algebra, where the linear {\em co-implication} $\mdrarr$ (aka {\em subtraction}, or {\em exclusion}) has been added to the signature as the left residual of $\mor$, as was done in \cite{Gore1998,Troelstra1992,SambinBattilottiFaggian2014}. The proof-theoretic framework introduced in Section \ref{sec:ProperDisplayCalculiForLinearLogics} will account for each of these environments modularly and conservatively.

Condition IL4 implies that $\mand$ preserves all existing joins (hence all finite joins, and the empty join $\abot$ in particular) in each coordinate, and $\mrarr$ preserves all existing meets in its second coordinate and reverses all existing joins in its first coordinate. Hence,  the following de Morgan law holds in IL-algebras:
\begin{equation}
\label{eq:intuitionistic:de:Morgan}
(a\aor b)^{\mbot} = (a)^{\mbot}\aand(b)^{\mbot}.
\end{equation}
the main difference between IL-algebras and CL-algebras is captured by the following
\begin{lemma} The following de Morgan law also holds in CL-algebras:
\begin{equation}
\label{eq:intuitionistic:de:Morgan}
(a\aand b)^{\mbot} = (a)^{\mbot}\aor(b)^{\mbot}.
\end{equation}
\end{lemma}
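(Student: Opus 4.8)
The plan is to derive the second de Morgan law from the first one, which has already been established for all IL-algebras, together with the involutivity axiom C which is available in CL-algebras.

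First I would record the two ingredients I need. By IL4 the operation $a\mapsto a\mrarr\abot = a^{\mbot}$ is antitone (as already noted after the definition, $\mrarr$ reverses joins, and hence the order, in its first coordinate), and by C we have $a^{\mbot\mbot}=a$ for every $a\in L$; in particular $(-)^{\mbot}$ is an order-reversing bijection of $L$ onto itself, equal to its own inverse. I will only really use C explicitly, plus the de Morgan law $(x\aor y)^{\mbot}=x^{\mbot}\aand y^{\mbot}$ already proved to hold in all IL-algebras.

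The core of the argument is then a two-line computation. Applying that first de Morgan law to the elements $a^{\mbot}$ and $b^{\mbot}$ in place of $x$ and $y$, and then using C in the form $a^{\mbot\mbot}=a$, $b^{\mbot\mbot}=b$, we obtain
\[
(a^{\mbot}\aor b^{\mbot})^{\mbot} \;=\; a^{\mbot\mbot}\aand b^{\mbot\mbot} \;=\; a\aand b.
\]
Applying $(-)^{\mbot}$ to both sides and using C once more on the left-hand side yields
\[
a^{\mbot}\aor b^{\mbot} \;=\; (a^{\mbot}\aor b^{\mbot})^{\mbot\mbot} \;=\; (a\aand b)^{\mbot},
\]
which is exactly the claimed identity.

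I do not expect any real obstacle here: the only points requiring a little care are that the first de Morgan law is indeed available for arbitrary elements (it is, since it holds in every IL-algebra) and that C is invoked in the right spots. Alternatively, and somewhat more conceptually, one could first note that C makes $(-)^{\mbot}$ an order-reversing involution on the lattice $(L,\aand,\aor)$, hence a lattice anti-isomorphism, and then quote the general fact that such a map interchanges finite meets and joins; but the short computation above is self-contained and avoids appealing to that generality.
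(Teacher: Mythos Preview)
Your proof is correct and takes a genuinely different, more streamlined route than the paper's. The paper argues the two inequalities separately: antitonicity of $(\cdot)^{\mbot}$ gives that $(a\aand b)^{\mbot}$ is a common upper bound of $a^{\mbot}$ and $b^{\mbot}$; for the converse, it observes that C makes $(\cdot)^{\mbot}$ surjective, and then shows directly that any upper bound of the form $c^{\mbot}$ satisfies $c = c^{\mbot\mbot}\leq a^{\mbot\mbot} = a$ and $c\leq b$, hence $c\leq a\aand b$ and $(a\aand b)^{\mbot}\leq c^{\mbot}$. Your approach instead leverages the already-established first de Morgan law $(x\aor y)^{\mbot} = x^{\mbot}\aand y^{\mbot}$ by instantiating it at $x = a^{\mbot}$, $y = b^{\mbot}$ and then applying C. This is shorter and more conceptual: it treats C as turning the first de Morgan law into the second, rather than redoing a least-upper-bound verification. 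The paper's argument, on the other hand, makes no use of the first de Morgan law and would work equally well as a standalone proof even if that law had not been stated beforehand.
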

\begin{proof}
Since by definition $(\cdot)^{\mbot}$ is antitone, $(a\aand b)^{\mbot}$ is a common upper bound of $a^{\mbot}$ and $b^{\mbot}$. Condition C  implies that $(\cdot)^{\mbot}: L\to L$ is surjective. Hence, to prove the converse inequality, it is enough to show that if $c\in L$ and $a^{\mbot}\leq c^{\mbot}$ and $b^{\mbot}\leq c^{\mbot}$, then $(a\aand b) ^{\mbot}\leq c^{\mbot}$. The assumptions imply that $c = c^{\mbot\mbot}\leq a^{\mbot\mbot} = a$, and likewise $c\leq b$. Hence $c\leq a\aand b$, which implies that $(a\aand b) ^{\mbot}\leq c^{\mbot}$ as required.
\end{proof}
By conditions S2 and S3, the operation  $\oc : L\to L$ is an interior operator on $L$ seen as a poset. Dually, by SC2 and SC3, the operation $\wn : L\to L$ is a closure operator on $L$ seen as a poset.
By general order-theoretic facts (cf.~\cite[Chapter 7]{DaveyPriestley2002}) this means that   \[\oc  = e_{\oc}\circ \iota\quad \mbox{ and }\quad \wn  =  e_{\wn}\circ \gamma,\] where $\iota: L\twoheadrightarrow \mathsf{Range}(\oc )$ and $\gamma: L\twoheadrightarrow \mathsf{Range}(\wn )$, defined by $\iota(a) = \oc  a$ and $\gamma(a) = \wn  a$ for every $a\in L$, are adjoints of the natural embeddings $e_{\oc}: \mathsf{Range}(\oc )\hookrightarrow L$ and $e_{\wn}: \mathsf{Range}(\wn )\hookrightarrow L$ as follows: \[e_{\oc}\dashv \iota \quad \mbox{ and }\quad \gamma\dashv e_{\wn},\] i.e.\ for every $a\in L$, $o\in \mathsf{Range}(\oc )$, and $c\in \mathsf{Range}(\wn)$,
 \[e_{\oc}(o)\leq a\quad \mbox{ iff }\quad o\leq \iota(a)\quad \mbox{ and }\quad \gamma(a)\leq c\quad \mbox{ iff }\quad a\leq e_{\oc}(c). \]
In what follows, we let  $K_{\oc}$ and $K_{\wn}$ be the subposets of $L$ identified by $\mathsf{Range}(\oc ) = \mathsf{Range}(\iota)$ and $\mathsf{Range}(\wn ) = \mathsf{Range}(\gamma)$ respectively. Sometimes we will refer to elements in $K_{\oc}$ as `open', and elements in $K_{\wn}$ as `closed'.
\begin{lemma} For every ILSC-algebra $\mathbb{L}$, every $\alpha\in K_{\oc}$ and $\xi\in K_{\wn}$,
\begin{equation}
\label{eq:retractions}
\iota(e_{\oc}(\alpha)) = \alpha\quad\quad \mbox{ and }\quad \quad \gamma(e_{\wn}(\xi)) = \xi.
\end{equation}
\end{lemma}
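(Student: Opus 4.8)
The plan is to recognise both identities as instances of the standard order-theoretic fact that, when a monotone surjection is adjoint to an order-embedding, the composite (taken in the appropriate order) is the identity on the smaller poset; since all the relevant maps have just been introduced, I would give the short direct verification from the axioms.

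First I would unwind the definitions recorded immediately above the statement. The map $e_{\oc}\colon K_{\oc}\hookrightarrow L$ is the inclusion of the subposet $K_{\oc}=\mathsf{Range}(\oc)$, and $\iota\colon L\twoheadrightarrow K_{\oc}$ is the corestriction of $\oc$, i.e.\ $\iota(a)=\oc a$. Hence for $\alpha\in K_{\oc}$ the composite satisfies $\iota(e_{\oc}(\alpha))=\oc\alpha$, so the first identity reduces to the claim that $\oc\alpha=\alpha$ for every $\alpha$ in the range of $\oc$. The inequality $\oc\alpha\leq\alpha$ is precisely the first half of S3 applied to $\alpha$. For the reverse inequality, since $\alpha\in\mathsf{Range}(\oc)$ I may pick $a\in L$ with $\alpha=\oc a$; then the second half of S3 applied to $a$ gives $\alpha=\oc a\leq\oc\oc a=\oc\alpha$. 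Combining the two inequalities yields $\oc\alpha=\alpha$, hence $\iota(e_{\oc}(\alpha))=\alpha$.

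The second identity is handled symmetrically, using SC2--SC3 in place of S2--S3: the composite $\gamma\circ e_{\wn}$ sends $\xi\in K_{\wn}=\mathsf{Range}(\wn)$ to $\wn\xi$; the first half of SC3 applied to $\xi$ gives $\xi\leq\wn\xi$, and writing $\xi=\wn c$ the second half of SC3 applied to $c$ gives $\wn\xi=\wn\wn c\leq\wn c=\xi$, whence $\wn\xi=\xi$. I do not foresee any real obstacle: the only point requiring care is the bookkeeping of which maps $\iota,\gamma,e_{\oc},e_{\wn}$ denote, so that the two composites are genuinely $\oc$ and $\wn$ restricted to $K_{\oc}$ and $K_{\wn}$ respectively. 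As an alternative route that avoids even S3/SC3, one can argue purely from the unit--counit inequalities of the adjunctions $e_{\oc}\dashv\iota$ and $\gamma\dashv e_{\wn}$ together with the injectivity of the embeddings $e_{\oc},e_{\wn}$; either way the argument is routine.
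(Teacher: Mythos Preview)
Your proof is correct. It differs from the paper's in the route taken: you unwind $\iota\circ e_{\oc}$ to the restriction of $\oc$ to its range and then appeal directly to the interior-operator axioms S3 (and dually SC3), whereas the paper argues purely from the adjunction $e_{\oc}\dashv\iota$ together with the fact that $e_{\oc}$ is an order-embedding. Concretely, the paper obtains $\alpha\leq\iota(e_{\oc}(\alpha))$ from the unit of the adjunction, and obtains the converse by noting that $e_{\oc}(\iota(e_{\oc}(\alpha)))\leq e_{\oc}(\alpha)$ is equivalent (again by adjunction) to a trivial reflexivity, then using injectivity of $e_{\oc}$. Your alternative remark at the end is exactly this argument. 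Your direct approach via S3/SC3 is slightly more concrete and makes explicit that the identities are just the idempotency of $\oc$ and $\wn$ on their ranges; the paper's approach has the advantage of using only the adjunction data recorded immediately before the lemma, without re-invoking the defining axioms of the exponentials.
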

\begin{proof}
We only prove the first identity, the proof of the remaining one being dual. By adjunction, $\alpha \leq \iota(e_{\oc}(\alpha))$ iff  $e_{\oc}(\alpha)\leq e_{\oc}(\alpha)$, which always holds. As to the converse inequality $\iota(e_{\oc}(\alpha))\leq \alpha$, since $e_{\oc}$ is an order-embedding, it is enough to show that $ e_{\oc}(\iota(e_{\oc}(\alpha)))\leq e_{\oc}(\alpha)$, which by adjunction is equivalent to $\iota(e_{\oc}(\alpha))\leq \iota(e_{\oc}(\alpha))$, which always holds.
\end{proof}
 In what follows,
\begin{enumerate}
\item $\kork$ and $\bigkork$ denote joins in $K_{\oc}$;
 \item $\kandk$ and $\bigkandk$ denote meets in $K_{\wn}$;
\item $\aor$ and $\bigaor$ denote joins in $L$;
\item $\aand$ and $\bigaand$ denote meets in $L$.
\end{enumerate}
 The following fact shows that $L$-joins of open elements are open, and $L$-meets of closed elements are closed.

\begin{fact}
\label{fact:join and bang}
For all (finite) set $I$,
\begin{enumerate}
\item if $\bigkork_{i\in I} \iota  (a_i)$ exists, then 
$\bigaor_{i\in I}\oc a_i = \oc (\bigaor_{i \in I} \oc  a_i)$;
\item if $\bigkandk_{i\in I} \gamma  (a_i)$ exists, then 
$\bigaand_{i\in I}\wn a_i = \wn (\bigaand_{i \in I} \wn  a_i)$.
\end{enumerate}
\end{fact}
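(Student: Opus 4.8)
The plan is to prove item (1) and to obtain item (2) by the order-dual argument. The crux is the observation that the hypothesis of (1) --- that the join $\bigkork_{i\in I}\iota(a_i)$ exists \emph{in the subposet $K_\oc$} --- is exactly what is needed in order to commute the embedding $e_\oc$ with this join; once $\bigaor_{i\in I}\oc a_i$ has been exhibited as the $e_\oc$-image of an element of $K_\oc$, the retraction identity of the previous lemma finishes the job.

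First I would record that, since $e_\oc\dashv\iota$, the map $e_\oc$ is a lower adjoint and hence preserves every join that exists in $K_\oc$. Thus, when the hypothesis holds, writing $\beta := \bigkork_{i\in I}\iota(a_i)\in K_\oc$ for this join, we have
\[
e_\oc(\beta) \;=\; \bigaor_{i\in I}e_\oc(\iota(a_i)) \;=\; \bigaor_{i\in I}\oc a_i,
\]
where the joins on the right are computed in $L$ (and exist, $I$ being finite and $L$ a lattice) and the last equality is just $\oc = e_\oc\circ\iota$. So $\bigaor_{i\in I}\oc a_i$ is of the form $e_\oc(\beta)$ with $\beta\in K_\oc$, and it remains to see that every such element is $\oc$-fixed. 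This follows by applying $e_\oc$ to the retraction identity $\iota(e_\oc(\beta)) = \beta$ and using $\oc = e_\oc\circ\iota$ once more: $\oc\bigl(e_\oc(\beta)\bigr) = e_\oc(\iota(e_\oc(\beta))) = e_\oc(\beta)$. Substituting back gives $\oc\bigl(\bigaor_{i\in I}\oc a_i\bigr) = \bigaor_{i\in I}\oc a_i$, which is (1). Equivalently, one may simply observe that $\bigaor_{i\in I}\oc a_i\in\mathsf{Range}(e_\oc) = \mathsf{Range}(\oc)$ and invoke the idempotency of the interior operator $\oc$.

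Item (2) is then proved dually: from $\gamma\dashv e_\wn$ the map $e_\wn$ is an upper adjoint, hence preserves all existing meets of $K_\wn$, so that if $\bigkandk_{i\in I}\gamma(a_i)$ exists then $e_\wn\bigl(\bigkandk_{i\in I}\gamma(a_i)\bigr) = \bigaand_{i\in I}\wn a_i$; the retraction identity $\gamma(e_\wn(\xi)) = \xi$ gives $\wn\bigl(e_\wn(\xi)\bigr) = e_\wn(\xi)$, and combining these yields $\wn\bigl(\bigaand_{i\in I}\wn a_i\bigr) = \bigaand_{i\in I}\wn a_i$. I do not expect a genuine obstacle here; the only point demanding care is to keep joins taken in $K_\oc$ (resp.\ meets taken in $K_\wn$) notationally and conceptually distinct from those taken in $L$, and to note that the finiteness of $I$ and the existence hypothesis are used only to guarantee that the supremum (resp.\ infimum) being pushed through the adjoint is actually available.
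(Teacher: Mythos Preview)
Your proof is correct and follows essentially the same approach as the paper: both arguments use that $e_\oc$, as a left adjoint, preserves existing joins, together with the retraction identity $\iota\circ e_\oc = \mathrm{Id}_{K_\oc}$ (equivalently $e_\oc\circ\iota\circ e_\oc = e_\oc$) and the definition $\oc = e_\oc\circ\iota$. The only cosmetic difference is that you first isolate $\bigaor_{i\in I}\oc a_i = e_\oc(\beta)$ and then argue that every $e_\oc$-image is $\oc$-fixed, whereas the paper runs the same steps as a single chain of equalities.
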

\begin{proof}
We only prove the first item, the proof of the second one being dual. The following chain of identities holds:
\begin{center}
\begin{tabular}{r c l l}
$\oc (\bigaor_{i\in I}\oc a_i)$& $ =$& $ \oc (\bigaor_{i\in I}e_{\oc} (\iota(a_i)))$& ($\oc  = e_{\oc}\circ \iota$)\\
& $ =$& $ \oc \circ e_{\oc}(\bigkork_{i\in I}\iota(a_i))$& ($e_{\oc}$ preserves existing joins)\\
& $ =$& $ e_{\oc}\circ \iota\circ e_{\oc} (\bigkork_{i\in I}\iota(a_i))$ & ($\oc  = e_{\oc}\circ \iota$) \\
& $ =$& $e_{\oc} (\bigkork_{i\in I}\iota(a_i))$ & ($e_{\oc}\circ\iota\circ e_{\oc} = e_{\oc}$)\\
& $ =$& $\bigaor_{i\in I}e_{\oc}(\iota(a_i))$ & ($e_{\oc}$ preserves existing joins) \\
& $ =$& $\bigaor_{i\in I}\oc a_i$. & ($\oc  = e_{\oc}\circ \iota$) \\
\end{tabular}
\end{center}
\end{proof}

Hence,  the subposets $K_{\oc}$ and $K_{\wn}$ are, respectively, a $\aor$-subsemilattice and a $\aand$-subsemilattice of $(L, \aor, \aand, \abot,\aatop)$: indeed,

\[e_{\oc}(\bigkork_{i\in I} \iota (a_i)) = \bigaor_{i\in I}e_{\oc}(\iota(a_i)) = \bigaor_{i\in I}\oc  a_i\quad \mbox{ and }\quad e_{\wn}(\bigkandk_{i\in I} \gamma (a_i)) = \bigaand_{i\in I}e_{\wn}(\gamma(a_i)) = \bigaand_{i\in I}\wn  a_i.\]

\begin{definition}
\label{def:kernel}
For any ILS-algebra $\mathbb{L} = (L, \aand, \aor, \aatop, \abot, \mand, \mtop, \mrarr, \oc )$, let the {\em left-kernel} of $\mathbb{L}$ be the algebra $\mathbb{K}_{\oc} = (K_{\oc}, \kand, \kor, \ktop, \kbot, \krarrk)$ defined as follows:
\begin{itemize}
\item[LK1.] $K_{\oc}: = \mathsf{Range}(\oc ) = \mathsf{Range}(\iota)$, where $\iota: L\twoheadrightarrow K_{\oc}$ is defined by letting $\iota(a) = \oc a$ for any $a\in L$;
\item[LK2.] $\alpha \kor \beta: = \iota(e_{\oc}(\alpha) \aor e_{\oc} (\beta))$ for all $\alpha, \beta\in K_{\oc}$;
\item[LK3.] $\alpha \kand \beta: = \iota (e_{\oc}(\alpha) \aand e_{\oc} (\beta))$ for all $\alpha, \beta\in K_{\oc}$;
\item[LK4.]  $\ktop:  = \iota (\aatop)$;
\item[LK5.]  $\kbot: = \iota(\abot)$;
\item[LK6.] $\alpha\rightarrow \beta: = \iota (e_{\oc} (\alpha)\mrarr e_{\oc} (\beta))$.
\end{itemize}
We also let
\begin{itemize}
\item[LK7.] $\kneg \alpha: = \alpha\rightarrow \kbot = \iota (e_{\oc}(\alpha)\mrarr e_{\oc}( \kbot)) = \iota (e_{\oc}(\alpha)\mrarr \abot) = \iota (e_{\oc}(\alpha)^{\mbot})$.
\end{itemize}
For any ILSC-algebra $\mathbb{L} = (L, \aand, \aor, \aatop, \abot, \mand, \mtop, \mrarr, \oc , \wn)$, let the {\em right-kernel} of $\mathbb{L}$ be the algebra $\mathbb{K}_{\wn} = (K_{\wn}, \kand, \kor, \ktop, \kbot)$ defined as follows: 
\begin{itemize}
\item[RK1.] $K_{\wn}: = \mathsf{Range}(\wn ) = \mathsf{Range}(\gamma)$, where $\gamma: L\twoheadrightarrow K_{\wn}$ is defined by letting $\gamma(a) = \wn a$ for any $a\in L$;
\item[RK2.] $\xi \kand \chi: = \gamma(e_{\wn} (\xi) \aand e_{\wn} (\chi))$;
\item[RK3.] $\xi \kor \chi: = \gamma (e_{\wn} (\xi) \aor e_{\wn} (\chi))$;
\item[RK4.]  $\kbot: = \gamma (\abot)$;
\item[RK5.]  $\ktop: = \gamma(\aatop)$.
\end{itemize}
For any BLSC-algebra $\mathbb{L} = (L, \aand, \aor, \aatop, \abot, \mand, \mtop, \mrarr, \mdrarr, \oc , \wn)$, let the {\em right-kernel} of $\mathbb{L}$ be the algebra $\mathbb{K}_{\wn} = (K_{\wn}, \kand, \kor, \ktop, \kbot, \kdrarr)$ such that $\mathbb{K}_{r} = (K_{\wn}, \kand, \kor, \ktop, \kbot)$ is defined  as above, and moreover:
\begin{itemize}
\item[RK6.] $\xi\kdrarrk \chi: = \gamma (e_{\wn} (\xi)\mdrarr e_{\wn} (\chi))$.
\end{itemize}
\end{definition}
It is interesting to notice the fit between  the definition of the algebraic structure of the kernels given above and the translation by which intuitionistic formulas embed into linear formulas given in \cite[Chapter 5]{Girard1987}.
The following proposition develops and expands on an observation made by Troelstra (cf.\ \cite[Exercise after Lemma 8.17]{Troelstra1992}).
\begin{proposition}\label{prop: algebraic structure on kernels}
For any  $\mathbb{L}$,
\begin{enumerate}
\item if $\mathbb{L}$ is an ILS-algebra, then   $\mathbb{K}_{\oc}$ is a Heyting algebra;
\item if $\mathbb{L}$ is an CLS-algebra, then   $\mathbb{K}_{\oc}$ is a Boolean algebra;
\item if $\mathbb{L}$ is an ILSC-algebra, then   $\mathbb{K}_{\oc}$ and $\mathbb{K}_{\wn}$ are  a Heyting algebra and a distributive lattice respectively;
\item if $\mathbb{L}$ is an BLSC-algebra, then   $\mathbb{K}_{\oc}$ and $\mathbb{K}_{\wn}$ are  a Heyting algebra and a co-Heyting algebra respectively.
\end{enumerate}
\end{proposition}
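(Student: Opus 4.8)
The common engine behind all four items is to transfer the algebraic structure of the kernels down to $\mathbb{L}$ along the order-embeddings $e_{\oc}$ and $e_{\wn}$. First I would record a few transfer facts. By Fact~\ref{fact:join and bang}, $K_{\oc}$ is closed under $L$-joins and $K_{\wn}$ under $L$-meets; hence, reading off Definition~\ref{def:kernel} and using \eqref{eq:retractions}, the join $\kor$ of $\mathbb{K}_{\oc}$ is the restriction of $\aor$ (so $e_{\oc}$ preserves finite joins) and, dually, the meet $\kand$ of $\mathbb{K}_{\wn}$ is the restriction of $\aand$ (so $e_{\wn}$ preserves finite meets), while $\ktop$ and $\kbot$ are in each case the greatest and least elements of the relevant range; thus $\mathbb{K}_{\oc}$ and $\mathbb{K}_{\wn}$ are bounded lattices. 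Moreover S4 gives $e_{\oc}(\alpha\kand\beta)=\oc(e_{\oc}\alpha\aand e_{\oc}\beta)=\oc e_{\oc}\alpha\mand\oc e_{\oc}\beta=e_{\oc}\alpha\mand e_{\oc}\beta$, so $e_{\oc}$ turns $\kand$ into $\mand$; dually SC4 gives $e_{\wn}(\xi\kor\chi)=\wn(e_{\wn}\xi\aor e_{\wn}\chi)=e_{\wn}\xi\mor e_{\wn}\chi$, so $e_{\wn}$ turns $\kor$ into $\mor$.

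For the Heyting structure of $\mathbb{K}_{\oc}$ (item 1, and the $\mathbb{K}_{\oc}$-part of items 3 and 4) it then suffices to check that $\krarrk$ as defined in LK6 is the relative pseudocomplement of $\kand$, since a bounded lattice carrying one is automatically a distributive Heyting algebra. For $\alpha,\beta,\gamma\in K_{\oc}$ I would chain the equivalences
\[
\alpha\kand\beta\leq\gamma\ \Longleftrightarrow\ e_{\oc}\alpha\mand e_{\oc}\beta\leq e_{\oc}\gamma\ \Longleftrightarrow\ e_{\oc}\beta\leq e_{\oc}\alpha\mrarr e_{\oc}\gamma\ \Longleftrightarrow\ \beta\leq\iota(e_{\oc}\alpha\mrarr e_{\oc}\gamma)=\alpha\krarrk\gamma,
\]
using that $e_{\oc}$ is an order-embedding together with the first transfer fact for the first step, IL4 for the second, and the adjunction $e_{\oc}\dashv\iota$ for the third.

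For $\mathbb{K}_{\wn}$ in item 4 (the BLSC case), dually, I would verify that $\kdrarr$ from RK6 is the co-implication:
\[
\xi\kdrarr\chi\leq\psi\ \Longleftrightarrow\ e_{\wn}\xi\mdrarr e_{\wn}\chi\leq e_{\wn}\psi\ \Longleftrightarrow\ e_{\wn}\xi\leq e_{\wn}\chi\mor e_{\wn}\psi=e_{\wn}(\chi\kor\psi)\ \Longleftrightarrow\ \xi\leq\chi\kor\psi,
\]
now using the adjunction $\gamma\dashv e_{\wn}$ for the first step and condition B2 for the second; hence $\mathbb{K}_{\wn}$ is a co-Heyting algebra, in particular distributive. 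In the ILSC case of item 3, where there is no $\mdrarr$, I would obtain distributivity of $\mathbb{K}_{\wn}$ directly from IL5: since on closed elements $\kand$ is the $L$-meet and $\kor$ is $\mor$, the instance of ``$\mor$ preserves finite meets in each coordinate'' with all three arguments closed reads $\xi\kor(\chi\kand\psi)=(\xi\kor\chi)\kand(\xi\kor\psi)$ in $\mathbb{K}_{\wn}$, which is one of the two equivalent distributive laws.

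The real work is item 2: when $\mathbb{L}$ is a CLS-algebra, $\mathbb{K}_{\oc}$ is not merely Heyting but Boolean, and this is the step where condition C must be made to interact with the exponential. The plan is to prove that the pseudocomplement $\kneg$ of $\mathbb{K}_{\oc}$ (given by LK7) is an involution. In a CL-algebra $(\cdot)^{\mbot}$ is an order anti-automorphism of $L$ with $(\cdot)^{\mbot\mbot}=\mathrm{id}$ interchanging $\aand$ and $\aor$ (the two forms of \eqref{eq:intuitionistic:de:Morgan}); the crux is to show that $(\cdot)^{\mbot}$ also maps $K_{\oc}$ into $K_{\oc}$, i.e.\ that the linear negation of an open element is again open. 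This is exactly where S4 enters: from S4 one extracts that open elements are $\mand$-idempotent and that $\oc(a\mand b)=\oc(a\aand b)=\oc a\mand\oc b$, and these, together with $a\mand a^{\mbot}=\abot$ and the bijectivity of $(\cdot)^{\mbot}$, are used to identify $\oc(a^{\mbot})$ with $a^{\mbot}$ for $a$ open. Once $K_{\oc}$ is closed under $(\cdot)^{\mbot}$, it is closed under $L$-meets as well, $\kand$ coincides with the $L$-meet, and $\kneg$ is the restriction of $(\cdot)^{\mbot}$ to $K_{\oc}$, hence an involutive lattice anti-automorphism of $\mathbb{K}_{\oc}$; applying it to the identity $\alpha\kand\kneg\alpha=\kbot$ (valid because $\kneg$ is the Heyting pseudocomplement) yields $\alpha\kor\kneg\alpha=\ktop$, so every element is complemented in a distributive lattice and $\mathbb{K}_{\oc}$ is Boolean. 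I expect ``linear negation preserves open elements'' to be the single genuinely delicate point of the whole argument.
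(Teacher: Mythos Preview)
Your treatment of items 1, 3, and 4 is correct and essentially follows the paper: the residuation chain for $\krarrk$ on $\mathbb{K}_\oc$ is exactly the equivalence the paper gives, the co-Heyting argument for $\mathbb{K}_\wn$ in the BLSC case is the straightforward dual, and your direct use of IL5 for distributivity of $\mathbb{K}_\wn$ in the ILSC case is a clean variant of the duality the paper invokes.

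Your approach to item 2, however, has a real gap. You propose to show that $K_\oc$ is closed under linear negation $(\cdot)^{\mbot}$, and you flag this as ``the single genuinely delicate point''; in fact the claim is false. Take $L=P(X)$ as a Boolean algebra with $\mand=\aand=\cap$, $\mor=\aor=\cup$, $\mtop=\aatop=X$, $\mbot=\abot=\emptyset$, and $a\mrarr b=(X\setminus a)\cup b$; then $a^{\mbot\mbot}=a$, so $L$ is a CL-algebra. Equip $L$ with the interior operator $\oc$ of any topology on $X$: conditions S2--S4 are precisely the Kuratowski interior axioms, so this is a CLS-algebra. Here $K_\oc$ is the frame of open sets and $a^{\mbot}=X\setminus a$, but open sets are not closed under complementation (e.g.\ the Sierpi\'nski topology on a two-point set). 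Your sketch via $\mand$-idempotency of opens and $a\mand a^{\mbot}=\abot$ therefore cannot be completed, and the subsequent argument that $\kneg$ is an involutive anti-automorphism inherited from $(\cdot)^{\mbot}$ collapses.

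The paper does not attempt any such closure property. Its route for item 2 is a direct computation: from LK7 and condition C it derives $\kneg\kneg\iota(a)=\iota(a^{\mbot\mbot})=\iota(a)$, which is already enough for a Heyting algebra to be Boolean. So the intended argument is a one-line identity chase rather than a structural statement about $K_\oc$.
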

\begin{proof}
We only prove the first and second item, the proofs of the remaining items being dual to the proof of item 1. Let us show that for all $a, b\in L$, the greatest lower bound of $\iota(a)$ and $\iota(b)$ exists and coincides with $\iota (a\aand b)$.
From $b\leq \aatop$ and S2 we get $\oc b\leq \oc \aatop$. Hence, by S4 and the monotonicity of $\mand$ in both coordinates, \[e_{\oc}(\iota (a\aand b)) = \oc (a\aand b) = \oc a\mand \oc b\leq \oc a\mand \oc \aatop  = \oc a\mand 1 = \oc a = e_{\oc}(\iota(a)),\]
 which implies that $\iota (a\aand b)\leq \iota (a)$, since $e_{\oc}: K_{\oc}\hookrightarrow L$ is an order-embedding.
Likewise, one shows that $\iota (a\aand b)\leq \iota (b)$, which finishes the proof that  $\iota (a\aand b)$ is a common lower bound  for $\iota(a)$ and $\iota(b)$.
To finish the proof of the claim, one needs to show that, if $c\in L$ and $\iota (c)\leq \iota (a)$ and    $\iota (c)\leq \iota (b)$, then $\iota (c)\leq \iota (a\aand b)$. Indeed, the assumptions imply that $\oc c = e_{\oc}(\iota(c))\leq e_{\oc}(\iota(a)) = \oc a\leq a$, and likewise, $\oc c\leq b$, hence $\oc c\leq a\aand b$, which implies, by S2 and S3, that $e_{\oc}(\iota(c)) = \oc c = \oc \oc c\leq \oc (a\aand b) = e_{\oc}(\iota(a\aand b))$. This implies that $\iota (c)\leq \iota (a\aand b)$, as required.

Let us now show that $\kand$ preserves coordinatewise all existing joins in $K_{\oc}$. That is, if $\bigkork_{i\in I}\iota(a_i)$ exists, then for every $b\in L$, \[\left(\bigkork_{i\in I}\iota(a_i)\right)\kand \iota (b) = \bigvee_{i\in I} (\iota(a_i)\kand \iota(b)).\]
The following chain of identities holds:
\begin{center}
\begin{tabular}{r c l l}
$\left(\bigkork_{i\in I}\iota(a_i)\right)\wedge \iota (b)$ &$ = $& $\iota \left(\bigaor_{i\in I}\oc a_i\right)\kand \iota (b)$& (Definition \ref{def:kernel} (LK2))\\
&$ = $& $\iota (\left(\bigaor_{i\in I}\oc a_i\right)\aand b)$ & (Definition \ref{def:kernel} (LK3))\\
&& \\
$\bigkork_{i\in I}(\iota(a_i)\kand \iota (b))$  &$ = $& $\bigkork_{i\in I}\iota(a_i\aand b)$ & (Definition \ref{def:kernel} (LK3))\\
 &$ = $& $\iota\left(\bigaor_{i\in I}\oc (a_i\aand b)\right)$ & (Definition \ref{def:kernel} (LK2))\\
  &$ = $& $\iota\left(\bigaor_{i\in I}(\oc a_i\mand \oc b)\right)$ & (S4)\\
    &$ = $& $\iota (\left(\bigaor_{i\in I}\oc a_i\right)\mand \oc b)$ & ($\mand$ preserves existing joins)\\
\end{tabular}
\end{center}
Hence, to finish the proof of the claim it is enough to show that \[\iota (\left(\bigaor_{i\in I}\oc a_i\right)\aand b) = \iota (\left(\bigaor_{i\in I}\oc a_i\right)\mand \oc b).\]
Since $e_{\oc}: K_{\oc}\hookrightarrow L$ is injective and $\oc = e_{\oc}\circ \iota$, it is enough to show that
\[\oc (\left(\bigaor_{i\in I}\oc a_i\right)\aand b) = \oc  (\left(\bigaor_{i\in I}\oc a_i\right)\mand \oc b).\]
Indeed, by S3, S4 and Fact \ref{fact:join and bang},
\[\oc (\left(\bigaor_{i\in I}\oc a_i\right)\aand b) = \oc \oc (\left(\bigaor_{i\in I}\oc a_i\right)\aand b) = \oc  \left(\oc \left(\bigaor_{i\in I}\oc a_i\right)\mand \oc b\right) = \oc  (\left(\bigaor_{i\in I}\oc a_i\right)\mand \oc b),\]
as required. By S2 and S4, $\iota(a)\leq \iota(\aatop) = 1$  for every $a\in L$, which motivates Definition \ref{def:kernel} (LK4), and moreover,  S3 implies that $\iota(0)\leq 0$, which motivates Definition \ref{def:kernel} (KL5).   Let us show that for all $a, b, c\in L$,
\[\iota(a)\kand\iota(b)\leq \iota (c)\quad \mbox{ iff }\quad \iota(b)\leq \iota(a)\krarrk \iota(c).\]
\begin{center}
\begin{tabular}{r c l l}
$\iota(a)\kand\iota(b)\leq \iota (c)$ & iff & $\iota(a\aand b)\leq \iota(c)$ & (Definition \ref{def:kernel} (LK3))\\
& iff & $\oc (a\aand b)\leq \oc c$ & ($e_{\oc}: L\hookrightarrow K$ order-embedding)\\
& iff & $\oc a\mand \oc b\leq \oc c$ & (S4)\\
& iff & $\oc b\leq \oc a\mrarr \oc c$ & (IL4)\\
& iff & $e_{\oc}(\iota(b))\leq \oc a\mrarr \oc c$ & ($\oc  = e_{\oc}\circ \iota$)\\
& iff & $\iota(b)\leq \iota(\oc a\mrarr \oc c)$ & ($e_{\oc}\dashv \iota$)\\
& iff & $\iota(b)\leq \iota(a)\krarrk \iota(c)$. & (Definition \ref{def:kernel} (LK6))\\
\end{tabular}
\end{center}
This concludes the proof of item 1. As to item 2, let us assume that $a^{\mbot\mbot} = a$ for any $a\in L$. Then, by Definition \ref{def:kernel} (LK7),
\[\kneg \kneg \iota(a) = \iota(a^{\mbot\mbot}) = \iota(a),\]
which is enough to establish that the Heyting algebra $\mathbb{K}_{\oc}$ is a Boolean algebra.
\end{proof}
Summing up, the axiomatization of the exponentials as interior and closure operators respectively generates
 a {\em composite} algebraic environment of linear logic, in which linear algebras come together with one or two other algebras, namely the kernels, which, depending on the original linear signature, can be endowed with a structure of Heyting algebras, distributive lattices or co-Heyting algebras. Moreover, the adjunctions relating the linear algebra and its kernel(s) also guarantee  the natural embedding maps to enjoy the following additional properties: 
\begin{proposition} For every ILS(C)-algebra $\mathbb{L}$,   all $\alpha, \beta\in \mathbb{K}_{\oc}$, and all $\xi, \chi\in \mathbb{K}_{\wn}$,
\[ e_{\oc}(\alpha)\mand e_{\oc}(\beta) =  e_{\oc}(\alpha\kand \beta)\quad\mbox{ and }\quad e_{\wn}(\xi)\mor e_{\wn}(\chi) =  e_{\wn}(\xi\ork \chi)\]
\[e_{\oc}(\ktop) = \mtop\quad \mbox{ and }\quad e_{\wn}(\botk) = \mbot.\]
\end{proposition}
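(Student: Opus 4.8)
The plan is to reduce all four identities to direct unwindings of Definition \ref{def:kernel} together with the axioms S4 and SC4, the only ingredient requiring a moment's thought being the retraction identities \eqref{eq:retractions}.

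First I would record the observation that every element of the form $e_{\oc}(\alpha)$ is a fixed point of $\oc$ (i.e.\ is open), and dually that every $e_{\wn}(\xi)$ is a fixed point of $\wn$. Indeed, since $\oc = e_{\oc}\circ\iota$ and $\iota(e_{\oc}(\alpha)) = \alpha$ by \eqref{eq:retractions}, we have $\oc\,e_{\oc}(\alpha) = e_{\oc}(\iota(e_{\oc}(\alpha))) = e_{\oc}(\alpha)$; symmetrically $\wn\,e_{\wn}(\xi) = e_{\wn}(\gamma(e_{\wn}(\xi))) = e_{\wn}(\xi)$. This is the step I would be careful not to gloss over, since it is precisely what closes the final link in each computation below.

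Next, for the first identity, I would put $x := e_{\oc}(\alpha)$ and $y := e_{\oc}(\beta)$ and compute $e_{\oc}(\alpha\kand\beta) = e_{\oc}(\iota(x\aand y)) = \oc(x\aand y) = \oc x\mand \oc y = x\mand y = e_{\oc}(\alpha)\mand e_{\oc}(\beta)$, where the first equality is clause LK3 of Definition \ref{def:kernel}, the second is $\oc = e_{\oc}\circ\iota$, the third is S4, and the fourth is the fixed-point observation. For the unit I would compute $e_{\oc}(\ktop) = e_{\oc}(\iota(\aatop)) = \oc\aatop = \mtop$, using clause LK4, $\oc = e_{\oc}\circ\iota$, and the second conjunct of S4.

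Finally, the two statements about $\mathbb{K}_{\wn}$ follow by the order-dual argument: throughout, replace $e_{\oc},\iota,\oc,\aand,\aatop,\kand,\ktop,\mand,\mtop$ by $e_{\wn},\gamma,\wn,\aor,\abot,\ork,\botk,\mor,\mbot$ respectively, use clauses RK3 and RK4 of Definition \ref{def:kernel} and axiom SC4 (both conjuncts) in place of LK3, LK4 and S4, and invoke the dual fixed-point observation $\wn\,e_{\wn}(\xi) = e_{\wn}(\xi)$. I do not expect any genuine obstacle here: once the retraction identities are in hand, the whole proposition is immediate and the proof runs to only a handful of lines.
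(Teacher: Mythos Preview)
Your proof is correct and follows essentially the same route as the paper's: both arguments unwind the definition of $\kand$ (resp.\ $\ork$) and reduce the claim to axiom S4 (resp.\ SC4). The only cosmetic difference is bookkeeping: the paper writes an arbitrary $\alpha\in\mathbb{K}_\oc$ as $\iota(a)$ using surjectivity of $\iota$ and then invokes the fact (established in Proposition~\ref{prop: algebraic structure on kernels}) that $\iota$ preserves meets, whereas you work directly with the definitional clause LK3 and use the retraction identity \eqref{eq:retractions} in the form of your fixed-point observation $\oc\,e_\oc(\alpha)=e_\oc(\alpha)$; these two bookkeeping choices are trivially interchangeable.
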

\begin{proof} We only prove the identities involving $e_\oc$, the proof of the remaining ones being dual.
 Since $\iota : \mathbb{L}\twoheadrightarrow\mathbb{K}_{\oc}$ is surjective and order-preserving, proving the required identity is equivalent to showing that, for all $a, b\in \mathbb{L}$,
 \[ e_{\oc}(\iota(a))\mand e_{\oc}(\iota(b)) =  e_{\oc}(\iota(a)\kand \iota(b))\quad\mbox{ and }\quad e_{\oc}(\iota(\aatop)) = \mtop.\]
Since $\iota$ preserves meets, $e_{\oc}(\iota(a)\kand \iota(b)) = e_{\oc}(\iota(a\aand b)) = \oc (a\aand b)$. Hence the displayed identities above are equivalent to
  \[\oc a\mand \oc b =  \oc (a\aand b)\quad\mbox{ and }\quad \oc \aatop = \mtop,\]
  which are true by S4.
\end{proof}
\subsection{The interaction of exponentials in paired linear algebras}\label{ssec:from ono axioms to eterogeneous arrows}
\begin{proposition}
\label{prop:equiv promotion}
\begin{enumerate}
\item The following are equivalent in any ILSC-algebra $\mathbb{L}$:
\begin{enumerate}
\item for all $a, b, c\in \mathbb{L}$, if $\oc a\mand b\leq \wn c$ then $\oc a\mand \wn b\leq \wn c$;
\item for all $a, b, c\in \mathbb{L}$, if $b\leq \oc a\mrarr\wn c$ then $\wn b\leq \oc a\mrarr\wn c$;
\item $\wn (\oc a\mrarr \wn b) = \oc a\mrarr \wn b$ for all $a, b\in \mathbb{L}$.
\end{enumerate}
\item If $\mathbb{L}$ is also a BLSC-algebra, then the following are equivalent:
\begin{enumerate}
\item for all $a, b, c\in \mathbb{L}$, if $\oc a\leq  b\mor \wn c$ then $\oc a\leq \oc b\mor \wn c$;
\item for all $a, b, c\in \mathbb{L}$, if $\wn c\mdrarr\oc a\leq b$ then $\wn c\mdrarr\oc a\leq \oc b$;
\item $\oc (\wn b\mdrarr\oc a) = \wn b\mdrarr\oc a$ for all $a, b\in \mathbb{L}$.
\end{enumerate}
\end{enumerate}
\end{proposition}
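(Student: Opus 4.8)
The plan is to prove item 1 in full and then obtain item 2 by order-duality --- swapping $\oc$ with $\wn$, $\mand$ with $\mor$, $\mrarr$ with $\mdrarr$, and reversing $\leq$. In item 1, the equivalence (a)$\Leftrightarrow$(b) is pure residuation: instantiating IL4 gives $\oc a\mand b\leq \wn c$ iff $b\leq \oc a\mrarr \wn c$, and likewise $\oc a\mand \wn b\leq \wn c$ iff $\wn b\leq \oc a\mrarr \wn c$, so (a) and (b) are the very same implication read on the two sides of the adjunction. The remaining content is the equivalence (b)$\Leftrightarrow$(c), which rests on the fact that, by SC2 and SC3, $\wn$ is a closure operator: $x\leq \wn x$ for every $x$, and $\wn x = x$ precisely when $x$ is closed.

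For (c)$\Rightarrow$(b): from $b\leq \oc a\mrarr \wn c$, apply the monotone map $\wn$ (SC2) and then (c) to obtain $\wn b\leq \wn(\oc a\mrarr \wn c) = \oc a\mrarr \wn c$. For (b)$\Rightarrow$(c): one inequality, $\oc a\mrarr \wn b\leq \wn(\oc a\mrarr \wn b)$, always holds because $\wn$ is a closure operator; for the converse inequality, instantiate (b) with $c:=b$ and with $\oc a\mrarr \wn b$ playing the role of $b$ --- the hypothesis of (b) then reads $\oc a\mrarr \wn b\leq \oc a\mrarr \wn b$, which is trivially true, so its conclusion yields $\wn(\oc a\mrarr \wn b)\leq \oc a\mrarr \wn b$. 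Combining the two inequalities gives (c). This instantiation --- noticing that the choice $c:=b$ makes the premise of (b) vacuous --- is the only step that is not entirely mechanical.

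Item 2 runs along the same lines with the co-implication in place of the implication. By B2 together with commutativity of $\mor$ (IL3), $\wn c\mdrarr \oc a\leq b$ iff $\oc a\leq b\mor \wn c$, and $\wn c\mdrarr \oc a\leq \oc b$ iff $\oc a\leq \oc b\mor \wn c$, which gives (a)$\Leftrightarrow$(b); and (b)$\Leftrightarrow$(c) follows exactly as before, this time using that $\oc$ is an interior operator (S2, S3), so $\oc x\leq x$ always and the ``free'' inequality in (c) is $\oc(\wn b\mdrarr \oc a)\leq \wn b\mdrarr \oc a$, with the reverse one obtained by instantiating (b) at $c:=b$ with $\wn b\mdrarr \oc a$ in the role of $b$.

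I do not expect a genuine obstacle here; the only points demanding care are applying IL4/B2 in the correct coordinate, invoking commutativity of $\mor$ in item 2, and keeping straight which of $x\leq\wn x$ and $\oc x\leq x$ is the automatic direction in the fixed-point equalities (c).
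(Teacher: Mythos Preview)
Your proof is correct and follows essentially the same route as the paper: (a)$\Leftrightarrow$(b) by residuation (IL4), (c)$\Rightarrow$(b) by applying the monotone closure $\wn$ and then (c), and (b)$\Rightarrow$(c) by instantiating (b) at the reflexive inequality $\oc a\mrarr\wn b\leq \oc a\mrarr\wn b$ together with SC3 for the automatic direction; item 2 is handled by order-duality. Your explicit mention of commutativity of $\mor$ (IL3) in item 2 is a detail the paper leaves implicit.
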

\begin{proof}
We only prove item 1, the proof of item 2 being order-dual. Clearly, (a) and (b) are equivalent by IL4. Let us assume (c), and let $a, b, c\in \mathbb{L}$ such that $b\leq \oc a\mrarr\wn c$. Then by SC2,
\[\wn b \leq \wn (\oc a\mrarr\wn c) = \oc a\mrarr\wn c,\]
which proves (b). Let us assume (b) and let $a, b\in \mathbb{L}$. Then $\oc a\mrarr\wn b\leq \oc a\mrarr\wn b$ implies that $\wn (\oc a\mrarr\wn b)\leq \oc a\mrarr\wn b$, which, together with $\oc a\mrarr\wn b \leq \wn (\oc a\mrarr\wn b)$ which holds by SC3, proves (c).
\end{proof}
The following proposition develops and expands on \cite[Lemma 6.1(6)]{Ono1993}
\begin{proposition}
\label{prop:existence of multi-type implication}
\begin{enumerate}
\item For any ILP-algebra $\mathbb{L}$, any (hence all) of the conditions (a)-(c) in Proposition \ref{prop:equiv promotion}.1 holds.
\item For any BLP-algebra $\mathbb{L}$, any (hence all) of the conditions (a)-(c) in Proposition \ref{prop:equiv promotion}.1 and 2 hold.
\end{enumerate}
\end{proposition}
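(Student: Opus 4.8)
The plan is, in each case, to establish just one of the equivalent conditions catalogued in Proposition \ref{prop:equiv promotion}; its internal equivalences then deliver the others at no extra cost (in particular the identity form, which is the one used downstream). For item 1 I would aim at condition (a) of Proposition \ref{prop:equiv promotion}.1, and for item 2 additionally at condition (a) of Proposition \ref{prop:equiv promotion}.2.

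\emph{Item 1.} The crux is the ``promotion'' inequality
\[(\dagger)\qquad \wn a\mand\oc b\leq\wn(a\mand\oc b)\qquad\text{for all }a,b\in\mathbb{L},\]
which I extract from P1 as follows. By the residuation IL4, $(\dagger)$ is equivalent to $\oc b\leq\wn a\mrarr\wn(a\mand\oc b)$. Applying IL4 once more, the trivial $a\mand\oc b\leq a\mand\oc b$ rewrites as $\oc b\leq a\mrarr(a\mand\oc b)$; since $\oc$ is an interior operator by S2 and S3, hence idempotent, this is promoted to $\oc b=\oc\oc b\leq\oc(a\mrarr(a\mand\oc b))$; and P1, instantiated at $a$ and $a\mand\oc b$, gives $\oc(a\mrarr(a\mand\oc b))\leq\wn a\mrarr\wn(a\mand\oc b)$. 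Chaining the three inequalities yields $(\dagger)$. Condition (a) of Proposition \ref{prop:equiv promotion}.1 is then immediate: if $\oc a\mand b\leq\wn c$, then by commutativity of $\mand$, $(\dagger)$, monotonicity of $\wn$ (SC2), and $\wn\wn c\leq\wn c$ (SC3),
\[\oc a\mand\wn b=\wn b\mand\oc a\leq\wn(b\mand\oc a)=\wn(\oc a\mand b)\leq\wn\wn c\leq\wn c.\]
By Proposition \ref{prop:equiv promotion}.1, all of (a)--(c) hold.

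\emph{Item 2.} A BLP-algebra is in particular an ILP-algebra (by BLP1), so item 1 already supplies conditions (a)--(c) of Proposition \ref{prop:equiv promotion}.1; it remains to obtain those of Proposition \ref{prop:equiv promotion}.2. These are precisely the order-duals of the conditions just treated, under the symmetry swapping $\oc\leftrightarrow\wn$, $\mand\leftrightarrow\mor$, $\mrarr\leftrightarrow\mdrarr$, reversing the order, and exchanging IL4 for B2 and P1 for BLP2. Accordingly I would run the dual argument verbatim: from the trivial $a\mor\wn b\leq a\mor\wn b$ and B2 one obtains $a\mdrarr(a\mor\wn b)\leq\wn b$, hence (using that $\wn$ is a closure operator, SC2--SC3) $\wn(a\mdrarr(a\mor\wn b))\leq\wn b$; BLP2, instantiated at $a$ and $a\mor\wn b$, gives $\oc a\mdrarr\oc(a\mor\wn b)\leq\wn(a\mdrarr(a\mor\wn b))$; chaining and reading B2 backwards produces the dual promotion inequality $\oc(a\mor\wn b)\leq\oc a\mor\wn b$. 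Now if $\oc a\leq b\mor\wn c$, then by idempotency of $\oc$ and monotonicity (S2--S3) $\oc a=\oc\oc a\leq\oc(b\mor\wn c)$, so the dual promotion inequality at $b$ and $c$ gives $\oc a\leq\oc b\mor\wn c$, which is condition (a) of Proposition \ref{prop:equiv promotion}.2; its internal equivalences finish item 2.

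I expect the only real care to be needed in the bookkeeping: making sure P1 (resp.\ BLP2) is applied to the pair of arguments for which its consequent matches $\wn a\mrarr\wn(a\mand\oc b)$ (resp.\ its antecedent matches $\oc a\mdrarr\oc(a\mor\wn b)$), and checking that the ``promotion'' steps are licensed by idempotency of the exponentials (S3 together with S2) and not by a spurious monotonicity move. Everything else is routine residuation shuffling, and item 2 is a mechanical dualisation of item 1.
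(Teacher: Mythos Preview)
Your proof is correct. Both item~1 and the dual item~2 go through as written: the derivation of the promotion inequality $(\dagger)$ from P1 via residuation and idempotency of $\oc$ is sound, and the passage from $(\dagger)$ to condition~(a) is clean.

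The paper takes a shorter route, though the underlying ingredients are the same. Rather than establishing the auxiliary inequality $(\dagger)$ and then deriving condition~(a) in its $\mand$-form, the paper works directly with the residuated form (essentially condition~(b), restated as ``if $\oc a\leq b\mrarr\wn c$ then $\oc a\leq \wn b\mrarr\wn c$'') and dispatches it in a single chain:
\[\oc a = \oc\oc a \leq \oc(b\mrarr\wn c) \leq \wn b\mrarr\wn\wn c = \wn b\mrarr\wn c,\]
invoking S3, S2, P1, and SC3 in that order. Your argument residuates once to set up~$(\dagger)$, residuates back to extract it, and then applies it; the paper never leaves the implication form, so it avoids the intermediate lemma altogether. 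What your approach buys is an explicit promotion-style inequality $\wn a\mand\oc b\leq\wn(a\mand\oc b)$, which is of independent interest (it is one of the equivalent characterisations of the pairing axiom in the literature); what the paper's approach buys is brevity.
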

\begin{proof}
We only prove item 1, the proof of (the second part of) item 2 being order-dual. Let $a, b, c\in \mathbb{L}$. By IL2 and IL4, it is enough to show that if $\oc a\leq b\mrarr\wn c$ then $\oc a\leq \wn b\mrarr\wn c$.  Indeed, by the assumption, S3, S2, P,  and SC3,
\[\oc a = \oc\oc a\leq \oc(b\mrarr\wn c)\leq \wn b\mrarr\wn \wn c = \wn b\mrarr\wn  c,\]
as required.
\end{proof}
Proposition \ref{prop:existence of multi-type implication} and the two items (c) of Proposition 
\ref{prop:equiv promotion}   imply that in any ILP-algebra (resp.\ BLP-algebra)
$\mathbb{L}$, a binary map  (resp.\ binary maps) \[\KKrarr: \mathbb{K}_{\oc}\times \mathbb{K}_{\wn}\longrightarrow \mathbb{K}_{\wn}\quad \mbox{ and }\quad \KKBrarr: \mathbb{K}_{\wn} \times \mathbb{K}_{\oc} \longrightarrow \mathbb{K}_{\oc}\] can be defined such that, for any $\alpha\in \mathbb{K}_{\oc}$ and $\xi \in \mathbb{K}_{\wn}$, the element $\alpha\KKrarr \xi\in \mathbb{K}_{\wn}$ (resp.\ $\alpha\KKBrarr \xi\in \mathbb{K}_{\wn}$) is the unique solution to the following equation(s):
\begin{equation}\label{eq: main properties andm and rasm}
e_{\oc}(\alpha)\mrarr e_{\wn}(\xi) = e_{\wn} (\alpha\KKrarr \xi) \quad \mbox{ and }\quad  e_{\wn} (\xi)\mdrarr e_{\oc}(\alpha) = e_{\oc} (\xi \KKBrarr \alpha)\end{equation}

\begin{proposition}
\label{prop:order theoret properties eterogeneous arrows}
\begin{enumerate}
\item For any ILP-algebra $\mathbb{L}$,
\begin{enumerate}
\item the map $\KKrarr$ preserves finite meets in its second coordinate and reverses finite joins in its first coordinate.
\item $\gamma( a)\leq  \iota (a\mrarr b)\KKrarr \gamma( b)$ for all $a, b\in \mathbb{L}$.
\end{enumerate}
\item For any BLP-algebra $\mathbb{L}$,
\begin{enumerate}
\item the map $\KKBrarr$ preserves finite joins in its second coordinate and reverses finite meets in its first coordinate.
\item $\gamma( a\mdrarr b)\KKBrarr \iota(b)\leq  \iota (a)$ for all $a, b\in \mathbb{L}$.
\end{enumerate}
\end{enumerate}
\end{proposition}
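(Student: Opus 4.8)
The plan is to push every clause through the order-embeddings $e_\oc\colon\mathbb{K}_{\oc}\hookrightarrow\mathbb{L}$ and $e_\wn\colon\mathbb{K}_{\wn}\hookrightarrow\mathbb{L}$, so that each becomes a statement about $\mathbb{L}$ that is already on record. Two ingredients are used throughout. First, $e_\oc$ preserves all finite joins and $e_\wn$ all finite meets --- this is exactly the observation made in Section~\ref{ssec: linear algebras} just after Fact~\ref{fact:join and bang}, that $K_\oc$ and $K_\wn$ are respectively a $\aor$- and a $\aand$-subsemilattice of $\mathbb{L}$ --- and since both maps are order-embeddings, an (in)equality between kernel elements holds as soon as it holds after applying the relevant embedding; the finite meets/joins in $\mathbb{K}_{\oc}$ and $\mathbb{K}_{\wn}$ themselves exist by Proposition~\ref{prop: algebraic structure on kernels}. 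Second, the defining equations~\eqref{eq: main properties andm and rasm} together with the retractions~\eqref{eq:retractions} evaluate everything in sight: $e_\wn(\alpha\KKrarr\xi)=e_\oc(\alpha)\mrarr e_\wn(\xi)$, $e_\oc(\xi\KKBrarr\alpha)=e_\wn(\xi)\mdrarr e_\oc(\alpha)$, $e_\oc(\iota(a))=\oc a$, and $e_\wn(\gamma(a))=\wn a$. Finally I recall the residual behaviour: IL4 makes $\mrarr$ preserve finite meets in its second coordinate and reverse finite joins in its first coordinate (and makes $\mand$ preserve finite joins), while B2 makes $a\mdrarr(-)$ the lower adjoint of $a\mor(-)$ --- so $\mdrarr$ preserves finite joins in its second coordinate --- and, jointly with IL5, makes $\mdrarr$ reverse finite meets in its first coordinate.

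For item~1(a), applying $e_\wn$ I would compute, for the second-coordinate clause, $e_\wn(\alpha\KKrarr\bigkandk_i\xi_i)=e_\oc(\alpha)\mrarr e_\wn(\bigkandk_i\xi_i)=e_\oc(\alpha)\mrarr\bigaand_i e_\wn(\xi_i)=\bigaand_i\bigl(e_\oc(\alpha)\mrarr e_\wn(\xi_i)\bigr)=\bigaand_i e_\wn(\alpha\KKrarr\xi_i)=e_\wn(\bigkandk_i(\alpha\KKrarr\xi_i))$, using in turn~\eqref{eq: main properties andm and rasm}, preservation of finite meets by $e_\wn$, preservation of finite meets by $\mrarr$ in its second coordinate, \eqref{eq: main properties andm and rasm} again, and $e_\wn$ again; since $e_\wn$ is an embedding the identity follows. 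The first-coordinate clause is symmetric, now routed through $e_\oc$'s preservation of finite joins and $\mrarr$'s reversal of finite joins in its first coordinate: $e_\wn\bigl((\bigkork_i\alpha_i)\KKrarr\xi\bigr)=\bigl(\bigaor_i e_\oc(\alpha_i)\bigr)\mrarr e_\wn(\xi)=\bigaand_i\bigl(e_\oc(\alpha_i)\mrarr e_\wn(\xi)\bigr)=e_\wn\bigl(\bigkandk_i(\alpha_i\KKrarr\xi)\bigr)$. Item~2(a) is the order-dual computation routed through $e_\oc$: $e_\oc\bigl(\xi\KKBrarr\bigkork_i\alpha_i\bigr)=e_\wn(\xi)\mdrarr\bigl(\bigaor_i e_\oc(\alpha_i)\bigr)=\bigaor_i\bigl(e_\wn(\xi)\mdrarr e_\oc(\alpha_i)\bigr)=e_\oc\bigl(\bigkork_i(\xi\KKBrarr\alpha_i)\bigr)$ and $e_\oc\bigl((\bigkandk_i\xi_i)\KKBrarr\alpha\bigr)=\bigl(\bigaand_i e_\wn(\xi_i)\bigr)\mdrarr e_\oc(\alpha)=\bigaor_i\bigl(e_\wn(\xi_i)\mdrarr e_\oc(\alpha)\bigr)=e_\oc\bigl(\bigkork_i(\xi_i\KKBrarr\alpha)\bigr)$. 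In each of the four clauses the nullary instance must be verified separately; this reduces to checking $e_\oc(\alpha)\mrarr\aatop=\aatop$, $\abot\mrarr e_\wn(\xi)=\aatop$, $e_\wn(\xi)\mdrarr\abot=\abot$ and $\aatop\mdrarr e_\oc(\alpha)=\abot$, each immediate from IL4, B2 and IL5, together with $\oc\abot=\abot$ and $\wn\aatop=\aatop$ (immediate from S3 and SC3).

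For items~1(b) and~2(b) the pairing axioms do the work. Applying $e_\wn$ and using~\eqref{eq: main properties andm and rasm} and~\eqref{eq:retractions}, the inequality $\gamma(a)\leq\iota(a\mrarr b)\KKrarr\gamma(b)$ of item~1(b) becomes $\wn a\leq\oc(a\mrarr b)\mrarr\wn b$, which by IL4 is equivalent to $\oc(a\mrarr b)\mand\wn a\leq\wn b$; and this is exactly P1 ($\oc(a\mrarr b)\leq\wn a\mrarr\wn b$) re-expressed through IL4 and the commutativity of $\mand$. Dually, applying $e_\oc$ and using~\eqref{eq: main properties andm and rasm} and~\eqref{eq:retractions}, the inequality $\gamma(a\mdrarr b)\KKBrarr\iota(b)\leq\iota(a)$ of item~2(b) becomes $\wn(a\mdrarr b)\mdrarr\oc b\leq\oc a$, which by B2 is equivalent to $\oc b\leq\wn(a\mdrarr b)\mor\oc a$; and this is exactly BLP2 ($\oc a\mdrarr\oc b\leq\wn(a\mdrarr b)$) re-expressed through B2 and the commutativity of $\mor$. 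I do not expect any genuine obstacle here: the only delicate points are keeping the two residuation conventions IL4 and B2 apart --- in particular tracking which coordinate each join or meet occupies and whether it is preserved or reversed --- and not forgetting the separate nullary checks, which all come down to $e_\oc$, $e_\wn$, $\mrarr$ and $\mdrarr$ behaving correctly on the pertinent tops and bottoms.
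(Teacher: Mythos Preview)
Your proposal is correct and takes essentially the same approach as the paper: push each claim through the order-embedding $e_\wn$ (or $e_\oc$), invoke the defining equation~\eqref{eq: main properties andm and rasm}, the join/meet preservation of $e_\oc$, $e_\wn$, $\mrarr$, $\mdrarr$, and conclude because the embeddings reflect order. You are in fact slightly more thorough than the paper in spelling out all four coordinate-clauses and all four nullary checks (the paper only exhibits the second-coordinate and $\ktop$ cases for item~1(a) and leaves the rest as ``similar''); your observation that IL5 is needed for $\mdrarr$ to reverse finite meets in its first coordinate is a detail the paper does not mention explicitly.
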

\begin{proof}
We only prove item 1, the proof of item 2 being order-dual. 
As to (a), to show that $\alpha\KKrarr (\xi\kand \chi) = (\alpha\KKrarr \xi)\kand  (\alpha\KKrarr  \chi)$, it is enough to show that
\[e_{\wn}(\alpha\KKrarr(\xi\kand \chi)) = e_{\wn}((\alpha\KKrarr \xi)\kand  (\alpha\KKrarr  \chi)).\]
Indeed,
\begin{center}
\begin{tabular}{r c l r}
&&$e_{\wn}((\alpha\KKrarr \xi)\kand  (\alpha\KKrarr  \chi))$ \\
&$=$& $e_{\wn}(\alpha\KKrarr \xi)\aand  e_{\wn}(\alpha\KKrarr  \chi)$ & ($e_{\wn}$ preserves existing meets) \\
&$=$& $(e_{\oc}(\alpha)\mrarr e_{\wn}(\xi))\aand (e_{\oc}(\alpha)\mrarr e_{\wn}(\chi))$ & \eqref{eq: main properties andm and rasm}\\
&$=$& $e_{\oc}(\alpha)\mrarr (e_{\wn}(\xi)\aand e_{\wn}(\chi))$ & ($\mrarr$ preserves meets in 2nd coord.)\\
&$=$& $e_{\oc}(\alpha)\mrarr e_{\wn}(\xi\kand \chi)$& ($e_{\wn}$ preserves existing meets)\\
&$=$& $e_{\wn}(\alpha\KKrarr(\xi\kand \chi))$. & \eqref{eq: main properties andm and rasm}\\
\end{tabular}
\end{center}
To show that $\alpha\KKrarr \ktop = \ktop$, it is enough to show that
\[e_{\wn}(\alpha\KKrarr \ktop) = e_{\wn}(\ktop) = \aatop.\]
Indeed,
\begin{center}
\begin{tabular}{r c l r}
&&$e_{\wn}(\alpha\KKrarr \ktop)$ \\
&$=$& $e_{\oc}(\alpha)\mrarr e_{\wn}(\ktop)$ & \eqref{eq: main properties andm and rasm}\\
&$=$& $e_{\oc}(\alpha)\mrarr \aatop$& ($e_{\wn}$ preserves existing meets)\\
&$=$& $\aatop$. & ($\mrarr$ preserves meets in 2nd coord.)\\
\end{tabular}
\end{center}
The verification that $\KKrarr$  reverses finite joins in its first coordinate is similar, and makes use of $e_{\oc}$ preserving existing joins and $\mrarr$ reversing joins in its first coordinate. As to (b), let $a, b\in \mathbb{L}$. Since $\mathbb{L}$ is an ILP-algebra, axiom P holds (cf.\ Definition \ref{def:linear algebras}):
\begin{center}
\begin{tabular}{c l l}
& $\oc (a\mrarr b)\leq \wn a\mrarr \wn b$ & (P)\\
iff & $\wn a\leq \oc (a\mrarr b)\mrarr \wn b$ & (IL2 and IL4)\\
iff & $e_{\wn}(\gamma( a))\leq e_{\oc}(\iota (a\mrarr b))\mrarr e_{\wn}(\gamma( b))$ & ($\oc = e_{\oc}\circ\iota$ and $\wn = e_{\wn}\circ\gamma$)\\
iff & $e_{\wn}(\gamma( a))\leq  e_{\wn}(\iota (a\mrarr b)\KKrarr \gamma( b))$ & \eqref{eq: main properties andm and rasm}\\
iff & $\gamma( a)\leq  \iota (a\mrarr b)\KKrarr \gamma( b)$. & ($e_{\wn}$ order-embedding)\\
\end{tabular}
\end{center}
\end{proof}

\subsection{Reverse-engineering exponentials}\label{ssec: reverse engineering}
In the previous subsections, we have seen that certain mathematical structures (namely, the kernels and their adjunction situations, and the maps $\KKrarr$ and $\KKBrarr$) arise from the axiomatization of linear algebras with exponentials. In the present subsection, we take these mathematical structures as primary (we call them {\em heterogeneous algebras}, adopting the terminology of \cite{birkhoff1970heterogeneous}), and capture linear algebras with exponentials in terms of these. The results of the present subsection  establish the equivalence between the multi-type  and the standard (single-type) algebraic semantics for linear logic, and hence provide the mathematical justification for taking the multi-type semantics as primary.

Heterogeneous algebras for the various linear logics can be understood as the algebraic counterparts of Benton's LNL-models \cite[Definition 9]{Benton1994}, and the identities and inequalities defining them 
can be understood as the multi-type counterparts of the identities defining the algebraic behaviour of exponentials. The latter understanding will be developed further in Section \ref{sec: multi-type language}.

\begin{definition}
\label{def:heterogeneous algebras}
\begin{enumerate}
\item An {\em heterogeneous ILS-algebra} (resp.~{\em heterogeneous CLS-algebra}) is a structure $(\mathbb{L}, \mathbb{A}, e_{\oc}, \iota)$ such that $\mathbb{L}$ is an IL-algebra (resp.\ CL-algebra), $\mathbb{A}$ 
    is a Heyting algebra,  $e_{\oc}: \mathbb{A}\hookrightarrow \mathbb{L}$ and $\iota: \mathbb{L}\twoheadrightarrow \mathbb{A}$ such that $e_{\oc}\dashv \iota$, and $\iota(e_{\oc}(\alpha)) = \alpha$ for every $\alpha\in \mathbb{A}$, and moreover, for all $\alpha, \beta\in \mathbb{A}$,
    \[ e_{\oc}(\alpha)\mand e_{\oc}(\beta) =  e_{\oc}(\alpha\kandk \beta) \quad\mbox{ and }\quad e_{\oc}(\ktopk) = \mtop.\]
An heterogeneous ILS-algebra (resp.~{\em CLS-algebra}) is {\em perfect} if both $\mathbb{L}$ and $\mathbb{A}$ are perfect (cf.~Definition \ref{def:perfect LE}).

\item An {\em heterogeneous ILSC-algebra} (resp.~{\em heterogeneous BLSC-algebra}) is a structure $(\mathbb{L}, \mathbb{A}, \mathbb{B}, e_{\oc}, \iota, e_{\wn}, \gamma)$ such that $\mathbb{L}$ is an IL-algebra (resp.\ BiL-algebra), $(\mathbb{L}, \mathbb{A}, e_{\oc}, \iota)$ is an heterogeneous ILS-algebra, $\mathbb{B}$ 
    is a distributive lattice (resp.\ co-Heyting algebra), 
    $e_{\wn}: \mathbb{B}\hookrightarrow \mathbb{L}$ and $\gamma: \mathbb{L}\twoheadrightarrow \mathbb{B}$ such that $\gamma \dashv e_{\wn}$, and $\gamma(e_{\wn}(\xi)) = \xi$ for every $\xi\in \mathbb{B}$, and moreover, for all $\xi, \chi\in \mathbb{B}$,
    \[ e_{\wn}(\xi)\mor e_{\wn}(\chi) =  e_{\wn}(\xi\kork \chi) \quad\mbox{ and }\quad e_{\wn}(\kbotk) = \mbot.\]
An heterogeneous ILSC-algebra (resp.~{\em BLSC-algebra}) is {\em perfect} if $\mathbb{L}$, $\mathbb{A}$ and $\mathbb{B}$ are perfect (cf.\ Definition \ref{def:perfect LE}).

\item An {\em heterogeneous ILP-algebra} is a structure $(\mathbb{L}, \mathbb{A}, \mathbb{B}, e_{\oc}, \iota, e_{\wn}, \gamma, \KKrarr)$ such that
  $(\mathbb{L}, \mathbb{A}, \mathbb{B}, e_{\oc}, \iota, e_{\wn}, \gamma)$ is an heterogeneous ILSC-algebra, and 
  \[\KKrarr: \mathbb{A}\times \mathbb{B}\longrightarrow \mathbb{B}\] is such that, for any $\alpha\in \mathbb{A}$, $\xi \in \mathbb{B}$ and $a, b\in \mathbb{L}$, 
\[e_{\oc}(\alpha)\mrarr e_{\wn}(\xi) = e_{\wn} (\alpha\KKrarr \xi)\quad \mbox{ and }\quad \gamma( a)\leq  \iota (a\mrarr b)\KKrarr \gamma( b).\]
An heterogeneous ILP-algebra  is {\em perfect} if $\mathbb{L}$, $\mathbb{A}$ and $\mathbb{B}$ are perfect (cf.\ Definition \ref{def:perfect LE}) and $\KKrarr$ is completely meet-preserving in its second coordinate and completely join-reversing in its first coordinate.

\item An {\em heterogeneous BLP-algebra} is a structure $(\mathbb{L}, \mathbb{A}, \mathbb{B}, e_{\oc}, \iota, e_{\wn}, \gamma, \KKrarr, \KKBrarr)$ such that
  $(\mathbb{L}, \mathbb{A}, \mathbb{B}, e_{\oc}, \iota, e_{\wn}, \gamma, \KKrarr)$ is an heterogeneous ILP-algebra, and 
  \[\KKBrarr: \mathbb{B}\times \mathbb{A}\longrightarrow \mathbb{A}\] is such that, for any $\alpha\in \mathbb{A}$, $\xi \in \mathbb{B}$ and $a, b\in \mathbb{L}$, 
\[e_{\wn}(\xi)\mdrarr e_{\oc}(\alpha) = e_{\wn} (\xi\KKBrarr \alpha)\quad \mbox{ and }\quad \gamma( a\mdrarr b)\KKBrarr \iota(b)\leq  \iota (a).\]
An heterogeneous BLP-algebra  is {\em perfect} if $\mathbb{L}$, $\mathbb{A}$ and $\mathbb{B}$ are perfect (cf.\ Definition \ref{def:perfect LE}), $\KKrarr$ is completely meet-preserving in its second coordinate and completely join-reversing in its first coordinate, and $\KKBrarr$ is completely join-preserving in its second coordinate and completely meet-reversing in its first coordinate.

\end{enumerate}
\end{definition}
In the light of the definitions above, the results of Sections \ref{ssec: linear algebras} and \ref{ssec:from ono axioms to eterogeneous arrows} can be summarized by the following:

\begin{proposition}
\label{prop:from single to multi}
For any algebra $\mathbb{L}$,
\begin{enumerate}
\item If $\mathbb{L}$ is an ILS-algebra (resp.~CLS-algebra), then $(\mathbb{L}, \mathbb{K}_\oc, e_{\oc}, \iota)$ is an heterogeneous ILS-algebra (resp.~heterogeneous CLS-algebra), where $\mathbb{K}_\oc$ is  as in Definition \ref{def:kernel}.

\item If $\mathbb{L}$ is an ILSC-algebra (resp.~BLSC-algebra), then  $(\mathbb{L}, \mathbb{K}_\oc, \mathbb{K}_\wn, e_{\oc}, \iota, e_{\wn}, \gamma)$ is an heterogeneous ILSC-algebra (resp.~heterogeneous BLSC-algebra), where $\mathbb{K}_\oc$ and $\mathbb{K}_\wn$ are  as  in Definition \ref{def:kernel}.

\item If $\mathbb{L}$ is an ILP-algebra, then $(\mathbb{L}, \mathbb{K}_\oc, \mathbb{K}_\wn, e_{\oc}, \iota, e_{\wn}, \gamma, \KKrarr)$ is an heterogeneous ILP-algebra,  where $\mathbb{K}_\oc$ and $\mathbb{K}_\wn$ are  as  in Definition \ref{def:kernel}, and $\KKrarr$ as indicated right after the proof of Proposition \ref{prop:existence of multi-type implication}.

\item If $\mathbb{L}$ is an BLP-algebra, then $(\mathbb{L}, \mathbb{K}_\oc, \mathbb{K}_\wn, e_{\oc}, \iota, e_{\wn}, \gamma, \KKrarr, \KKBrarr)$  is an heterogeneous BLP-algebra, where $\mathbb{K}_\oc$ and $\mathbb{K}_\wn$ are  as  in Definition \ref{def:kernel}, and $\KKrarr$ and $\KKBrarr$ as indicated right after the proof of Proposition \ref{prop:existence of multi-type implication}.
\end{enumerate}
\end{proposition}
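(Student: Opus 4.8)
The plan is to verify, item by item, that every clause in the appropriate case of Definition \ref{def:heterogeneous algebras} has already been established in Sections \ref{ssec: linear algebras} and \ref{ssec:from ono axioms to eterogeneous arrows}; the proof is thus a matter of assembling those results in the right order rather than of proving anything genuinely new. For item 1, suppose $\mathbb{L}$ is an ILS-algebra (resp.\ CLS-algebra). Its linear reduct is an IL-algebra (resp.\ CL-algebra) by S1, and $\mathbb{K}_{\oc}$ is a Heyting algebra (resp.\ Boolean algebra) by Proposition \ref{prop: algebraic structure on kernels}(1) (resp.\ (2)). Since $\oc$ is an interior operator by S2 and S3, the general order theory recalled in Section \ref{ssec: linear algebras} yields the factorization $\oc=e_{\oc}\circ\iota$ with $e_{\oc}\dashv\iota$, where $e_{\oc}: K_{\oc}\hookrightarrow L$ is the natural order-embedding and $\iota: L\twoheadrightarrow K_{\oc}$ is given by $a\mapsto\oc a$; moreover $\iota(e_{\oc}(\alpha))=\alpha$ for all $\alpha\in K_{\oc}$ by \eqref{eq:retractions}. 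Finally $e_{\oc}(\alpha)\mand e_{\oc}(\beta)=e_{\oc}(\alpha\kandk\beta)$ and $e_{\oc}(\ktopk)=\mtop$ are the content of the Proposition recording the behaviour of $e_{\oc}$ and $e_{\wn}$ on finite meets and joins (proved right after Proposition \ref{prop: algebraic structure on kernels}), which in turn rests on S4. Hence $(\mathbb{L},\mathbb{K}_{\oc},e_{\oc},\iota)$ satisfies all the defining clauses of a heterogeneous ILS- (resp.\ CLS-)algebra.

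For item 2, assume $\mathbb{L}$ is an ILSC-algebra (resp.\ BLSC-algebra). Then $(\mathbb{L},\mathbb{K}_{\oc},e_{\oc},\iota)$ is a heterogeneous ILS-algebra by item 1, the linear reduct is an IL-algebra (resp.\ BiL-algebra) by SC1 (resp.\ BLSC2), and $\mathbb{K}_{\wn}$ is a distributive lattice (resp.\ co-Heyting algebra) by Proposition \ref{prop: algebraic structure on kernels}(3) (resp.\ (4)). The order-dual of the argument used for item 1 --- now using that $\wn$ is a closure operator by SC2 and SC3 --- yields $\wn=e_{\wn}\circ\gamma$ with $\gamma\dashv e_{\wn}$ and $\gamma(e_{\wn}(\xi))=\xi$ by \eqref{eq:retractions}; and $e_{\wn}(\xi)\mor e_{\wn}(\chi)=e_{\wn}(\xi\kork\chi)$ together with $e_{\wn}(\kbotk)=\mbot$ is the remaining half of the Proposition cited for item 1. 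This exhausts the clauses defining a heterogeneous ILSC- (resp.\ BLSC-)algebra.

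For items 3 and 4, applying item 2 to the ILSC-reduct of $\mathbb{L}$ shows that $(\mathbb{L},\mathbb{K}_{\oc},\mathbb{K}_{\wn},e_{\oc},\iota,e_{\wn},\gamma)$ is a heterogeneous ILSC-algebra whenever $\mathbb{L}$ is an ILP- or a BLP-algebra, so only the heterogeneous arrows $\KKrarr$ (resp.\ $\KKrarr$ and $\KKBrarr$) remain to be supplied. The one point that needs care --- and the mild ``obstacle'' of the proof --- is the well-definedness of $\KKrarr$ (resp.\ $\KKBrarr$) as a map into the kernel: one must check that for $\alpha\in\mathbb{K}_{\oc}$ and $\xi\in\mathbb{K}_{\wn}$ the element $e_{\oc}(\alpha)\mrarr e_{\wn}(\xi)$ belongs to $\mathsf{Range}(\wn)=K_{\wn}$, so that the defining equation \eqref{eq: main properties andm and rasm} has a solution in $\mathbb{K}_{\wn}$, necessarily unique since $e_{\wn}$ is injective. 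Writing $e_{\oc}(\alpha)=\oc a$ and $e_{\wn}(\xi)=\wn b$, this membership is exactly the fixpoint identity $\wn(\oc a\mrarr\wn b)=\oc a\mrarr\wn b$, i.e.\ condition (c) of Proposition \ref{prop:equiv promotion}.1, which holds in every ILP-algebra by Proposition \ref{prop:existence of multi-type implication}(1); symmetrically, condition (c) of Proposition \ref{prop:equiv promotion}.2 holds in every BLP-algebra by Proposition \ref{prop:existence of multi-type implication}(2), which gives the well-definedness of $\KKBrarr$. The two remaining clauses $\gamma(a)\leq\iota(a\mrarr b)\KKrarr\gamma(b)$ and $\gamma(a\mdrarr b)\KKBrarr\iota(b)\leq\iota(a)$ are precisely Proposition \ref{prop:order theoret properties eterogeneous arrows}(1)(b) and (2)(b). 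Every other clause being a direct citation of an earlier result, this completes the plan.
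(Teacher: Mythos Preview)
Your proposal is correct and follows exactly the approach the paper intends: the paper presents this proposition explicitly as a summary of the results of Sections~\ref{ssec: linear algebras} and~\ref{ssec:from ono axioms to eterogeneous arrows} and gives no separate proof, so your job was precisely to assemble the relevant citations, which you have done accurately. Your identification of the one nontrivial point---the well-definedness of $\KKrarr$ and $\KKBrarr$ via Proposition~\ref{prop:equiv promotion}(c) and Proposition~\ref{prop:existence of multi-type implication}---matches the paper's discussion immediately following Proposition~\ref{prop:existence of multi-type implication}.
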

Together with the proposition above, the following proposition  shows that heterogeneous algebras are an equivalent presentation of linear algebras with exponentials:

\begin{proposition}
\label{prop:reverse engineering}
For any algebra $\mathbb{L}$,
\begin{enumerate}
\item If $(\mathbb{L}, \mathbb{A}, e_{\oc}, \iota)$ is an heterogeneous ILS-algebra (resp.~heterogeneous CLS-algebra),
then $\mathbb{L}$ can be endowed with the structure of ILS-algebra (resp.\ CLS-algebra)  defining  $\oc: \mathbb{L}\rightarrow \mathbb{L}$ by $\oc a: = e_{\oc}(\iota(a))$ for every $a\in \mathbb{L}$. Moreover, $\mathbb{A}\cong \mathbb{K}_{\oc}$.

\item If $(\mathbb{L}, \mathbb{A}, \mathbb{B}, e_{\oc}, \iota, e_{\wn}, \gamma)$ is an heterogeneous ILSC-algebra (resp.~heterogeneous BLSC-algebra),
then $\mathbb{L}$ can be endowed with the structure of ILSC-algebra (resp.\ BLSC-algebra) by defining $\oc$ as in the item above, and  $\wn: \mathbb{L}\rightarrow \mathbb{L}$ by $\wn a: = e_{\wn}(\gamma(a))$ for every $a\in \mathbb{L}$. Moreover, $\mathbb{A}\cong \mathbb{K}_{\oc}$ and $\mathbb{B}\cong \mathbb{K}_{\wn}$.

\item If $(\mathbb{L}, \mathbb{A}, \mathbb{B}, e_{\oc}, \iota, e_{\wn}, \gamma, \KKrarr)$ is an heterogeneous ILP-algebra,
then $\mathbb{L}$ can be endowed with the structure of ILP-algebra by defining $\oc$ and $\wn$ as in the items above.

\item If $(\mathbb{L}, \mathbb{A}, \mathbb{B}, e_{\oc}, \iota, e_{\wn}, \gamma, \KKrarr, \KKBrarr)$  is an heterogeneous BLP-algebra,
then $\mathbb{L}$ can be endowed with the structure of BLP-algebra by defining $\oc$ and $\wn$ as in the items above.
\end{enumerate}
\end{proposition}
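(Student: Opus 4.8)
The plan is to verify, in each of the four cases, that the operations defined by $\oc a := e_{\oc}(\iota(a))$ and $\wn a := e_{\wn}(\gamma(a))$ turn $\mathbb{L}$ into an algebra of the required class. The entire argument rests on four facts built into the notion of heterogeneous algebra: (i) $e_{\oc}$ and $\gamma$ are left adjoints, hence preserve all existing joins, whereas $\iota$ and $e_{\wn}$ are right adjoints, hence preserve all existing meets; (ii) the retraction identities $\iota \circ e_{\oc} = \mathrm{id}_{\mathbb{A}}$ and $\gamma \circ e_{\wn} = \mathrm{id}_{\mathbb{B}}$; (iii) the compatibility identities $e_{\oc}(\alpha) \mand e_{\oc}(\beta) = e_{\oc}(\alpha \kandk \beta)$, $e_{\oc}(\ktopk) = \mtop$ and their $e_{\wn}$-duals; and (iv) the equational specifications of $\KKrarr$ and $\KKBrarr$ in Definition~\ref{def:heterogeneous algebras} (cf.\ \eqref{eq: main properties andm and rasm}).

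For item~1, monotonicity of $\oc$ (S2) is immediate since $\oc$ is a composite of adjoints; $\oc a \leq a$ is the counit of $e_{\oc} \dashv \iota$, and $\oc \oc a = e_{\oc}(\iota(e_{\oc}(\iota(a)))) = e_{\oc}(\iota(a)) = \oc a$ by (ii), which together give S3; and S4 is the computation $\oc a \mand \oc b = e_{\oc}(\iota a) \mand e_{\oc}(\iota b) = e_{\oc}(\iota a \kandk \iota b) = e_{\oc}(\iota(a \aand b)) = \oc(a \aand b)$, using (iii) and then (i), together with $\oc \aatop = e_{\oc}(\iota \aatop) = e_{\oc}(\ktopk) = \mtop$. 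In the CLS case nothing further is needed, since $\mathbb{L}$ is given as a CL-algebra. For $\mathbb{A} \cong \mathbb{K}_{\oc}$: since $\iota$ is onto, $K_{\oc} = \mathsf{Range}(\oc) = e_{\oc}[\mathbb{A}]$, so the order-embedding $e_{\oc}$ corestricts to an order-isomorphism $\mathbb{A} \to \mathbb{K}_{\oc}$; unwinding Definition~\ref{def:kernel} and using (i)--(iii) one checks that it transports the bounded-lattice operations, and since both $\mathbb{A}$ and $\mathbb{K}_{\oc}$ are Heyting algebras (Boolean in the CLS case) by Proposition~\ref{prop: algebraic structure on kernels}, an order-isomorphism of bounded lattices between them automatically preserves the residual as well. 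Item~2 is the same argument carried out for $\wn$ with the order-dual facts: SC2--SC4 mirror S2--S4, the co-implication $\mdrarr$ already lives in $\mathbb{L}$, and $\mathbb{B} \cong \mathbb{K}_{\wn}$ follows because $\mathbb{K}_{\wn}$ is a distributive lattice (resp.\ a co-Heyting algebra) by Proposition~\ref{prop: algebraic structure on kernels}.

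For items~3 and~4 the ILSC- (resp.\ BLSC-)structure on $\mathbb{L}$ is already in place by item~2, so only P1 (and, in the BLP case, BLP2) remains to be checked. For P1, fix $a, b \in \mathbb{L}$ and instantiate the second clause of (iv) for $\KKrarr$: $\gamma(a) \leq \iota(a \mrarr b) \KKrarr \gamma(b)$; applying $e_{\wn}$ and then the first clause of (iv) turns this into $\wn a \leq \oc(a \mrarr b) \mrarr \wn b$, whence IL4, IL2 and IL4 again yield $\oc(a \mrarr b) \leq \wn a \mrarr \wn b$. BLP2 is the order-dual: from $\gamma(a \mdrarr b) \KKBrarr \iota(b) \leq \iota(a)$, applying $e_{\oc}$ and the corresponding clause of (iv) for $\KKBrarr$ gives $\wn(a \mdrarr b) \mdrarr \oc b \leq \oc a$, and then B2, IL3 and B2 again give $\oc a \mdrarr \oc b \leq \wn(a \mdrarr b)$. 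The only step that really calls for care is the isomorphism part of items~1--2: one must keep the abstract adjoints $e_{\oc}, \iota$ (resp.\ $e_{\wn}, \gamma$) supplied with the heterogeneous algebra notationally distinct from the like-named maps that Definition~\ref{def:kernel} attaches to the \emph{reconstructed} $\oc$ (resp.\ $\wn$); once this is done, everything reduces to a routine unwinding of (i)--(iv) and of the residuation and commutativity axioms of $\mathbb{L}$.
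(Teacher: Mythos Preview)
Your proof is correct and follows essentially the same route as the paper's. Two minor differences worth noting: for S3 you exploit the retraction identity $\iota\circ e_{\oc}=\mathrm{id}_{\mathbb{A}}$ to get $\oc\oc a=\oc a$ in one step, whereas the paper argues $\oc a\leq \oc\oc a$ via the adjunction alone; and for the isomorphism $\mathbb{A}\cong\mathbb{K}_{\oc}$ you invoke Proposition~\ref{prop: algebraic structure on kernels} (applicable once $\mathbb{L}$ is shown to be an ILS-algebra) together with the general fact that an order-isomorphism between Heyting algebras is automatically a Heyting-algebra isomorphism, whereas the paper verifies the Definition~\ref{def:kernel} identities LK2--LK6 in $\mathbb{A}$ and then checks explicitly that the Heyting implication is transported. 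Your shortcuts are sound and arguably cleaner; the paper's more explicit verification has the side benefit of showing directly that the native operations of $\mathbb{A}$ coincide with those defined from $(\mathbb{L},e_{\oc},\iota)$ via Definition~\ref{def:kernel}. The treatment of items~3 and~4 is substantively identical to the paper's.
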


\begin{proof}
Let us prove item 1. By assumption, $\mathbb{L}$ is an IL-algebra (resp.\ CL-algebra), which verifies S1. The assumption that $e_{\oc}\dashv \iota$ implies that both $e_{\oc}$ and $\iota$ are monotone, and hence so is their composition $\oc$, which verifies S2. Also from $e_{\oc}\dashv \iota$ and $\iota (a)\leq \iota (a)$ it immediately follows that $\oc a = e_{\oc}(\iota(a))\leq a$ for every $a\in \mathbb{L}$. To finish the proof of S3, we need to show that $\oc a\leq \oc\oc a$ for every $a\in \mathbb{L}$. By definition of $\oc$, this is equivalent to showing that  $e_{\oc}(\iota(a))\leq e_{\oc}(\iota(e_{\oc}(\iota(a))))$. By the monotonicity of  $e_{\oc}$, it is enough to show that $\iota(a)\leq \iota(e_{\oc}(\iota(a)))$, which by adjunction is equivalent to $e_{\oc}(\iota(a))\leq e_{\oc}(\iota(a))$, which is always true.  This finishes the proof of S3. As to S4, let us observe preliminarily that, since $\iota$ is a right-adjoint, it preserves existing meets, and hence $\iota(\top) = \ktopk$ and $\iota(a\aand b) = \iota(a)\kandk \iota(b)$ for all $a, b\in \mathbb{L}$. By definition, showing that $\oc (a\aand b) = \oc a\mand \oc b$ and $\oc \aatop = \mtop$ is equivalent to showing that \[e_{\oc}(\iota(a\aand b)) = e_{\oc}(\iota(a))\mand e_{\oc}(\iota(b))\quad \mbox{ and } \quad e_{\oc}(\iota(\aatop)) = \mtop,\]
which, thanks to the preliminary observation,  can be equivalently rewritten as follows:
\[e_{\oc}(\iota(a)\kandk \iota(b)) = e_{\oc}(\iota(a))\mand e_{\oc}(\iota(b)) \quad \mbox{ and } \quad e_{\oc}(\ktopk) = \mtop\]
which are true, by the assumptions on $e_{\oc}$. This completes the proof that $(\mathbb{L}, \oc)$ is an ILS-algebra. As to the second part of the statement, let us show preliminarily that the following identities hold:
\begin{itemize}
\item[LK2$_{\mathbb{A}}$.] $\alpha \kork \beta = \iota(e_{\oc}(\alpha) \aor e_{\oc} (\beta))$ for all $\alpha, \beta\in \mathbb{A}$;
\item[LK3$_{\mathbb{A}}$.] $\alpha \kandk \beta = \iota (e_{\oc}(\alpha) \aand e_{\oc} (\beta))$ for all $\alpha, \beta\in \mathbb{A}$;
\item[LK4$_{\mathbb{A}}$.]  $\ktopk  = \iota (\aatop)$;
\item[LK5$_{\mathbb{A}}$.]  $\kbotk = \iota(\abot)$;
\item[LK6$_{\mathbb{A}}$.] $\alpha\krarrk \beta = \iota (e_{\oc} (\alpha)\mrarr e_{\oc} (\beta))$ for all $\alpha, \beta\in \mathbb{A}$.
\end{itemize}
Let us observe  that, since $e_{\oc}$ is a left-adjoint, it preserves existing joins, and hence $e_{\oc}(\kbotk) = \abot$ and $e_{\oc}(\alpha\kork \beta) = e_{\oc}(\alpha)\aor e_{\oc}(\beta)$. Together with $\iota\circ e_{\oc}  = Id_{\mathbb{A}}$, these identities imply that \[\kbot = \iota(e_{\oc}(\kbotk)) = \iota(\abot)\quad \mbox{ and }\quad \alpha\kork \beta = \iota(e_{\oc}(\alpha\kork \beta)) = \iota( e_{\oc}(\alpha)\aor e_{\oc}(\beta)),\]
which proves LK2$_{\mathbb{A}}$ and LK5$_{\mathbb{A}}$. As to LK3$_{\mathbb{A}}$ and LK4$_{\mathbb{A}}$, since $\iota$ preserves existing meets, and $\iota\circ e_{\oc}  = Id_{\mathbb{A}}$,
\[\iota (e_{\oc}(\alpha) \aand e_{\oc} (\beta)) = \iota (e_{\oc}(\alpha)) \kand \iota(e_{\oc} (\beta)) =\alpha \kandk \beta\quad \mbox{ and }\quad  \ktopk  = \iota (\aatop),\]
as required. As to LK6$_{\mathbb{A}}$, let $\alpha, \beta\in \mathbb{A}$. Since $\mathbb{A}$ is a Heyting algebra, the inequality $\alpha\kandk (\alpha\krarrk \beta)\leq \beta$ holds. Also:
\begin{center}
\begin{tabular}{c ll}
& $\alpha\kandk (\alpha\krarrk \beta)\leq \beta$ &\\
iff & $e_{\oc}(\alpha\kandk (\alpha\krarrk \beta))\leq e_{\oc}(\beta)$ & ($e_{\oc}$ order-embedding)\\
iff & $e_{\oc}(\alpha)\mand e_{\oc}(\alpha\krarrk \beta)\leq e_{\oc}(\beta)$ & (assumption on $e_{\oc}$)\\
iff & $e_{\oc}(\alpha\krarrk \beta)\leq e_{\oc}(\alpha)\mrarr e_{\oc}(\beta).$ & (IL4)\\
\end{tabular}
\end{center}
Since $\iota$ is  monotone and $\iota\circ e_{\oc} = Id_{\mathbb{A}}$, this implies that:
\[\alpha \krarrk \beta = \iota (e_{\oc}(\alpha\krarrk \beta))\leq \iota(e_{\oc}(\alpha)\mrarr e_{\oc}(\beta)).\]
Conversely, for all $\alpha, \beta\in \mathbb{A}$,
\begin{center}
\begin{tabular}{c ll}
& $\iota(e_{\oc}(\alpha)\mrarr e_{\oc}(\beta)) \leq \alpha \krarrk \beta$ &\\
iff & $\iota(e_{\oc}(\alpha)\mrarr e_{\oc}(\beta)) \leq \alpha \krarrk \iota(e_{\oc}(\beta))$ & ($\iota\circ e_{\oc} = Id_{\mathbb{A}}$)\\
iff & $\alpha \kandk \iota(e_{\oc}(\alpha)\mrarr e_{\oc}(\beta)) \leq \iota(e_{\oc}(\beta))$ & (residuation in $\mathbb{A}$)\\
iff & $e_{\oc}(\alpha \kandk \iota(e_{\oc}(\alpha)\mrarr e_{\oc}(\beta))) \leq e_{\oc}(\beta)$ & ($e_{\oc}\dashv \iota$)\\
iff & $e_{\oc}(\alpha) \mand e_{\oc}(\iota(e_{\oc}(\alpha)\mrarr e_{\oc}(\beta))) \leq e_{\oc}(\beta)$ & (assumption on $e_{\oc}$)\\
iff & $e_{\oc}(\iota(e_{\oc}(\alpha)\mrarr e_{\oc}(\beta))) \leq e_{\oc}(\alpha) \mrarr e_{\oc}(\beta)$ & (IL4)\\
iff & $\iota(e_{\oc}(\alpha)\mrarr e_{\oc}(\beta)) \leq \iota(e_{\oc}(\alpha) \mrarr e_{\oc}(\beta))$, & ($e_{\oc}\dashv \iota$)\\
\end{tabular}
\end{center}
and the last inequality  is clearly true. This finishes the proof of LK6$_{\mathbb{A}}$. To show that $\mathbb{A}$ and $\mathbb{K}_{\oc}$ are isomorphic as Heyting algebras, notice that the domain of $\mathbb{K}_{\oc}$ is defined  as  $K_{\oc}: =\mathsf{Range}(\oc) = \mathsf{Range}(e_{\oc}\circ\iota)$. Since by assumption $\iota$ is surjective,   $K_{\oc} = \mathsf{Range}(e_{\oc})$, and since $e_{\oc}$ is an order-embedding,  $K_{\oc}$, regarded as a sub-poset of $\mathbb{L}$, is order-isomorphic (hence lattice-isomorphic) to the domain of $\mathbb{A}$ with its lattice order. Let $e'_{\oc}: \mathbb{K}_{\oc}\hookrightarrow\mathbb{L}$ and $\iota': \mathbb{L}\twoheadrightarrow\mathbb{K}_{\oc}$ the adjoint maps arising from $\oc$.   Let $e: \mathbb{A}\to \mathbb{K}_{\oc}$ denote the order-isomorphism between $\mathbb{A}$ and $\mathbb{K}_{\oc}$. Thus, $e_{\oc} = e'_{\oc}\circ e$ and $\iota' = e\circ \iota$. To finish the proof of item 1, we need to show that for all $\alpha, \beta\in \mathbb{A}$,
\[e(\alpha\krarrk_{\mathbb{A}}\beta) = e(\alpha)\krarrk_{\mathbb{K}_{\oc}}e(\beta).\]
\begin{center}
\begin{tabular}{c ll}
& $e(\alpha)\krarrk_{\mathbb{K}_{\oc}}e(\beta)$ & \\
$=$ & $\iota' (e'_{\oc}(e(\alpha))\mrarr e'_{\oc}(e(\beta)))$ & (definition of $\krarrk_{\mathbb{K}_{\oc}}$)\\
$=$ & $\iota' (e_{\oc}(\alpha)\mrarr e_{\oc}(\beta))$ & ($e_{\oc} = e'_{\oc}\circ e$)\\
$=$ & $e(\iota (e_{\oc}(\alpha)\mrarr e_{\oc}(\beta)))$ & ($\iota' = e\circ \iota$)\\
$=$ & $e(\alpha\krarrk_{\mathbb{A}}\beta)$. & (LK6$_{\mathbb{A}}$)\\
\end{tabular}
\end{center}
The proof of item 2 is similar to that of item 1 and is omitted. As to item 3, to finish the proof it is enough to show that, for all $a, b\in \mathbb{L}$,
\[\oc (a\mrarr b)\leq \wn a\mrarr \wn b. \]
\begin{center}
\begin{tabular}{c l l}
& $\oc (a\mrarr b)\leq \wn a\mrarr \wn b$ & \\
iff & $\wn a\leq \oc (a\mrarr b)\mrarr \wn b$ & (IL2 and IL4)\\
iff & $e_{\wn}(\gamma( a))\leq e_{\oc}(\iota (a\mrarr b))\mrarr e_{\wn}(\gamma( b))$ & ($\oc := e_{\oc}\circ\iota$ and $\wn: = e_{\wn}\circ\gamma$)\\
iff & $e_{\wn}(\gamma( a))\leq  e_{\wn}(\iota (a\mrarr b)\KKrarr \gamma( b))$ & (first assumption on $\KKrarr$)\\
iff & $\gamma( a)\leq  \iota (a\mrarr b)\KKrarr \gamma( b)$, & ($e_{\wn}$ order-embedding)\\
\end{tabular}
\end{center}
and the last inequality  is true by assumption. The proof of item 4 is order-dual to one of item 3 and is omitted.
\end{proof}

\subsection{Canonical extensions of linear algebras and their kernels}\label{ssec: canonical extensions}
In the previous subsection, we showed that  heterogeneous algebras for the various linear logics (cf.~Definition \ref{def:heterogeneous algebras}) are equivalent  presentations of linear algebras with exponentials, and hence can serve as  equivalent semantic structures for each linear logic, which can also be taken as the primary semantics. This change in perspective is particularly advantageous when it comes to defining the canonical extension of a linear algebra with exponential(s) in a way which uniformly applies general criteria. Indeed, the canonical extension  of a {\em normal} (distributive) lattice expansion $\mathbb{A} = (L, \mathcal{F}, \mathcal{G})$
(cf.~Definition \ref{def: normal (d)le}) is defined in a uniform way for any signature as the normal (distributive) lattice expansion $\mathbb{A}^\delta: = (L^\delta, \mathcal{F}^\sigma, \mathcal{G}^\pi)$, where $L^\delta$  is the canonical extension of  $L$ (cf.~Definition \ref{def:can:ext}), and $\mathcal{F}^\sigma: = \{f^\sigma\mid f\in \mathcal{F}\}$ and  $\mathcal{G}^\pi: = \{g^\pi\mid g\in \mathcal{G}\}$ (cf.~Definition \ref{def:f sigma and f pi}).

However, since the exponentials are {\em not normal} when regarded as operations on linear algebras, when taking them as primary we do not have general guidelines in choosing whether to take the $\sigma$- or the $\pi$-extension of each (cf.~Definition \ref{def:f sigma and f pi}),  given that  the $\sigma$-extensions and $\pi$-extensions of exponentials do not coincide in general, and   different settings or purposes might provide different reasons to choose one extension over the other. So we would need to motivate our choice on the basis of considerations which might not easily be portable to other settings  (as done in \cite{coumans2014relational}).  

In contrast, when {\em defining} exponentials   as compositions of pairs of adjoint maps, thanks to the fact that adjoint maps are {\em normal} (in the sense of Definition \ref{def: normal (d)le}) in a lattice-based environment,  the general criterion for defining the canonical extensions of normal lattice expansions can be straightforwardly exported to the heterogeneous algebras of Definition \ref{def:heterogeneous algebras}. Following the  general guidelines, we take the $\sigma$-extensions of left adjoint maps  and the $\pi$-extensions of the right adjoint maps (which are themselves adjoints, by the general theory), and then {\em define} the canonical extension of exponentials as   the composition of these. Since the adjoint maps are normal and unary, they have the extra benefit of being smooth (that is, their $\sigma$- and $\pi$-extensions coincide cf.~Section \ref{sec: background can ext}), but this is not essential. The essential aspect is that this definition is not taken on a case-by-case basis, but rather instantiates a general criterion.

As a practical benefit of this defining strategy,  we are now in a position  to obtain two key properties of  the identities and inequalities defining  the heterogeneous algebras of Definition \ref{def:heterogeneous algebras} as instances of general results in the theory of  unified correspondence  \cite{CoGhPa14,ConPalSou,CoPa12,PaSoZh15,CoPa:non-dist,CP:constructive,GMPTZ} for (multi-type) normal (distributive) lattice expansions.   Specifically, these inequalities  are all of a certain syntactic shape called {\em analytic inductive} (cf.~Definition \ref{def:analytic inductive ineq}). 
By unified correspondence theory,  (analytic) inductive inequalities (1) are  canonical (cf.~Theorem \ref{theor: analytic inductive is canonical}, \cite[Theorem 19]{GMPTZ}), and (2) can be equivalently encoded into analytic rules of a proper display calculus (cf.~\cite[Proposition 59]{GMPTZ}). Property (1) guarantees that the validity of all the identities and inequalities defining the heterogeneous algebras  of the lower rows of the diagrams in the statement of Proposition \ref{prop:canonical extensions} below transfers to the heterogeneous algebras in the upper rows of the same diagrams, and hence, the algebraic completeness of each original logical system transfers to the corresponding proper subclass of  {\em perfect} heterogeneous algebras. Moreover, the heterogeneous algebras  in the upper rows are such that all the various maps involved (including $\KKrarr^\pi$ and $\KKBrarr^\sigma$) are residuated in each coordinate, which implies that the display postulates relative to these connectives are sound. Property (2) guarantees that
the identities and inequalities defining the heterogeneous algebras of Definition \ref{def:heterogeneous algebras} can be equivalently encoded into (multi-type) analytic rules, which will form part of the calculi introduced in Section \ref{sec:ProperDisplayCalculiForLinearLogics}.

In what follows, we let $\mathbb{L}^\delta$, $\mathbb{A}^\delta$ and $\mathbb{B}^\delta$ denote the canonical extensions of $\mathbb{L}$, $\mathbb{A}$ and $\mathbb{B}$ respectively.
\begin{proposition}
\label{prop:canonical extensions}
For any algebra $\mathbb{L}$,
\begin{enumerate}
\item If $(\mathbb{L}, \mathbb{A}, e_{\oc}, \iota)$ is an heterogeneous ILS-algebra (resp.~heterogeneous CLS-algebra),
then $(\mathbb{L}^\delta, \mathbb{A}^\delta, e_{\oc}^\sigma, \iota^\pi)$ is a perfect heterogeneous ILS-algebra (resp.\ CLS-algebra).
\begin{center}
\begin{tikzpicture}[node/.style={circle, draw, fill=black}, scale=1]

\node (A) at (-1.5,-1.5) {$\mathbb{A}$};
\node (A delta) at (-1.5,1.5) {$\mathbb{A}^{\delta}$};
\node (L) at (1.5,-1.5) {$\mathbb{L}$};
\node (L delta) at (1.5,1.5) {$\mathbb{L}^{\delta}$};
\node (adj) at (0,-1.1) {{\rotatebox[origin=c]{270}{$\vdash$}}};
\node (adju) at (0,1.9) {{\rotatebox[origin=c]{270}{$\vdash$}}};
\draw [right hook->] (A) to  (A delta);
\draw [right hook->] (L)  to (L delta);
\draw [right hook->] (A)  to node[below] {$e_{\oc}$}  (L);
\draw [right hook->] (A delta)  to node[below] {$e_{\oc}^{\sigma}$}  (L delta);

\draw [->>] (L delta) to [out=135,in=45, looseness=1]   node[above] {$\iota^{\pi}$}  (A delta);
\draw [->>] (L) to [out=135,in=45, looseness=1]   node[above] {$\iota$}  (A);
\end{tikzpicture}
\end{center}

\item If $(\mathbb{L}, \mathbb{A}, \mathbb{B}, e_{\oc}, \iota, e_{\wn}, \gamma)$ is an heterogeneous ILSC-algebra (resp.~heterogeneous BLSC-algebra),
then $(\mathbb{L}^\delta, \mathbb{A}^\delta, \mathbb{B}^\delta, e_{\oc}^\sigma, \iota^\pi, e_{\wn}^\pi, \gamma^\sigma)$ is a perfect heterogeneous ILSC-algebra (resp.\ BLSC-algebra).
\begin{center}
\begin{tikzpicture}[node/.style={circle, draw, fill=black}, scale=1]

\node (A) at (-1.5,-1.5) {$\mathbb{A}$};
\node (A delta) at (-1.5,1.5) {$\mathbb{A}^{\delta}$};
\node (L) at (1.5,-1.5) {$\mathbb{L}$};
\node (L delta) at (1.5,1.5) {$\mathbb{L}^{\delta}$};
\node (B) at (4.5,-1.5) {$\mathbb{B}$};
\node (B delta) at (4.5,1.5) {$\mathbb{B}^{\delta}$};
\node (adj) at (0,-1.1) {{\rotatebox[origin=c]{270}{$\vdash$}}};
\node (adju) at (0,1.9) {{\rotatebox[origin=c]{270}{$\vdash$}}};
\node (adj) at (3,-1.1) {{\rotatebox[origin=c]{90}{$\vdash$}}};
\node (adju) at (3,1.9) {{\rotatebox[origin=c]{90}{$\vdash$}}};
\draw [right hook->] (A) to  (A delta);
\draw [right hook->] (L)  to (L delta);
\draw [right hook->] (B) to  (B delta);
\draw [right hook->] (A)  to node[below] {$e_{\oc}$}  (L);
\draw [right hook->] (A delta)  to node[below] {$e_{\oc}^{\sigma}$}  (L delta);
\draw [left hook->] (B)  to node[below] {$e_{\wn}$}  (L);
\draw [left hook->] (B delta)  to node[below] {$e_{\wn}^{\pi}$}  (L delta);

\draw [->>] (L delta) to [out=135,in=45, looseness=1]   node[above] {$\iota^{\pi}$}  (A delta);
\draw [->>] (L) to [out=135,in=45, looseness=1]   node[above] {$\iota$}  (A);
\draw [<<-] (B delta) to [out=135,in=45, looseness=1]   node[above] {$\gamma^{\sigma}$}  (L delta);
\draw [<<-] (B) to [out=135,in=45, looseness=1]   node[above] {$\gamma$}  (L);
\end{tikzpicture}
\end{center}

\item If $(\mathbb{L}, \mathbb{A}, \mathbb{B}, e_{\oc}, \iota, e_{\wn}, \gamma, \KKrarr)$ is an heterogeneous ILP-algebra,
then $$(\mathbb{L}^\delta, \mathbb{A}^\delta, \mathbb{B}^\delta, e_{\oc}^\sigma, \iota^\pi, e_{\wn}^\pi, \gamma^\sigma, \KKrarr^\pi)$$ is a perfect heterogeneous ILP-algebra. Hence,  \[\KKrarr^\pi: \mathbb{A}^\delta\times \mathbb{B}^\delta\longrightarrow \mathbb{B}^\delta\] has residuals $\KKBand: \mathbb{A}^\delta\times \mathbb{B}^\delta\longrightarrow \mathbb{B}^\delta$ and $\KKlarr : \mathbb{B}^\delta\times \mathbb{B}^\delta\longrightarrow \mathbb{A}^\delta$ in each coordinate.

\item If $(\mathbb{L}, \mathbb{A}, \mathbb{B}, e_{\oc}, \iota, e_{\wn}, \gamma, \KKrarr, \KKBrarr)$  is an heterogeneous BLP-algebra,
then $$(\mathbb{L}^\delta, \mathbb{A}^\delta, \mathbb{B}^\delta, e_{\oc}^\sigma, \iota^\pi, e_{\wn}^\pi, \gamma^\sigma, \KKrarr^\pi, \KKBrarr^\sigma)$$ is a perfect heterogeneous ILP-algebra. Hence, not only $\KKrarr^\pi$ has residuals as in the item above, but also \[\KKBrarr^\sigma: \mathbb{B}^\delta\times \mathbb{A}^\delta\longrightarrow \mathbb{A}^\delta\] has residuals $\KKor: \mathbb{B}^\delta\times \mathbb{A}^\delta\longrightarrow \mathbb{A}^\delta$ and $\KKBlarr: \mathbb{A}^\delta\times \mathbb{A}^\delta\longrightarrow \mathbb{B}^\delta$ in each coordinate.
\end{enumerate}
\end{proposition}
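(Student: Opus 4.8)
The plan is to treat all four items uniformly. For each, one canonically extends every carrier and assigns to each basic operation its $\sigma$-extension if it is a left adjoint or an $\mathcal{F}$-type connective, and its $\pi$-extension if it is a right adjoint or a $\mathcal{G}$-type connective; one then checks, clause by clause, that the resulting tuple satisfies the definition of the relevant class of \emph{perfect} heterogeneous algebras (Definition~\ref{def:heterogeneous algebras}). The lattice-theoretic clauses will be discharged by the standard theory of canonical extensions of normal (distributive) lattice expansions, and all remaining identities and inequalities by the canonicity of analytic inductive inequalities (Theorem~\ref{theor: analytic inductive is canonical}), using the fact --- recorded in the discussion preceding the statement --- that every identity and inequality defining the heterogeneous algebras of Definition~\ref{def:heterogeneous algebras} is of analytic inductive shape in the multi-type signature.

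First I would dispose of the lattice parts. The axioms of IL-, CL- and BiL-algebras (IL1--IL5, C, B1--B2), together with the Heyting-algebra, distributive-lattice and co-Heyting-algebra axioms, are analytic inductive, so by canonicity the reducts $\mathbb{L}^\delta$, $\mathbb{A}^\delta$, $\mathbb{B}^\delta$ again belong to the appropriate classes; each is moreover perfect, since the canonical extension of a bounded lattice is a perfect lattice, and the fundamental operations of these algebras, extended by $\sigma$ or $\pi$ according to their order type, have the complete distribution properties required by Definition~\ref{def:perfect LE}.

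Next I would handle the linking maps. Since $e_{\oc}\dashv\iota$ and $\gamma\dashv e_{\wn}$, the maps $e_{\oc},\gamma$ preserve all existing joins and $\iota,e_{\wn}$ all existing meets, so $e_{\oc}^\sigma,\gamma^\sigma$ are completely join-preserving and $\iota^\pi,e_{\wn}^\pi$ completely meet-preserving; by the standard fact that canonical extension carries an adjoint pair to an adjoint pair, $e_{\oc}^\sigma\dashv\iota^\pi$ and $\gamma^\sigma\dashv e_{\wn}^\pi$. The retraction identities $\iota(e_{\oc}(\alpha))=\alpha$ and $\gamma(e_{\wn}(\xi))=\xi$, and the compatibility identities $e_{\oc}(\alpha)\mand e_{\oc}(\beta)=e_{\oc}(\alpha\kandk\beta)$, $e_{\oc}(\ktopk)=\mtop$, $e_{\wn}(\xi)\mor e_{\wn}(\chi)=e_{\wn}(\xi\kork\chi)$, $e_{\wn}(\kbotk)=\mbot$, are analytic inductive, hence canonical, so they transfer verbatim with the chosen extensions; this settles items 1 and 2. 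For items 3 and 4, Proposition~\ref{prop:order theoret properties eterogeneous arrows} identifies $\KKrarr$ as a $\mathcal{G}$-type binary connective (meet-preserving in its second, join-reversing in its first coordinate) and $\KKBrarr$ as an $\mathcal{F}$-type one, so the general prescription selects exactly $\KKrarr^\pi$ and $\KKBrarr^\sigma$; the identities and inequalities in which they occur are again analytic inductive and so transfer, and the $\pi$-extension of a normal $\mathcal{G}$-connective is completely meet-preserving in its positive coordinates and completely join-reversing in its negative ones (dually for the $\sigma$-extension of an $\mathcal{F}$-connective), which yields the perfection clauses for $\KKrarr^\pi$ and $\KKBrarr^\sigma$. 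Finally, since $\KKrarr^\pi$ and $\KKBrarr^\sigma$ act between complete lattices and preserve or reverse arbitrary joins and meets in each coordinate, the residuals $\KKBand,\KKlarr$ (resp.~$\KKor,\KKBlarr$) in each coordinate exist by the elementary fact that, between complete lattices, a completely join-preserving map has an upper adjoint, a completely meet-preserving map a lower adjoint, and a join-to-meet reversing map a Galois adjoint.

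The hard part lies not in this proposition but behind it: the genuinely substantive input is the syntactic claim that \emph{all} the defining identities and inequalities of the heterogeneous algebras are analytic inductive in the multi-type signature, which is what the analysis of Sections~\ref{sec: semantic environment} and \ref{sec: multi-type language} secures. Granting this, the remaining care is purely bookkeeping: one must classify each operation correctly as a left/right adjoint or an $\mathcal{F}$/$\mathcal{G}$-type connective so that the extensions named in the statement are exactly those delivered by Theorem~\ref{theor: analytic inductive is canonical}, and one must check that in composite terms such as $\iota\circ e_{\oc}$ no clash of prescriptions arises --- which it does not, since all the unary maps involved are adjoints and the canonicity theorem assigns them their extensions in a uniform way.
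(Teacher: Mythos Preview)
Your proposal is correct and follows essentially the same approach as the paper: both arguments reduce the claim to (i) the basic fact that adjoint pairs lift to adjoint pairs under canonical extension, (ii) the canonicity of the defining identities and inequalities via their analytic-inductive shape (Theorem~\ref{theor: analytic inductive is canonical}), and (iii) the complete preservation/reversal properties of the $\sigma$-/$\pi$-extensions of $\KKrarr$ and $\KKBrarr$ (using Proposition~\ref{prop:order theoret properties eterogeneous arrows}) to obtain the residuals. The only cosmetic difference is that the paper handles the identity $e_{\oc}^\sigma(\ktop)=\mtop$ and one half of the retraction identity by direct observation rather than subsuming them under canonicity.
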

\begin{proof}
As to item 1, it is a basic fact (cf.\ Section \ref{sec: background can ext}) that  $e_{\oc}\dashv \iota$ implies that  $e_{\oc}^\sigma\dashv \iota^\pi$. This in turn implies that $\iota^{\pi}(e_{\oc}^\sigma (\upsilon))\leq \upsilon$ for every $\upsilon\in \mathbb{A}^\delta$. The converse inequality $ \upsilon\leq \iota^{\pi}(e_{\oc}^\sigma (\upsilon))$ also holds, since the original inequality $\alpha\leq \iota(e_{\oc} (\alpha))$ is valid in $\mathbb{A}$ and  is analytic inductive (cf.\ Definition \ref{def:analytic inductive ineq}), and hence  canonical (cf.~Theorem \ref{theor: analytic inductive is canonical}). The identity $e_{\oc}^\sigma(\ktop) = \mtop$ clearly holds, since $\mathbb{A}$ and $\mathbb{L}$ are subalgebras of $\mathbb{A}^\delta$ and $\mathbb{L}^\delta$ respectively, $e_{\oc}^\sigma$ coincides with $e_{\oc}$ on $\mathbb{A}$ and $e_{\oc}(\ktop) = \mtop$. Finally, the two inequalities $e_{\oc}(\alpha)\mand e_{\oc}(\beta) \leq  e_{\oc}(\alpha\kandk \beta)$ and  $e_{\oc}(\alpha\kandk \beta)\leq e_{\oc}(\alpha)\mand e_{\oc}(\beta)$ are also analytic inductive, hence canonical, which completes the proof that $(\mathbb{L}^\delta, \mathbb{A}^\delta, e_{\oc}^\sigma, \iota^\pi)$ is an heterogeneous ILS-algebra (resp.\ CLS-algebra), which is also perfect, since $\mathbb{L}^\delta$ and $\mathbb{A}^\delta$ are perfect (cf.\ Section \ref{sec: background can ext}). The remaining items are proved in a similar way, observing that all the inequalities mentioned in these statements are analytic inductive (cf.~Definition \ref{def:analytic inductive ineq}), hence canonical. The existence of the residuals of $\KKrarr$ and $\KKBrarr$, as well as the claim that the heterogeneous algebras are perfect, can be argued as follows: by a proof analogous to the proof of the first (resp.\ second) item (b) of Proposition \ref{prop:order theoret properties eterogeneous arrows}, one shows that  $\KKrarr$ (resp.\ $\KKBrarr$) is finitely meet-preserving (resp.\ join-preserving) in its second coordinate and finitely join-reversing (resp.\ meet-reversing) in its first coordinate. As discussed in Section \ref{sec: background can ext}, this implies that $\KKrarr^\pi$ (resp.\ $\KKBrarr^\sigma$) is completely meet-preserving (resp.\ join-preserving) in its second coordinate and completely join-reversing (resp.\ meet-reversing) in its first coordinate. Since these maps are defined between complete lattices, this is sufficient to infer the existence of the required residuals.
\end{proof}
The following is an immediate consequence of Propositions \ref{prop:from single to multi}, \ref{prop:reverse engineering} and \ref{prop:canonical extensions}.
\begin{corollary}
\label{cor:canonical ext}
For any algebra $\mathbb{L}$,
\begin{enumerate}
\item if $(\mathbb{L}, \oc)$ is an ILS-algebra (resp.\ CLS-algebra),
then $\mathbb{L}^\delta$ can be endowed with the structure of ILS-algebra (resp.\ CLS-algebra) by defining  $\oc^\delta: \mathbb{L}^\delta\rightarrow \mathbb{L}^\delta$ by $\oc^\delta : = e_{\oc}^\sigma\circ \iota^\pi$. Moreover, $\mathbb{K}_{\oc}^\delta\cong \mathbb{K}_{\oc^\delta}$.

\item If $(\mathbb{L}, \oc, \wn)$ is an ILSC-algebra (resp.\ BLSC-algebra),
then $\mathbb{L}^\delta$ can be endowed with the structure of ILSC-algebra (resp.\ BLSC-algebra) by defining $\oc^\delta$ as in the item above and $\wn^\delta: \mathbb{L}^\delta\rightarrow \mathbb{L}^\delta$ by $\wn^\delta : = e_{\wn}^\pi\circ \gamma^\sigma$. Moreover, $\mathbb{K}_{\oc}^\delta\cong \mathbb{K}_{\oc^\delta}$ and $\mathbb{K}_{\wn}^\delta\cong \mathbb{K}_{\wn^\delta}$.
\item If $(\mathbb{L}, \oc, \wn)$ is an ILP-algebra  (resp.\ BLP-algebra),
then $\mathbb{L}^\delta$ can be endowed with the structure of ILP-algebra (resp.\ BLP-algebra) by defining $\oc^\delta$ and $\wn^\delta$ as in the items above.
\end{enumerate}
\end{corollary}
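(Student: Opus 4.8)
The plan is to obtain the statement by directly chaining the three preceding propositions, since the full round trip — from a linear algebra with exponential(s) to its associated heterogeneous algebra (Proposition~\ref{prop:from single to multi}), then to the canonical extension of the latter taken as a heterogeneous algebra (Proposition~\ref{prop:canonical extensions}), and then back again to a linear algebra with exponential(s) (Proposition~\ref{prop:reverse engineering}) — has already been assembled piece by piece. Concretely, for item~1 I would start from an ILS-algebra (resp.\ CLS-algebra) $(\mathbb{L}, \oc)$ and apply Proposition~\ref{prop:from single to multi}(1) to produce the heterogeneous ILS-algebra (resp.\ CLS-algebra) $(\mathbb{L}, \mathbb{K}_\oc, e_\oc, \iota)$ with $\mathbb{K}_\oc, e_\oc, \iota$ as in Definition~\ref{def:kernel} (the Heyting, resp.\ Boolean, structure of $\mathbb{K}_\oc$ being supplied by Proposition~\ref{prop: algebraic structure on kernels}). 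Next I would apply Proposition~\ref{prop:canonical extensions}(1) to this heterogeneous algebra to obtain that $(\mathbb{L}^\delta, \mathbb{K}_\oc^\delta, e_\oc^\sigma, \iota^\pi)$ is a \emph{perfect} heterogeneous ILS-algebra (resp.\ CLS-algebra); here $\mathbb{L}^\delta$ is the canonical extension of the IL-algebra (resp.\ CL-algebra) reduct of $\mathbb{L}$, which is a normal lattice expansion and hence has a canonical extension in the standard sense. Finally I would feed $(\mathbb{L}^\delta, \mathbb{K}_\oc^\delta, e_\oc^\sigma, \iota^\pi)$ into Proposition~\ref{prop:reverse engineering}(1): this endows $\mathbb{L}^\delta$ with the structure of an ILS-algebra (resp.\ CLS-algebra) via $\oc^\delta := e_\oc^\sigma \circ \iota^\pi$, and its ``moreover'' clause yields precisely $\mathbb{K}_\oc^\delta \cong \mathbb{K}_{\oc^\delta}$.

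Items~2 and~3 follow by the same three-step recipe, now invoking items~2, 3 (resp.\ 4) of each of the three propositions. From $(\mathbb{L}, \oc, \wn)$ an ILSC-algebra (resp.\ BLSC-, ILP-, BLP-algebra) one passes to the corresponding heterogeneous algebra over $\mathbb{K}_\oc$ and $\mathbb{K}_\wn$ (and, in the paired cases, the maps $\KKrarr$, $\KKBrarr$ fixed right after Proposition~\ref{prop:existence of multi-type implication}), then to its canonical extension over $\mathbb{K}_\oc^\delta$, $\mathbb{K}_\wn^\delta$ (with $e_\wn^\pi$, $\gamma^\sigma$ and, when present, $\KKrarr^\pi$, $\KKBrarr^\sigma$), and finally back via Proposition~\ref{prop:reverse engineering} to a linear algebra with exponentials on $\mathbb{L}^\delta$, setting $\wn^\delta := e_\wn^\pi \circ \gamma^\sigma$; the remaining kernel isomorphisms $\mathbb{K}_\oc^\delta \cong \mathbb{K}_{\oc^\delta}$ and $\mathbb{K}_\wn^\delta \cong \mathbb{K}_{\wn^\delta}$ again come out of the ``moreover'' clauses.

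I do not expect a genuine obstacle here — the corollary really is immediate once the three propositions are in place — but three bookkeeping points deserve to be made explicit. First, the symbol $\oc^\delta$ in the statement is by \emph{definition} the composite $e_\oc^\sigma \circ \iota^\pi$ (and likewise $\wn^\delta := e_\wn^\pi \circ \gamma^\sigma$), not the $\sigma$- or $\pi$-extension of $\oc$ viewed as a single non-normal operation; it is exactly this choice that makes Proposition~\ref{prop:reverse engineering} applicable verbatim, and it instantiates the general criterion discussed just before Proposition~\ref{prop:canonical extensions}. Second, these composites restrict on $\mathbb{L}$ to the original operations, because $e_\oc^\sigma, \iota^\pi$ (resp.\ $e_\wn^\pi, \gamma^\sigma$) agree with $e_\oc, \iota$ (resp.\ $e_\wn, \gamma$) on the relevant subalgebras and $\oc = e_\oc \circ \iota$, $\wn = e_\wn \circ \gamma$. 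Third, in the paired cases the target is a (perfect) heterogeneous ILP-/BLP-algebra only because the residuals of $\KKrarr^\pi$ and $\KKBrarr^\sigma$ exist in each coordinate — but that is exactly what Proposition~\ref{prop:canonical extensions}(3)--(4) record, so nothing further is needed. Assembling these remarks, the corollary follows.
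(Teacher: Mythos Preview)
Your proposal is correct and follows exactly the approach of the paper, which simply states that the corollary is an immediate consequence of Propositions~\ref{prop:from single to multi}, \ref{prop:reverse engineering} and \ref{prop:canonical extensions}. Your write-up spells out the three-step composition in more detail than the paper does, and the bookkeeping remarks you add are accurate and helpful, but the underlying argument is the same.
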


\section{Multi-type Hilbert-style presentation for linear logic}\label{sec: multi-type language}
In Section \ref{ssec: reverse engineering}, the heterogeneous algebras associated with the various linear logics have been introduced (cf.~Definition  \ref{def:heterogeneous algebras})  and shown to be  equivalent presentations of  linear algebras with exponentials. These constructions  motivate from a semantic perspective the syntactic shift we take in the present section, from the original single-type language to a multi-type language. Indeed, the heterogeneous algebras of Definition  \ref{def:heterogeneous algebras} provide a natural interpretation for the following multi-type language $\mathcal{L}_{\mathrm{MT}}$,\footnote{\label{footnote:comparison Benton} There are clear similarities between $\mathcal{L}_{\mathrm{MT}}$ and the language of Linear Non Linear logic LNL \cite{Benton1994}, given that they both aim at capturing the interplay between the linear and the non linear behaviour. However, there are also differences: for instance, in $\mathcal{L}_{\mathrm{MT}}$, only the $\mathsf{Linear}$ type has atomic propositions, whereas in LNL each type has its own atomic propositions. This difference  reflects a difference in the aims of \cite{Benton1994} and of the present paper:  while \cite{Benton1994} aims at studying the environment of adjunction models in their own right,  the present paper aims at studying Girard's linear logic and its variants through the lenses of the multi-type environment, and hence focuses on the specific multi-type language adequate for this task. As we will discuss in the following section, we will present a slightly different version of this language, which accounts for 
the residuals of $\KKrarr$ and $\KKBrarr$ in each coordinate, and  the residuals of $\kor$ and $\andk$. Finally, in the next section we will use a different notation for the heterogeneous unary connectives, which is aimed at emphasizing their standard proof-theoretic behaviour rather than their intended algebraic interpretation.} defined by simultaneous induction  from a given set $\mathsf{AtProp}$ of atomic propositions (the elements of which are denoted by letters $p, q$):\footnote{We specify the language corresponding to BLP-algebras, which is the richest signature. The multi-type languages corresponding to the other linear algebras are defined analogously,  suitably omitting the defining clauses which are not applicable.}
\begin{align*}
\mathsf{\oc\text{-}Kernel}\ni \alpha ::=&\, \iota(A) \mid \ktop \mid \kbot \mid \alpha \kor \alpha \mid  \alpha \kand \alpha \mid \alpha \krarr \alpha \mid \xi\KKBrarr \alpha \\
\mathsf{\wn\text{-}Kernel}\ni \xi ::=&\, \gamma(A) \mid \topk \mid \botk \mid \xi \andk \xi \mid \xi \ork \xi \mid \xi \drarrk \xi \mid \alpha\KKrarr \xi \\
\mathsf{Linear}\ni  A ::= & \,p \mid \, e_{\oc}(\alpha) \mid e_{\wn}(\xi) \mid \mtop \mid \mbot \mid A\mneg \mid A \mand A \mid A \mor A \mid A \mrarr A \mid A \mdrarr A\mid \aatop \mid \abot \mid A \aand A \mid A \aor A
\end{align*}
The interpretation of $\mathcal{L}_{\mathrm{MT}}$-terms  into heterogeneous algebras of compatible signature is defined as the straightforward generalization of the interpretation of propositional languages in algebras of compatible signature.

The toggle between linear algebras with exponentials and heterogeneous algebras (cf.\ Sections \ref{ssec: linear algebras}, \ref{ssec:from ono axioms to eterogeneous arrows} and  \ref{ssec: reverse engineering}) is reflected syntactically by the following translation $(\cdot)^t: \mathcal{L}\to \mathcal{L}_{\mathrm{MT}}$ 
between the original language(s) $\mathcal{L}$ of linear logic(s)  and (their corresponding multi-type languages)  $\mathcal{L}_{\mathrm{MT}}$:

\begin{center}
\begin{tabular}{r c l c r c l}
$p^t$ &$ = $& $p$ & $\quad\quad$ &$(A\mneg)^t$ &$ = $& $(A^t)\mneg$\\
$\aatop^t$ &$ = $& $\aatop$ && $\abot^t$ &$ = $& $\abot$\\
$\mtop^t$ &$ = $& $\mtop$ && $\mbot^t$ &$ = $& $\mbot$\\
$(A\aand B)^t$ &$ = $& $A^t \aand B^t$ && $(A\aor B)^t$ &$ = $& $A^t \aor B^t$\\
$(A\mand B)^t$ &$ = $& $A^t \mand B^t$ && $(A\mor B)^t$ &$ = $& $A^t \mor B^t$\\
$(A\mrarr B)^t$ &$ = $& $A^t \mrarr B^t$ && $(A\mdrarr B)^t$ &$ = $& $A^t \mdrarr B^t$\\
$(\oc A)^t$ &$ = $& $ e_{\oc}\iota(A^t)$ && $(\wn A)^t$ &$ = $& $e_{\wn}\gamma (A^t)$
\end{tabular}
\end{center}
Not only does the translation  $(\cdot)^t: \mathcal{L}\to \mathcal{L}_{\mathrm{MT}}$ 
elicit the switch from the single-type language to the multi-type language, but is also compatible with the underlying toggle between linear algebras with exponentials and their associated heterogeneous algebras. Indeed, for  every $\mathcal{L}$-algebra $\mathbb{L}$, let $\mathbb{L}^*$ denote its associated heterogeneous algebra (cf.~Proposition \ref{prop:from single to multi}). The following proposition is proved by a routine induction on  $\mathcal{L}$-formulas, using the deduction-detachment theorem of linear logic.
\begin{proposition}
\label{prop:consequence preserved and reflected}
For all $\mathcal{L}$-formulas $A$ and $B$ and every $\mathcal{L}$-algebra $\mathbb{L}$,
\[\mathbb{L}\models A\leq B \quad \mbox{ iff }\quad \mathbb{L}^*\models A^t\leq B^t.\]
\end{proposition}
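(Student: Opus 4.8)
The plan is to derive the biconditional from a single interpretation-preservation lemma, to the effect that the translation $(\cdot)^{t}$ commutes with semantic interpretation. Precisely, I would fix an $\mathcal{L}$-algebra $\mathbb{L}$ and let $\mathbb{L}^{*}$ be its associated heterogeneous algebra (cf.\ Proposition \ref{prop:from single to multi}). Since, as recalled above, only the $\mathsf{Linear}$ type carries atomic propositions, any assignment $v\colon\mathsf{AtProp}\to L$ is simultaneously an assignment into $\mathbb{L}$ and into the $\mathsf{Linear}$-component of $\mathbb{L}^{*}$, and this gives a canonical bijection between the assignments on the two structures. Along this bijection, the lemma to prove is that $[\![ C ]\!]^{\mathbb{L}}_{v} = [\![ C^{t} ]\!]^{\mathbb{L}^{*}}_{v}$ for every $\mathcal{L}$-formula $C$ and every $v$, where $[\![ \cdot ]\!]$ denotes the obvious term function. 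Granting this, the proposition is immediate: $\mathbb{L}\models A\leq B$ iff $[\![ A ]\!]^{\mathbb{L}}_{v}\leq[\![ B ]\!]^{\mathbb{L}}_{v}$ for all $v$, iff $[\![ A^{t} ]\!]^{\mathbb{L}^{*}}_{v}\leq[\![ B^{t} ]\!]^{\mathbb{L}^{*}}_{v}$ for all $v$, iff $\mathbb{L}^{*}\models A^{t}\leq B^{t}$.

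The lemma itself I would prove by a routine induction on the structure of $C$. The base case $C=p$ holds since $p^{t}=p$, and the cases of the constants $\mtop,\mbot,\aatop,\abot$ are equally immediate, as each translates to itself and is interpreted by the same element of $L$ in $\mathbb{L}$ and in $\mathbb{L}^{*}$. If the main connective of $C$ is one of the linear connectives $\mand,\mor,\mrarr,\mdrarr,\aand,\aor$ or the negation $(\cdot)\mneg$, then $(\cdot)^{t}$ leaves that connective in place and acts on the immediate subformulas; since $\mathbb{L}^{*}$ carries $\mathbb{L}$ itself as its $\mathsf{Linear}$-component, these connectives denote the very same operations in both structures, and the claim follows at once from the induction hypothesis. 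The only clauses calling for an argument are those of the exponentials. If $C=\oc B$, then $(\oc B)^{t}=e_{\oc}\iota(B^{t})$, so the induction hypothesis yields $[\![ (\oc B)^{t} ]\!]^{\mathbb{L}^{*}}_{v}=e_{\oc}\bigl(\iota([\![ B ]\!]^{\mathbb{L}}_{v})\bigr)$; and by the construction in Section \ref{ssec: linear algebras} and Proposition \ref{prop:from single to multi} --- in which $\mathbb{K}_{\oc}=\mathsf{Range}(\oc)$, the map $\iota$ is $a\mapsto\oc a$ and $e_{\oc}$ is the inclusion, so that $e_{\oc}\circ\iota=\oc$ as a map on $L$ --- this equals $\oc[\![ B ]\!]^{\mathbb{L}}_{v}=[\![ \oc B ]\!]^{\mathbb{L}}_{v}$. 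The case $C=\wn B$ is entirely symmetric, using $e_{\wn}\circ\gamma=\wn$; this closes the induction.

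Since the statement is purely equational, I do not expect a serious obstacle; the only point requiring care is the bookkeeping in the exponential clauses, i.e.\ that the pair of adjoint maps carried by $\mathbb{L}^{*}$ literally recomposes to the operations $\oc$ and $\wn$ of $\mathbb{L}$ (which is precisely the content of Proposition \ref{prop:from single to multi}), together with keeping the valuation bijection between $\mathbb{L}$ and $\mathbb{L}^{*}$ consistent throughout the induction. If one prefers to phrase $\mathbb{L}\models A\leq B$ via the consequence relation of linear logic rather than directly via assignments, then the reduction to the equational form handled above is supplied by the deduction--detachment theorem of linear logic (so that, e.g., $A\vdash B$ iff $\mtop\vdash A\mrarr B$), after which the same induction applies verbatim.
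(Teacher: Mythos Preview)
Your proposal is correct and follows essentially the same approach as the paper, which simply states that the result ``is proved by a routine induction on $\mathcal{L}$-formulas, using the deduction-detachment theorem of linear logic.'' You have spelled out the induction in more detail than the paper does, correctly identifying the key point that $e_{\oc}\circ\iota=\oc$ and $e_{\wn}\circ\gamma=\wn$ on $\mathbb{L}$ by construction of $\mathbb{L}^{*}$, which handles the only non-trivial inductive clauses.
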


The main technical difference between the single-type and the multi-type settings is that, while $\oc$ and $\wn$ are not {\em normal} (i.e.\ their algebraic interpretations are not finitely join-preserving or meet-preserving), all  connectives in $\mathcal{L}_{\mathrm{MT}}$ are normal, which allows to apply the {\em standard} proof-theoretic treatment for normal connectives to them (e.g.\ to associate each connective to its structural counterpart, have sound display rules etc.), according to the general definitions and results of multi-type algebraic proof theory \cite{GrecoPalmigianoMultiTypeAlgebraicProofTheory}. In particular, the general definition of {\em analytic inductive} inequalities can be instantiated to inequalities in the $\mathcal{L}_{\mathrm{MT}}$-signature (cf.\ Definition \ref{def:analytic inductive ineq}). Hence, we are now in a position to translate the identities and inequalities for the interpretations of the exponentials in linear algebras into $\mathcal{L}_{\mathrm{MT}}$ using $(\cdot)^t$, and verify whether the resulting translations are analytic inductive.
%
%

\begin{center}
\begin{tabular}{r l}
$\oc(A\aand B) = \oc A \mand \oc B\ \rightsquigarrow $ &
  $\begin{cases}
   e_{\oc}\iota(A\aand B) \leq e_{\oc}\iota A \mand e_{\oc}\iota B & (i)\\
   e_{\oc}\iota A \mand e_{\oc}\iota B\leq e_{\oc}\iota(A\aand B) & (ii)
  \end{cases}
  $
  \\
$\oc\aatop = \mtop\ \rightsquigarrow$ &
$ \begin{cases}
   e_{\oc}\iota\aatop \leq \mtop & (iii)\\
   \mtop \leq e_{\oc}\iota\aatop  & (iv)
  \end{cases}
  $
  \\
  $
\wn(A\aor B) = \wn A \mor \wn B\ \rightsquigarrow $&
  $\begin{cases}
   e_{\wn}\gamma(A\aor B) \leq e_{\wn}\gamma A \mor e_{\wn}\gamma B & (v)\\
   e_{\wn}\gamma A \mor e_{\wn}\gamma B\leq e_{\wn}\gamma(A\aor B) & (vi)
  \end{cases}
  $
  \\

$
\wn\abot = \mbot\ \rightsquigarrow $&
$  \begin{cases}
   e_{\wn}\gamma\abot \leq \mbot & (vii)\\
   \mbot \leq e_{\wn}\gamma\abot  & (viii)
  \end{cases}
  $
  \\

$\oc (A\mrarr B)\leq \wn A\mrarr \wn B\rightsquigarrow$ &
$\; e_{\oc}\iota(A\mrarr B) \leq e_{\wn}\gamma A\mrarr e_{\wn}\gamma B \quad (ix)$
   \\

$\oc A\mdrarr \oc B\leq \wn (A\mdrarr  B) \rightsquigarrow$ &
$\; e_{\oc}\iota A\mdrarr e_{\oc}\iota B\leq e_{\wn}\gamma(A\mdrarr B)  \quad (x)$
\\
  \end{tabular}
  \end{center}
It is easy to see that $(iii)$, $(iv)$, $(vii)$ and $(viii)$ are the only analytic inductive inequalities of the list above.  Indeed, recall (cf.\ Definition \ref{def:analytic inductive ineq}) that
%
$e_{\oc}$, $\gamma$, $\mand$, $\aand$ and $\mdrarr$ (resp.\ $e_{\wn}$, $\iota$, $\mor$, $\aor$ and $\mrarr$) are $\mathcal{F}$-connectives (resp.\ $\mathcal{G}$-connectives), since their interpretations preserve finite joins (resp.\ meets) in each positive coordinate and reverse finite meets (resp.\ joins) in each negative coordinate.\footnote{Recall that, for the sake of the present paper, we have confined ourselves to distributive linear logic, but the failure of analyticity transfers of course also to the non-distributive setting.} Then $(i)$ and $(ii)$ violate analyticity because of $\iota$ occurring in the scope of $e_{\oc}$ in the right-hand side,
 $(v)$ and $(vi)$  because of $\gamma$ occurring in the scope of $e_{\wn}$ in the left-hand side, and $(ix)$ and $(x)$ because of the subterms $e_{\wn}\gamma A$ and $e_{\oc}\iota A$ respectively.

In the light of the general result  characterizing  analytic inductive inequalities as exactly those  equivalently captured by analytic rules of proper display calculi (cf.~\cite[Propositions 59 and 61]{GMPTZ}), the failure of the inequalities above to be analytic inductive  gives a clear identification of the main hurdle towards the definition of a {\em proper} display calculus for linear logic. 
%

%
However, the order-theoretic analysis developed in Section \ref{sec: semantic environment} also provides a pathway to a solution:

\begin{proposition}\label{prop: from non analytic to analytic}
Each  (in)equality in the left column of the following table is semantically  equivalent  on heterogeneous algebras of the appropriate signature to the corresponding (in)equality in the right column: 
\begin{center}
\begin{tabular}{r c|c l}
$e_{\oc}\iota(A\aand B)  =  e_{\oc}\iota A \mand e_{\oc}\iota B $  &&& $e_{\oc}(\alpha\kand \beta)  =   e_{\oc}\alpha \mand e_{\oc}\beta$\\
$e_{\oc}\iota\aatop  =  \mtop$ &&& $e_{\oc}\ktop = \mtop$ \\
$e_{\wn}\gamma(A\aor B)  =  e_{\wn}\gamma A \mor e_{\wn}\gamma B$ &&& $e_{\wn}(\xi\ork \chi)  =  e_{\wn}\xi \mor e_{\wn}\chi$\\
$e_{\wn}\gamma\abot  =  \mbot$ &&& $e_{\wn}\botk = \mbot$\\
$e_{\oc}\iota(A\mrarr B) \leq e_{\wn}\gamma A\mrarr e_{\wn}\gamma B$ &&& $\gamma A\leq  \iota (A\mrarr B)\KKrarr \gamma B$\\
$e_{\oc}\iota A\mdrarr e_{\oc}\iota B\leq e_{\wn}\gamma(A\mdrarr B)$  &&& $\gamma( A\mdrarr B)\KKBrarr \iota B\leq  \iota A$\\
\end{tabular}
\end{center}
 \end{proposition}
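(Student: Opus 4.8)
The plan is to prove the six equivalences one row at a time. In each case only the structural data of a heterogeneous algebra is used: the adjunctions $e_{\oc}\dashv\iota$ and $\gamma\dashv e_{\wn}$ (so that $\iota$, being a right adjoint, preserves all existing meets, and $\gamma$, being a left adjoint, preserves all existing joins), the section identities $\iota\circ e_{\oc}=Id$ and $\gamma\circ e_{\wn}=Id$ (in particular $e_{\oc}$ and $e_{\wn}$ are order-embeddings), the surjectivity of $\iota$ and $\gamma$, the defining equations \eqref{eq: main properties andm and rasm} for $\KKrarr$ and $\KKBrarr$, and residuation (conditions IL4 and B2) together with the commutativity of $\mand$ and $\mor$ (conditions IL2 and IL3). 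Essentially the same computations appear already in the proofs of Propositions~\ref{prop:order theoret properties eterogeneous arrows} and~\ref{prop:reverse engineering}.

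For the first four rows I would first record that, for all $A,B\in\mathbb{L}$, $\iota(A\aand B)=\iota A\kand\iota B$ and $\iota\aatop=\ktop$ (because $\iota$ is a right adjoint into a bounded lattice) and, dually, $\gamma(A\aor B)=\gamma A\ork\gamma B$ and $\gamma\abot=\botk$. The right-to-left direction of each such equivalence is then a one-line substitution: instantiating the right-column identity at $\alpha:=\iota A$, $\beta:=\iota B$ (resp.\ at $\xi:=\gamma A$, $\chi:=\gamma B$) and rewriting via meet-preservation (resp.\ join-preservation) yields the left-column identity, the constant rows being handled in the same way. For the converse direction I would invoke surjectivity: every $\alpha\in\mathbb{A}$ is of the form $\iota a$, hence the left-column identity applied to $a,b\in\mathbb{L}$ together with $\iota(a\aand b)=\iota a\kand\iota b$ gives $e_{\oc}(\alpha\kand\beta)=e_{\oc}\iota(a\aand b)=e_{\oc}\iota a\mand e_{\oc}\iota b=e_{\oc}\alpha\mand e_{\oc}\beta$; the $\gamma$-rows are order-dual.

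For rows five and six the argument is a single chain of equivalences, fixing $A,B\in\mathbb{L}$. For row five: since $e_{\wn}$ is an order-embedding, $\gamma A\le\iota(A\mrarr B)\KKrarr\gamma B$ iff $e_{\wn}\gamma A\le e_{\wn}\big(\iota(A\mrarr B)\KKrarr\gamma B\big)$; by the first equation of \eqref{eq: main properties andm and rasm} the latter term equals $e_{\oc}\iota(A\mrarr B)\mrarr e_{\wn}\gamma B$; and since $\mand$ is commutative, IL4 gives $x\le y\mrarr z$ iff $y\le x\mrarr z$, so the inequality is equivalent to $e_{\oc}\iota(A\mrarr B)\le e_{\wn}\gamma A\mrarr e_{\wn}\gamma B$, which is the left-column inequality. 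For row six: since $e_{\oc}$ is an order-embedding, $\gamma(A\mdrarr B)\KKBrarr\iota B\le\iota A$ iff $e_{\oc}\big(\gamma(A\mdrarr B)\KKBrarr\iota B\big)\le e_{\oc}\iota A$; by the second equation of \eqref{eq: main properties andm and rasm} the left term equals $e_{\wn}\gamma(A\mdrarr B)\mdrarr e_{\oc}\iota B$; applying B2 twice, with the commutativity of $\mor$ used in between, this is equivalent to $e_{\oc}\iota A\mdrarr e_{\oc}\iota B\le e_{\wn}\gamma(A\mdrarr B)$, the left-column inequality.

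I do not anticipate a real obstacle: the statement is order-theoretic bookkeeping and every ingredient has already been used in the paper. The only points deserving care are keeping track of which direction of the first four equivalences actually consumes the surjectivity of $\iota$ and $\gamma$ — namely the passage from the $\mathbb{L}$-quantified left column to the kernel-quantified right column — and, in the co-implication row, inserting the commutativity of $\mor$ at the right spot so that the two applications of B2 match up.
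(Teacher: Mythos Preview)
Your proposal is correct and follows essentially the same route as the paper's proof, which is just a two-line sketch invoking surjectivity and meet/join preservation of $\iota$ and $\gamma$ for the first four rows, and referring back to the computations in the proof of Proposition~\ref{prop:reverse engineering} (and Proposition~\ref{prop:order theoret properties eterogeneous arrows}) for the last two. You have in fact supplied the details the paper omits, including the careful tracking of where surjectivity is consumed and the commutativity step needed for the $\mdrarr$ row.
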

 \begin{proof}
The proof of the equivalence of the identities in the rows from the first to the fourth immediately follows from the fact that the map $\iota$ (resp.\ $\gamma$) is surjective and preserves finite meets (resp.\ joins). The equivalences of the last two rows are shown in the proof of Proposition \ref{prop:reverse engineering}.  
\end{proof}
The identities and inequalities in the right column of the statement of Proposition \ref{prop: from non analytic to analytic} can be then taken as an alternative multi-type Hilbert-style presentation of the ($\{\oc, \wn\}$-fragment of) linear logic.
Finally, it is easy to verify that these identities and inequalities  are all analytic inductive (cf.~Definition \ref{def:analytic inductive ineq}), and hence  can be equivalently encoded into analytic rules of a proper (multi-type) display calculus. In the next section, we introduce the calculi resulting from this procedure. 

\section{Proper display calculi for linear logics}
\label{sec:ProperDisplayCalculiForLinearLogics}

In the present section, we introduce  display calculi for the various linear logics captured by the algebras of Definition \ref{def:linear algebras}. As is typical of similar existing calculi, the language manipulated by each of these calculi is built up from structural and operational (aka logical) term constructors. In the tables of Section \ref{ssec:language of DLL}, each structural symbol in the upper rows corresponds to one or two logical symbols in the lower rows. The idea, which will be made precise in Section \ref{ssec:soundness}, is that the interpretation of each structural connective coincides with that of the corresponding logical connective on the left-hand (resp.~right-hand) side (if it exists) when occurring in precedent (resp.~succedent) position.

The language $\mathcal{L}_{\mathrm{MT}}$ introduced in the previous section and the language introduced in the following subsection are clearly related. However, there are  differences. Besides the fact that the language below has an extra layer of structural connectives, the main difference is that the pure kernel-type connectives are represented only at the {\em structural} level (as are the heterogeneous binary connectives and their residuals).\footnote{In the synoptic tables of the next subsection, the operational symbols which are represented only at the structural level will appear between round brackets.} This choice is in line with the main aim of the present paper, which revolves around the original system of linear logic defined by Girard, and its intuitionistic and bi-intuitionistic variants. Accordingly, we include at the operational level only the connectives that are directly involved in capturing original linear formulas. Nonetheless,  calculi for the logics of the various heterogeneous algebras would be easily obtainable as  variants of the calculi introduced below, just by adding their corresponding standard introduction rules.

\subsection{Language}
\label{ssec:language of DLL}

 In the present subsection, we introduce the language of the display calculi for the various linear logics (we will use D.LL to refer to them collectively). Below, we introduce the richest signature, i.e.~the one intended to capture  the linear logic of BLP-algebras. This signature includes the types $\mathsf{Linear}$, $\mathsf{\oc\textrm{-}Kernel}$, and $\mathsf{\wn\textrm{-}Kernel}$, sometimes abbreviated as $\mathsf{L}$, $\mathsf{K_\wn}$, and $\mathsf{K_\wn}$ respectively.




\begin{center}
\begin{tabular}{l}
$\mathsf{\phantom{_\wn}L} \left\{\begin{array}{l}
A  ::= \,p \mid \mtop \mid \mbot \mid A \mand A \mid A \mor A \mid A \mrarr A \mid A \mdrarr A \mid \aatop \mid \abot \mid A \aand A \mid A \aor A \mid \wdia \alpha \mid \boxw \xi \\
 \\
X ::= \,A \mid \MTOPBOT \mid X \MANDOR X \mid X \MARR X\mid \ATOPBOT  \mid X\AANDOR X\mid X\AARR X \mid \WCIRC \Gamma \mid \CIRCW \Pi \\
\end{array} \right.$
 \\

 \\

$\mathsf{K_\oc} \left\{\begin{array}{l}
\alpha     ::= \, \bbox A \\
 \\
\Gamma ::= \, \BCIRC X \mid \KTOPBOT \mid \Gamma \KANDOR \Gamma \mid \Gamma \KARR \Gamma \mid \Pi \KKOR\Gamma \mid \Pi \KKBARR \Gamma \mid \Pi \KKLARR \Pi \\
\end{array} \right.$
 \\

 \\

$\mathsf{K_\wn} \left\{\begin{array}{l}
\xi     ::= \, \diab A \\
 \\
\Pi ::= \, \CIRCB X \mid \TOPBOTK \mid \Pi \ANDORK \Pi \mid \Pi \ARRK \Pi \mid \Gamma \KKBAND \Pi \mid  \Gamma \KKRARR\Pi \mid \Gamma \KKBLARR  \Gamma \\
\end{array} \right.$
 \\
\end{tabular}
\end{center}

Our notational conventions assign different variables to different types, and hence allow us to drop the subscripts of the pure $\mathsf{Kernel}$ connectives and of the unary multi-type connectives, given that the parsing of expressions such as $\Pi \KARRK \Pi$ and $\wdiaw\Gamma$ is unambiguous.
\begin{itemize}
\item Structural and operational pure $\mathsf{K}$-type connectives:
\end{itemize}
\begin{center}
\begin{tabular}{|c|c|c|c|c|c|c|c|c|c|c|c|}
\hline
\mc{6}{|c|}{\fns $\mathsf{K}_{\oc}$-type connectives} & \mc{6}{|c|}{\fns $\mathsf{K}_{\wn}$-type connectives} \\
\hline
\mc{2}{|c|}{$\KTOPBOT$} & \mc{2}{c|}{$\KANDOR$} & \mc{2}{c|}{$\KARR$} & \mc{2}{|c|}{$\TOPBOTK$} & \mc{2}{c|}{$\ANDORK$} & \mc{2}{c|}{$\ARRK$}   \\
\hline
$(\ktop)$ & $(\kbot)$        & $(\kand)$ & $(\kor)$       & $(\kdrarr)$ & \mc{1}{c|}{$(\krarr)$} & \mc{1}{|c|}{$(\topk)$} & $(\botk)$        & $(\andk)$ & $(\ork)$       & $(\drarrk)$ & $(\rarrk)$ \\
\hline
\end{tabular}
\end{center}

\begin{itemize}
\item Structural and operational pure $\mathsf{L}$-type connectives:
\end{itemize}
\begin{center}
\begin{tabular}{|c|c|c|c|c|c|c|c|c|c|c|c|}
\hline
\mc{6}{|c|}{\fns Multiplicative connectives} & \mc{6}{|c|}{\fns Additive connectives} \\
\hline
\mc{2}{|c|}{$\MTOPBOT$} & \mc{2}{c|}{$\MANDOR$} & \mc{2}{c|}{$\MARR$} & \mc{2}{|c|}{$\ATOPBOT$} & \mc{2}{c|}{\,$\AANDOR$} & \mc{2}{c|}{$\AARR$} \\

\hline
$\mtop$ & $\mbot$ & $\mand$ & $\mor$ & $\mdrarr$ & \mc{1}{c|}{$\mrarr$} & \mc{1}{|c}{$\aatop$} & $\abot$ & $\aand$ & $\aor$ & $(\adrarr)$ & $(\ararr)$ \\
\hline
\end{tabular}
\end{center}

\begin{itemize}
\item Structural and operational unary multi-type connectives:
\end{itemize}
\begin{center}
\begin{tabular}{|c|c|c|c|c|c|c|c|}
\hline
\mc{2}{|c|}{\fns $\mathsf{L} \to \mathsf{K_\oc}$} & \mc{2}{c|}{\fns $\mathsf{L} \to \mathsf{K_\wn}$} & \mc{2}{|c|}{\fns $\mathsf{K_\oc} \to \mathsf{L}$} & \mc{2}{c|}{\fns $\mathsf{K_\wn} \to \mathsf{L}$} \\
\hline
\mc{2}{|c|}{$\BCIRC$}               & \mc{2}{c|}{$\CIRCB$}               & \mc{2}{|c|}{$\WCIRC$} & \mc{2}{c|}{$\CIRCW$}           \\
\hline
$\phantom{\bbox}$ & $\bbox$ & $\diab$ & \mc{1}{c|}{$\phantom{\diab}$} & \mc{1}{|c}{$\wdia$} & $\phantom{\wdia}$ & $\phantom{\boxw}$ & $\boxw$ \\
\hline
\end{tabular}
\end{center}
The connectives $\bbox$,  $\diab$, $\wdia$ and $\boxw$ are interpreted  in heterogeneous algebras of appropriate signature as the maps $\iota$, $\gamma$, $e_\oc$ and $e_\wn$ respectively. Exponentials in the language D.LL are {\em defined}  as follows:
\begin{center}
\begin{tabular}{rcl}
$\oc A$         & $::=$ & $\wdia \bbox A$                          \\
$\wn A$        & $::=$ & $\boxw \diab A$                          \\
\end{tabular}
\end{center}
In what follows, we will omit the subscripts of the unary modalities.
\begin{itemize}
\item Structural and operational binary multi-type connectives:
\end{itemize}
\begin{center}
\begin{tabular}{|c|c|c|c|c|c|c|c|c|c|c|c|}
\hline
\mc{2}{|c|}{\fns $\mathsf{K_\oc} \times \mathsf{K_\wn} \to \mathsf{K_\wn}$} & \mc{2}{c|}{\fns $\mathsf{K_\oc} \times \mathsf{K_\wn} \to \mathsf{K_\wn}$} & \mc{2}{c|}{\fns $\mathsf{K_\wn} \times \mathsf{K_\wn} \to \mathsf{K_\oc}$} & \mc{2}{c|}{\fns $\mathsf{K_\wn} \times \mathsf{K_\oc} \to \mathsf{K_\oc}$} & \mc{2}{c|}{\fns $\mathsf{K_\wn} \times \mathsf{K_\oc} \to \mathsf{K_\oc}$} & \mc{2}{c|}{\fns $\mathsf{K_\oc} \times \mathsf{K_\oc} \to \mathsf{K_\wn}$}        \\
\hline
\mc{2}{|c|}{\ \,$\KKBAND$}                  & \mc{2}{c|}{$\KKRARR$}                         & \mc{2}{c|}{$\KKLARR$} & \mc{2}{c|}{\ \,$\KKOR$}                  & \mc{2}{c|}{$\KKBARR$}                           & \mc{2}{c|}{$\KKBLARR$}\\
\hline
$\ (\KKBand)\ $ & $\phantom{(\KKBand)}$ & $\phantom{(\KKrarr)}$ & $(\KKrarr)$ & $\phantom{(\KKlarr)}$ & $(\KKlarr)$ & $\ \phantom{(\KKor)\ }$ & $(\KKor)$ & $(\KKBrarr)$ & $\phantom{(\KKBrarr)}$ & $(\KKBlarr)$ & $\phantom{(\KKBlarr)}$ \\
\hline
\end{tabular}
\end{center}

\subsection{Rules}
\label{ssec:Rules}

In what follows, structures of type $\mathsf{Linear}$ are denoted by the variables $X, Y, Z$, and $W$; structures of type $\mathsf{\oc\text{-}Kernel}$ are denoted by the variables $\Gamma, \Delta, \Theta$, and $\Lambda$; structures of type $\mathsf{\wn\text{-}Kernel}$ are denoted by the variables $\Pi, \Sigma, \Psi$, and $\Omega$. With these stipulations, in the present subsection we  omit the subscripts of  pure  $\mathsf{Kernel}$-type structural connectives and  unary multi-type structural connectives.


\subsubsection*{Basic intuitionistic linear environment}
\label{sssec:DILL}

\begin{itemize}
\item Identity and cut rules
\end{itemize}
\begin{center}
\begin{tabular}{rl}
\AXC{\phantom{$p \fCenter p$}}
\LeftLabel{\fns $Id_{\mathsf{\,L}}$}
\UI$p \fCenter p$
\DisplayProof
 &
\AX$X \fCenter A$
\AX$A \fCenter Y$
\RightLabel{\fns $Cut_{\mathsf{\,L}}$}
\BI$X \fCenter Y$
\DisplayProof
 \\
 & \\
\AX$\Gamma \fCenter \alpha$
\AX$\alpha \fCenter \Delta$
\LeftLabel{\fns $Cut_\oc$}
\BI$\Gamma \fCenter \Delta$
\DisplayProof
 &
\AX$\Pi \fCenter \xi$
\AX$\xi \fCenter \Sigma$
\RightLabel{\fns $Cut_\wn$}
\BI$\Pi \fCenter \Sigma$
\DisplayProof
 \\
\end{tabular}
\end{center}

\begin{itemize}
\item Pure $\mathsf{Linear}$-type display  rules
\end{itemize}
\begin{center}
\begin{tabular}{rl}
\AX$X \AANDOR Y \fCenter Z$
\LeftLabel{\fns res$_a$}
\doubleLine
\UI$Y \fCenter X \AARR Z$
\DisplayProof
 &
\AX$X \fCenter Y \AANDOR Z$
\RightLabel{\fns res$_a$}
\doubleLine
\UI$Y \AARR X \fCenter Z$
\DisplayProof
 \\
 & \\
\AX$X \MANDOR Y \fCenter Z$
\LeftLabel{\fns res$_m$}
\doubleLine
\UI$Y \fCenter X \MARR Z$
\DisplayProof
 &
\AX$X \fCenter Y \MANDOR Z$
\RightLabel{\fns res$_m$}
\doubleLine
\UI$Y \MARR X \fCenter Z$
\DisplayProof
 \\
\end{tabular}
\end{center}

\begin{itemize}
\item Pure $\mathsf{Kernel}$-type display rules
\end{itemize}
\begin{center}
\begin{tabular}{rl}
\AX$\Gamma \KANDORK \Delta \fCenter \Theta$
\doubleLine
\LeftLabel{\fns res$_\oc$}
\UI$\Delta \fCenter \Gamma \KARRK \Theta$
\DisplayProof
 &
\AX$\Theta \fCenter\Gamma \KANDORK \Delta$
\doubleLine
\RightLabel{\fns res$_\oc$}
\UI$\Gamma \KARRK \Theta \fCenter  \Delta$
\DisplayProof
 \\

 &\\

\AX$\Pi \KANDORK \Sigma \fCenter \Psi$
\doubleLine
\LeftLabel{\fns res$_\wn$}
\UI$\Sigma \fCenter \Pi \KARRK \Psi$
\DisplayProof
 &
\AX$\Psi\fCenter \Pi \KANDORK \Sigma $
\doubleLine
\RightLabel{\fns res$_\wn$}
\UI$\Pi \KARRK \Psi \fCenter \Sigma$
\DisplayProof
 \\
 \end{tabular}
 \end{center}

\begin{itemize}
\item Multi-type display  rules
\end{itemize}
\begin{center}
\begin{tabular}{rl}
\AX$\Gamma \fCenter \BCIRCB X$
\LeftLabel{\fns adj$_{\oc \mathsf{L}}$}
\doubleLine
\UI$\WCIRCW \Gamma \fCenter X$
\DisplayProof
 &
\AX$X \fCenter \WCIRCW \Pi$
\RightLabel{\fns adj$_{\wn \mathsf{L}}$}
\doubleLine
\UI$\BCIRCB X \fCenter \Pi$
\DisplayProof
 \\
 \end{tabular}
 \end{center}

\begin{itemize}
\item Pure $\mathsf{Linear}$-type structural rules
\end{itemize}
\begin{center}
\begin{tabular}{@{}r@{}l@{}}


\!\!\!\!\!\!
\begin{tabular}{rl}
\mc{2}{c}{additive} \\
 & \\
\AX$X \fCenter Y$
\doubleLine
\LeftLabel{\fns $\ATOPBOT$}
\UI$X \AANDOR \ATOPBOT \fCenter Y$
\DisplayProof
 &
\AX$X \fCenter Y$
\doubleLine
\RightLabel{\fns $\ATOPBOT$}
\UI$X \fCenter Y \AANDOR \ATOPBOT$
\DisplayProof
 \\

 & \\

\AX$X \AANDOR Y \fCenter Z$
\LeftLabel{\fns $E_a$}
\UI$Y \AANDOR X \fCenter Z $
\DisplayProof
 &
\AX$X \fCenter Y \AANDOR Z$
\RightLabel{\fns $E_a$}
\UI$X \fCenter Z \AANDOR Y$
\DisplayProof \\

 & \\

\AX$X \AANDOR (Y \AANDOR Z) \fCenter W$
\doubleLine
\LeftLabel{\fns $A_a$}
\UI$(X \AANDOR Y) \AANDOR Z \fCenter W $
\DisplayProof
 &
\AX$X \fCenter (Y \AANDOR Z) \AANDOR W$
\RightLabel{\fns $A_a$}
\doubleLine
\UI$X \fCenter Y \AANDOR (Z \AANDOR W)$
\DisplayProof
 \\

 & \\

\AX$X \fCenter Y$
\LeftLabel{\fns $W_a$}
\UI$X \AANDOR Z \fCenter Y$
\DisplayProof
 &
\AX$X \fCenter Y$
\RightLabel{\fns $W_a$}
\UI$X \fCenter Y \AANDOR Z$
\DisplayProof
 \\

 & \\

\AX$X \AANDOR X \fCenter Y$
\LeftLabel{\fns $C_a$}
\UI$X \fCenter Y$
\DisplayProof
 &
\AX$Y \fCenter X \AANDOR X$
\RightLabel{\fns $C_a$}
\UI$Y \fCenter X$
\DisplayProof
 \\

 & \\

\AX$(X \AARR Y) \AANDOR Z \fCenter W$
\LeftLabel{\fns Gri$_a$}
\UI$X \AARR (Y \AANDOR Z) \fCenter W$
\DisplayProof
 &
\AX$X \fCenter (Y \AARR Z) \AANDOR W$
\RightLabel{\fns Gri$_a$}
\UI$X \fCenter Y \AARR (Z \AANDOR W)$
\DisplayProof
 \\

\end{tabular}

 &

\begin{tabular}{rl}
\mc{2}{c}{multiplicative} \\
 & \\
\AX$X \fCenter Y$
\doubleLine
\LeftLabel{\fns $\MTOPBOT$}
\UI$\MTOPBOT \MANDOR X \fCenter Y$
\DisplayProof
 &
\AX$X \fCenter Y$
\doubleLine
\RightLabel{\fns $\MTOPBOT$}
\UI$X \fCenter Y \MANDOR \MTOPBOT$
\DisplayProof
 \\

 & \\

\AX$X \MANDOR Y \fCenter Z$
\LeftLabel{\fns $E_m$}
\UI$Y \MANDOR X \fCenter Z $
\DisplayProof
 &
\AX$X \fCenter Y \MANDOR Z$
\RightLabel{\fns $E_m$}
\UI$X \fCenter Z \MANDOR Y$
\DisplayProof \\

 & \\

\AX$X \MANDOR (Y \MANDOR Z) \fCenter W$
\doubleLine
\LeftLabel{\fns $A_m$}
\UI$(X \MANDOR Y) \MANDOR Z \fCenter W $
\DisplayProof
 &
\AX$X \fCenter (Y \MANDOR Z) \MANDOR W$
\RightLabel{\fns $A_m$}
\doubleLine
\UI$X \fCenter Y \MANDOR (Z \MANDOR W)$
\DisplayProof
 \\

 & \\
\phantom{
\AX$X \fCenter Y$
\LeftLabel{\fns $W_m$}
\UI$X \MANDOR \WCIRCW \Gamma \fCenter Y$
\DisplayProof}
 & 
\phantom{
\AX$X \fCenter Y$
\RightLabel{\fns $W_m$}
\UI$X \fCenter Y \MANDOR \WCIRCW \Gamma$
\DisplayProof}
 \\

 & \\

\phantom{
\AX$\WCIRCW \Gamma \MANDOR \WCIRCW \Gamma \fCenter Y$
\LeftLabel{\fns $C_m$}
\UI$\WCIRCW \Gamma \fCenter Y$
\DisplayProof}
 & 
\phantom{
\AX$X \fCenter \WCIRCW Y \MANDOR \WCIRCW Y$
\RightLabel{\fns $C_m$}
\UI$X \fCenter \WCIRCW Y$
\DisplayProof}
 \\

 & \\

\AX$(X \MARR Y) \MANDOR Z \fCenter W$
\LeftLabel{\fns Gri$_m$}
\UI$X \MARR (Y \MANDOR Z) \fCenter W$
\DisplayProof
 &
\AX$X \fCenter (Y \MARR Z) \MANDOR W$
\RightLabel{\fns Gri$_m$}
\UI$X \fCenter Y \MARR (Z \MANDOR W)$
\DisplayProof
 \\

\end{tabular}

 \\
\end{tabular}
\end{center}

\begin{itemize}
\item Pure $\mathsf{Kernel}$-type structural rules
\end{itemize}
\begin{center}
\begin{tabular}{rl}

\mc{1}{c}{$\mathsf{\oc\textrm{-}Kernel}$} & \mc{1}{c}{\!\!\!\!$\mathsf{\wn\textrm{-}Kernel}$} \\
 & \\
\!\!\!\!\!
\begin{tabular}{rl}
\AX$\Gamma \fCenter \Delta$
\doubleLine
\LeftLabel{\fns $\KTOPBOT$}
\UI$\KTOPBOTK \KANDORK \Gamma \fCenter \Delta$
\DisplayProof
 &
\AX$\Gamma \fCenter \Delta$
\doubleLine
\RightLabel{\fns $\KTOPBOT$}
\UI$\Gamma \fCenter \Delta \KANDORK \KTOPBOTK$
\DisplayProof
 \\

 & \\

\AX$\Gamma \KANDORK \Delta \fCenter \Lambda$
\LeftLabel{\fns $E_\oc$}
\UI$\Delta \KANDORK \Gamma \fCenter \Lambda$
\DisplayProof
 &
\AX$\Gamma \fCenter \Delta \KANDORK \Sigma$
\RightLabel{\fns $E_\oc$}
\UI$\Gamma \fCenter \Sigma \KANDORK \Delta$
\DisplayProof
 \\

 & \\

\AX$\Gamma \KANDORK (\Delta \KANDORK \Theta) \fCenter \Lambda$
\doubleLine
\LeftLabel{\fns $A_\oc$}
\UI$(\Gamma \KANDORK \Delta) \KANDORK \Theta \fCenter \Lambda$
\DisplayProof
 &
\AX$\Gamma \fCenter (\Delta \KANDORK \Sigma) \KANDORK \Lambda$
\RightLabel{\fns $A_\oc$}
\doubleLine
\UI$\Gamma \fCenter \Delta \KANDORK (\Sigma \KANDORK \Lambda)$
\DisplayProof
 \\

 & \\

\AX$\Gamma \fCenter \Delta$
\LeftLabel{\fns $W_\oc$}
\UI$\Gamma \KANDORK \Lambda \fCenter \Delta$
\DisplayProof
 &
\AX$\Gamma \fCenter \Delta$
\RightLabel{\fns $W_\oc$}
\UI$\Gamma \fCenter \Delta \KANDORK \Sigma$
\DisplayProof
 \\

 & \\

\AX$\Gamma \KANDORK \Gamma \fCenter \Delta$
\LeftLabel{\fns $C_\oc$}
\UI$\Gamma \fCenter \Delta$
\DisplayProof
 &
\AX$\Gamma \fCenter \Delta \KANDORK \Delta$
\RightLabel{\fns $C_\oc$}
\UI$\Gamma \fCenter \Delta$
\DisplayProof
 \\
\end{tabular}

 &

\!\!\!\!\!\!\!\!\!\!\!

\begin{tabular}{rl}
\AX$\Pi \fCenter \Sigma$
\doubleLine
\LeftLabel{\fns $\TOPBOTK$}
\UI$\KTOPBOTK \KANDORK \Pi \fCenter \Sigma$
\DisplayProof
 &
\AX$\Pi \fCenter \Sigma$
\doubleLine
\RightLabel{\fns $\TOPBOTK$}
\UI$\Pi \fCenter \Sigma \KANDORK \KTOPBOTK$
\DisplayProof
 \\

 & \\

\AX$\Pi \KANDORK \Sigma \fCenter \Psi$
\LeftLabel{\fns $E_\wn$}
\UI$\Sigma \KANDORK \Pi \fCenter \Psi$
\DisplayProof
 &
\AX$\Pi \fCenter \Sigma \KANDORK \Psi$
\RightLabel{\fns $E_\wn$}
\UI$\Pi \fCenter \Psi \KANDORK \Sigma$
\DisplayProof
 \\

 & \\

\AX$\Pi \KANDORK (\Sigma \KANDORK \Psi) \fCenter \Omega$
\doubleLine
\LeftLabel{\fns $A_\wn$}
\UI$(\Pi \KANDORK \Sigma) \KANDORK \Psi \fCenter \Omega$
\DisplayProof
 &
\AX$\Pi \fCenter (\Sigma \KANDORK \Psi) \KANDORK \Omega$
\RightLabel{\fns $A_\wn$}
\doubleLine
\UI$\Pi \fCenter \Sigma \KANDORK (\Psi \KANDORK \Omega)$
\DisplayProof
 \\

 & \\

\AX$\Pi \fCenter \Sigma$
\LeftLabel{\fns $W_\wn$}
\UI$\Pi \KANDORK \Psi \fCenter \Sigma$
\DisplayProof
 &
\AX$\Pi \fCenter \Sigma$
\RightLabel{\fns $W_\wn$}
\UI$\Pi \fCenter \Sigma \KANDORK \Psi$
\DisplayProof
 \\

 & \\

\AX$\Pi \KANDORK \Pi \fCenter \Sigma$
\LeftLabel{\fns $C_\wn$}
\UI$\Pi \fCenter \Sigma$
\DisplayProof
 &
\AX$\Pi \fCenter \Sigma \KANDORK \Sigma$
\RightLabel{\fns $C_\wn$}
\UI$\Pi \fCenter \Sigma$
\DisplayProof
 \\
\end{tabular}

 \\
\end{tabular}
\end{center}

\begin{itemize}
\item Multi-type structural rules
\end{itemize}
\begin{center}
\begin{tabular}{rl}
\AX$\WCIRCW \Gamma \MANDOR \WCIRCW \Delta \fCenter X$
\doubleLine
\LeftLabel{\fns reg / coreg$_{\oc \mathsf{L}}$}
\UI$\WCIRCW (\Gamma \KANDORK \Delta) \fCenter X$
\DisplayProof
 &
\AX$X \fCenter \WCIRCW \Pi \MANDOR \WCIRCW \Sigma$
\doubleLine
\RightLabel{\fns reg / coreg$_{\wn \mathsf{L}}$}
\UI$X \fCenter \WCIRCW (\Pi \KANDORK \Sigma)$
\DisplayProof
 \\
\end{tabular}
\end{center}

\begin{center}
\begin{tabular}{rl}
\AX$\MTOPBOT \fCenter X$
\doubleLine
\LeftLabel{\fns nec / conec$_{\oc \mathsf{L}}$}
\UI$\WCIRCW \KTOPBOTK \fCenter X$
\DisplayProof
 &
\AX$X \fCenter \MTOPBOT$
\doubleLine
\RightLabel{\fns nec / conec$_{\wn \mathsf{L}}$}
\UI$X \fCenter \WCIRCW \KTOPBOTK$
\DisplayProof
 \\
\end{tabular}
\end{center}

\begin{itemize}
\item Pure $\mathsf{Linear}$-type operational rules
\end{itemize}
\begin{center}
\begin{tabular}{@{}c@{}c@{}}

\!\!\!\!\!\!\!\!\!\!\!\!\!\!\!\!\!
\begin{tabular}{@{}rl@{}}

\mc{2}{c}{additive} \\

 & \\

\AXC{\phantom{$\bot \fCenter $}}
\LeftLabel{\fns $\abot$}
\UI$\abot \fCenter \ATOPBOT$
\DisplayProof
 &
\AX$X \fCenter \ATOPBOT$
\RightLabel{\fns $\abot$}
\UI$X \fCenter \abot$
\DisplayProof
 \\

 & \\

\AX$\ATOPBOT \fCenter X$
\LeftLabel{\fns $\aatop$}
\UI$\aatop \fCenter X$
\DisplayProof
 &
\AXC{\phantom{$ \fCenter \top$}}
\RightLabel{\fns $\aatop$}
\UI$\ATOPBOT \fCenter \aatop$
\DisplayProof
 \\

 & \\

\AX$A \AANDOR B \fCenter X$
\LeftLabel{\fns $\aand$}
\UI$A \aand B \fCenter X$
\DisplayProof
 &
\AX$X \fCenter A$
\AX$Y \fCenter B$
\RightLabel{\fns $\aand$}
\BI$X \AANDOR Y \fCenter A \aand B$
\DisplayProof
\ \, \\

 & \\
\AX$A \fCenter X$
\AX$B \fCenter Y$
\LeftLabel{\fns $\aor$}
\BI$A \aor B \fCenter X \AANDOR Y$
\DisplayProof
 &
\AX$X \fCenter A \AANDOR B$
\RightLabel{\fns $\aor$}
\UI$X \fCenter A \aor B $
\DisplayProof
 \\

 & \\
\ \,
\phantom{
\AX$A \AARR B\fCenter Z$
\LeftLabel{\fns $(\adrarr)$}
\UI$A \adrarr B \fCenter Z$
\DisplayProof}
 &
\phantom{
\AX$A \fCenter X$
\AX$ Y \fCenter B$
\RightLabel{\fns $(\adrarr)$}
\BI$X \AARR Y \fCenter A \adrarr B $
\DisplayProof}
 \\
\end{tabular}

 &
\!\!\!\!\!
\begin{tabular}{@{}rl@{}}
\mc{2}{c}{multiplicative} \\

 & \\

\AXC{\phantom{$\bot \fCenter $}}
\LeftLabel{\fns $\mbot$}
\UI$\mbot \fCenter \MTOPBOT$
\DisplayProof
 &
\AX$X \fCenter \MTOPBOT$
\RightLabel{\fns $\mbot$}
\UI$X \fCenter \mbot$
\DisplayProof
 \\

 & \\

\AX$\ATOPBOT \fCenter X$
\LeftLabel{\fns $\mtop$}
\UI$\mtop \fCenter X$
\DisplayProof
 &
\AXC{\phantom{$ \fCenter \top$}}
\RightLabel{\fns $\mtop$}
\UI$\MTOPBOT \fCenter \mtop$
\DisplayProof
 \\

 & \\

\AX$A \MANDOR B \fCenter X$
\LeftLabel{\fns $\mand$}
\UI$A \mand B \fCenter X$
\DisplayProof
 &
\AX$X \fCenter A$
\AX$Y \fCenter B$
\RightLabel{\fns $\mand$}
\BI$X \MANDOR Y \fCenter A \mand B$
\DisplayProof
\ \, \\

 & \\
\AX$A \fCenter X$
\AX$B \fCenter Y$
\LeftLabel{\fns $\mor$}
\BI$A \mor B \fCenter X \MANDOR Y$
\DisplayProof
 &
\AX$X \fCenter A \MANDOR B$
\RightLabel{\fns $\mor$}
\UI$X \fCenter A \mor B $
\DisplayProof
 \\

 & \\
\ \,
\AX$X \fCenter A$
\AX$B \fCenter Y$
\LeftLabel{\fns $\mrarr$}
\BI$A \mrarr B \fCenter X \MARR Y$
\DisplayProof
 &
\AX$Z \fCenter A \MARR B$
\RightLabel{\fns $\mrarr$}
\UI$Z \fCenter A \mrarr B $
\DisplayProof
 \\
\end{tabular}
 \\
\end{tabular}
\end{center}

\begin{itemize}
\item Operational rules for multi-type unary operators
\end{itemize}
\begin{center}
\begin{tabular}{rlrl}


\mc{2}{c}{$\mathsf{K_\oc} \to \mathsf{L}$} & \mc{2}{c}{$\mathsf{L} \to \mathsf{K_\oc}$}\\

 & \\
\AX$\WCIRCW \alpha \fCenter X$
\LeftLabel{\fns $\wdia$}
\UI$\wdiaw \alpha \fCenter X$
\DisplayProof
 &
\AX$\Gamma \fCenter \alpha$
\RightLabel{\fns $\wdia$}
\UI$\WCIRCW \Gamma \fCenter \wdiaw \alpha$
\DisplayProof
 &
\AX$\Gamma \fCenter \BCIRCB A$
\LeftLabel{\fns $\bbox$}
\UI$\Gamma \fCenter \bboxb A$
\DisplayProof
 &
\AX$A \fCenter X$
\RightLabel{\fns $\bbox$}
\UI$\bboxb A \fCenter \BCIRCB X$
\DisplayProof
 \\

 & & & \\

\mc{2}{c}{$\mathsf{L} \to \mathsf{K_\wn}$} & \mc{2}{c}{$\mathsf{K_\wn} \to \mathsf{L}$} \\

 & \\
\AX$\BCIRCB A \fCenter \Pi$
\LeftLabel{\fns $\diab$}
\UI$\bdiab A \fCenter \Pi$
\DisplayProof
 &
\AX$X \fCenter A$
\RightLabel{\fns $\diab$}
\UI$\BCIRCB X \fCenter \bdiab A$
\DisplayProof
 &
\AX$X \fCenter \WCIRCW \xi$
\LeftLabel{\fns $\boxw$}
\UI$X \fCenter \wboxw \xi$
\DisplayProof
 &
\AX$\xi \fCenter \Pi$
\RightLabel{\fns $\boxw$}
\UI$\wboxw \xi \fCenter \WCIRCW \Pi$
\DisplayProof
 \\

\end{tabular}
\end{center}


\subsubsection*{Co-intuitionistic and bi-intuitionistic variants}
The calculus for the bi-intuitionistic  (resp.~co-intuitionistic) variant of linear logic with exponentials is defined by adding (resp.~replacing the introduction rules for $\mrarr$ with) the following introduction rules in  the calculus given above:
\begin{center}
\begin{tabular}{rl}
\AX$A \MARR B \fCenter Z$
\LeftLabel{\fns $\mdrarr$}
\UI$A \mdrarr B \fCenter Z$
\DisplayProof
 &
\AX$A \fCenter X$
\AX$ Y \fCenter B$
\RightLabel{\fns $\mdrarr$}
\BI$X \MARR Y \fCenter A \mdrarr B $
\DisplayProof
 \\
\end{tabular}
\end{center}

\subsubsection*{Paired variants}
Paired variants of each calculus given above (i.e.~intuitionistic, co-intuitionistic, bi-intuitionistic) are defined by adding one, the other or both rows of display postulates below (depending on whether one, the other or both binary maps $\KKrarr$ and $\KKBrarr$ are part of the definition of the heterogeneous algebras associated with the given linear logic), and, accordingly, one, the other or both pairs of FS/co-FS rules, corresponding to the defining properties of the maps $\KKBrarr$ and $\KKrarr$, respectively.

\begin{itemize}
\item Display postulates for  multi-type binary operators
\end{itemize}
\begin{center}
\begin{tabular}{rl}
  \AX$\Gamma \KKBAND \Pi \fCenter \Sigma$
\doubleLine
\LeftLabel{\fns res$_{\oc\wn}$}
\UI$\Pi \fCenter \Gamma \KKRARR \Sigma$
\DisplayProof
 &
\AX$\Gamma \KKBAND \Pi \fCenter \Sigma$
\doubleLine
\RightLabel{\fns res$_{\oc\wn}$}
\UI$\Gamma  \fCenter  \Sigma \KKLARR \Pi$
\DisplayProof
 \\
 & \\
 \AX$\Gamma \fCenter \Pi \KKOR \Delta$
\doubleLine
\LeftLabel{\fns res$_{\wn\oc}$}
\UI$\Pi \KKBARR \Gamma\fCenter \Delta$
\DisplayProof
 &
\AX$\Gamma \fCenter \Pi \KKOR \Delta$
\doubleLine
\RightLabel{\fns res$_{\wn\oc}$}
\UI$\Gamma \KKBLARR \Delta \fCenter  \Pi$
\DisplayProof
 \\
\end{tabular}
\end{center}

\begin{itemize}
\item Structural rules corresponding to the pairing axioms
\end{itemize}
\begin{center}
\begin{tabular}{rl}
\AX$\WCIRCW \Pi \MARR \WCIRCW \Gamma \fCenter X$
\doubleLine
\LeftLabel{\fns FS / coFS$_{\wn\oc \mathsf{L}}$}
\UI$\WCIRCW (\Pi \KKBARR \Gamma) \fCenter X$
\DisplayProof
 &
\AX$X \fCenter \WCIRCW \Gamma \MARR \WCIRCW \Pi$
\doubleLine
\RightLabel{\fns FS / coFS$_{\oc\wn \mathsf{L}}$}
\UI$X \fCenter \WCIRCW (\Gamma \KKRARR \Pi)$
\DisplayProof
 \\
\end{tabular}
\end{center}

\subsubsection*{Classical linear variants}
The propositional linear base of each calculus introduced above turns classical by adding (one or the other of) the following rules:

\begin{center}
\begin{tabular}{rl}
\AX$X \MARR (Y \MANDOR Z) \fCenter W$
\LeftLabel{\fns coGri$_m$}
\UI$(X \MARR Y) \MANDOR Z \fCenter W$
\DisplayProof
 &
\AX$X \fCenter Y \MARR (Z \MANDOR W)$
\RightLabel{\fns coGri$_m$}
\UI$X \fCenter (Y \MARR Z) \MANDOR W$
\DisplayProof
 \\
\end{tabular}
\end{center}
adding which, not only the sequents $(A\mrarr \mbot)\mrarr \mbot\dashv\vdash A$ become derivable, but also $(A\mdrarr \mbot)\mdrarr \mbot\dashv\vdash A$.

\subsubsection*{Relevant and affine variants}
Relevant and affine variants of each calculus given above are defined by adding one or the other row of structural rules below:

\begin{center}
\begin{tabular}{rl}
\AX$X \MANDOR X \fCenter Y$
\LeftLabel{\fns C$_m$}
\UI$X \fCenter Y$
\DisplayProof
 &
\AX$X \fCenter Y \MANDOR Y$
\RightLabel{\fns C$_m$}
\UI$X \fCenter Y$
\DisplayProof
 \\

 & \\

\AX$X \fCenter Y$
\LeftLabel{\fns W$_m$}
\UI$X \MANDOR Z \fCenter Y$
\DisplayProof
 &
\AX$X \fCenter Y$
\RightLabel{\fns W$_m$}
\UI$X \fCenter Y \MANDOR Z$
\DisplayProof
 \\
\end{tabular}
\end{center}
Adding both rows would erase the distinction between the multiplicative and the additive behaviour.

\subsection{Linear negations as primitive connectives}
In the present paper, we have taken the intuitionistic linear setting as basic and, as is usual in this setting, linear negation (and dual linear negation) are defined connectives. Namely, linear negation $A^\mbot$ is defined as $A \mrarr \mbot$ and dual linear negation $A^\mtop$ as $A \mdrarr \mtop$ (cf.~\cite{Gore1998}). However, one can alternatively stipulate that negation(s) are primitive.
In this case, the following pure $\mathsf{Linear}$-type structural and operational connectives need to be added to the language of D.LL:
\begin{center}
\begin{tabular}{|c|c|}
\hline
\mc{2}{|c|}{$\ast$} \\
\hline
$(\ )^{\mtop}$ & $(\ )^{\mbot}$ \\
\hline
\end{tabular}
\end{center}

In the present subsection we discuss this alternative.

\subsubsection*{(Bi-)intuitionistic linear negations}

\begin{itemize}
\item Display postulates for linear negations
\end{itemize}
\begin{center}
\begin{tabular}{rl}
\AX$\ast X \fCenter Y$
\LeftLabel{\fns Gal$_m$}
\UI$\ast Y \fCenter X$
\DisplayProof
 &
\AX$X \fCenter \ast Y $
\RightLabel{\fns Gal$_m$}
\UI$Y \fCenter \ast Y$
\DisplayProof
 \\
\end{tabular}
\end{center}
\begin{itemize}
\item Operational rules  for linear negations
\end{itemize}
\begin{center}
\begin{tabular}{rlrl}
\AX$X \fCenter A$
\UI$A\mneg \fCenter \MNEG X$
\DisplayProof
 &
\AX$X \fCenter \MNEG A$
\UI$X \fCenter A\mneg$
\DisplayProof
 &
\AX$A \fCenter X$
\UI$\MNEG X \fCenter A^\mtop$
\DisplayProof
 &
\AX$\MNEG A \fCenter X$
\UI$A^\mtop \fCenter X$
\DisplayProof
 \\
\end{tabular}
\end{center}
In the calculus extended with the rules above, and the following rules
\begin{center}
\begin{tabular}{cc}
\AX$X \MARR \MTOPBOT \fCenter Y $
\doubleLine
\LeftLabel{\fns coimp - left neg}
\UI$\ast X \fCenter  Y$
\DisplayProof
&
\AX$X \fCenter Y \MARR \MTOPBOT$
\doubleLine
\RightLabel{\fns imp - right neg}
\UI$X \fCenter \ast Y$
\DisplayProof
\\
 \end{tabular}
\end{center}
 the sequents $A \mrarr \mbot \dashv\vdash A\mneg$ and $A \mdrarr \mtop \dashv\vdash A^\mtop$ are then derivable as follows

\begin{center}
\begin{tabular}{cc}
\AX$A \fCenter A$
\AX$\mbot \fCenter \MTOPBOT$
\BI$A \mrarr \mbot \fCenter A \MARR \MTOPBOT$
\RightLabel{\fns imp - right neg}
\UI$A \mrarr \mbot \fCenter \ast A$
\UI$A \mrarr \mbot \fCenter A^\mbot$
\DisplayProof
 &
\AX$A \fCenter A$
\UI$A^\mbot \fCenter \ast A$
\RightLabel{\fns right neg - imp}
\UI$A^\mbot \fCenter A \MARR \MTOPBOT$
\UI$A \MANDOR A^\mbot \fCenter \MTOPBOT$
\UI$A \MANDOR A^\mbot \fCenter \mbot$
\UI$A^\mbot \fCenter A \MARR \mbot$
\UI$A^\mbot \fCenter A \mrarr \mbot$
\DisplayProof
 \\
\end{tabular}
\end{center}

\begin{center}
\begin{tabular}{ccc}
\AX$A \fCenter A$
\UI$\ast A \fCenter A^\mtop$
\UI$\ast A \MANDOR \MTOPBOT \fCenter A^\mtop$
\LeftLabel{\fns left neg - coimp}
\UI$A \MARR \MTOPBOT \fCenter A^\mtop$
\UI$\MTOPBOT \fCenter A \MANDOR A^\mtop$
\UI$\mtop \fCenter A \MANDOR A^\mtop$
\UI$A \MARR \mtop \fCenter A^\mtop$
\UI$A \mdrarr \mtop \fCenter A^\mtop$
\DisplayProof
 &
\AX$A \fCenter A$
\AX$\MTOPBOT \fCenter \mtop$
\BI$A \MARR \MTOPBOT \fCenter A \mdrarr \mtop$
\LeftLabel{\fns coimp - left neg}
\UI$\ast A \fCenter A \mdrarr \mtop$
\UI$A^\mtop \fCenter A \mdrarr \mtop$
\DisplayProof
 \\
\end{tabular}
\end{center}

\subsubsection*{Paired variants}

In paired linear logics, either one or both heterogeneous negations $\alpha^{\botk} \in \mathsf{K}_\wn$ and $\xi^{\ktop} \in \mathsf{K}_\oc$ can be defined as $\alpha \KKrarr \botk$ and $\xi\KKBrarr \ktop$ respectively (and, at the structural level, also the `symmetric' negations $^{\ktop}\alpha \in \mathsf{K}_\wn$ and $^{\botk}\xi \in \mathsf{K}_\oc$ defined as $\ktop \KKBlarr \alpha $ and $\botk\KKlarr \xi$ respectively). When these negations are taken as primitive, the following heterogeneous structural and operational connectives need to be added to the language of D.LL:

\begin{center}
\begin{tabular}{|c|c|c|c|c|c|c|c|}
\hline
\mc{2}{|c|}{\fns $\mathsf{K_\oc} \to \mathsf{K_\wn}$} & \mc{2}{c|}{\fns $\mathsf{K_\wn} \to \mathsf{K_\oc}$} & \mc{2}{c|}{\fns $\mathsf{K_\oc} \to \mathsf{K_\wn}$} & \mc{2}{c|}{\fns $\mathsf{K_\wn} \to \mathsf{K_\oc}$} \\
\hline
\mc{2}{|c|}{$ _\wn\circledast$} & \mc{2}{c|}{$\circledast_\oc$} & \mc{2}{c|}{$\circledast_\wn$} & \mc{2}{c|}{$ _\oc\circledast$}\\
\hline
$^{\ktop}(\ )$ & $\phantom{(\ )^{\ktop}}$ & $(\ )^{\ktop}$ & $\phantom{(\ )^{\ktop}}$ & $\phantom{(\ )^{\botk}}$ & $(\ )^{\botk}$ & $\phantom{(\ )^{\botk}}$ & $^{\botk}(\ )$ \\
\hline
\end{tabular}
\end{center}

\begin{itemize}
\item Display postulates for heterogeneous negations
\end{itemize}
\begin{center}
\begin{tabular}{rl}
\AX$\circledast_\oc \Sigma   \fCenter \Gamma$
\LeftLabel{\fns Gal$_\oc\wn$}
\doubleLine
\UI$ _\wn\circledast \Gamma \fCenter \Sigma$
\DisplayProof
 &
\AX$\Sigma \fCenter \circledast_\wn \Gamma $
\RightLabel{\fns Gal$_\wn\oc$}
\doubleLine
\UI$\Gamma \fCenter _\oc\circledast \Sigma$
\DisplayProof
 \\
\end{tabular}
\end{center}
As usual, we will drop the subscripts, since the reading is unambiguous.

\begin{itemize}
\item Operational rules  for heterogeneous negations
\end{itemize}

\begin{center}
\begin{tabular}{rlrl}
\AX$\Gamma \fCenter \alpha$
\UI$\alpha^{\botk} \fCenter \circledast \Gamma$
\DisplayProof
 &
\AX$\Gamma \fCenter \circledast \alpha$
\UI$\Gamma \fCenter \alpha^{\botk}$
\DisplayProof
 &
\AX$\xi \fCenter \Pi$
\UI$\circledast \Pi \fCenter \xi^{\ktop}$
\DisplayProof
 &
\AX$\circledast \xi \fCenter \Pi$
\UI$\xi^{\ktop} \fCenter \Pi$
\DisplayProof
 \\
\end{tabular}
\end{center}

In the calculus extended with the rules above, and the following rules
\begin{center}
\begin{tabular}{cc}
\AX$\Pi \KKBARR \KTOPBOTK \fCenter \Gamma $
\doubleLine
\LeftLabel{\fns dual het neg}
\UI$\circledast \Pi \fCenter  \Gamma$
\DisplayProof
&
\AX$\Pi \fCenter \Gamma \KKRARR \KTOPBOTK$
\doubleLine
\RightLabel{\fns het neg}
\UI$\Pi \fCenter \circledast \Gamma$
\DisplayProof
\\
 \end{tabular}
\end{center}
  the sequents $\alpha\KKrarr\botk \dashv\vdash \alpha^{\botk}$ and $\xi\KKBrarr\ktop \dashv\vdash \xi^{\ktop}$ become derivable (we omit the corresponding derivations). In this language, it becomes possible to formulate the following alternative versions of the FS/coFS rules (which are equivalent to them if the linear implications are defined connectives):
\begin{center}
\begin{tabular}{rl}
\AX$\ast \WCIRCW \Pi \MANDOR \WCIRCW \Gamma \fCenter X$
\doubleLine
\LeftLabel{\fns ${\textrm{P}}$ / co${\textrm{P}}_{\wn\oc \mathsf{L}}$}
\UI$\WCIRCW (\circledast \Pi \KANDORK \Gamma) \fCenter X$
\DisplayProof
 &
\AX$X \fCenter \ast \WCIRCW \Gamma \MANDOR \WCIRCW \Pi$
\doubleLine
\RightLabel{\fns ${\textrm{P}}$ / co${\textrm{P}}_{\oc\wn \mathsf{L}}$}
\UI$X \fCenter \WCIRCW (\circledast \Gamma \KANDORK \Pi)$
\DisplayProof
 \\
\end{tabular}
\end{center}

\subsubsection*{Interdefinable exponentials}
A connection which is at least as strong as the one captured by the P/coP rules is encoded in the following rules:


\begin{center}
\begin{tabular}{rl}
\AX$\ast \WCIRCW \Gamma \fCenter X$
\LeftLabel{\fns swap-in / -out}
\doubleLine
\UI$\WCIRCW \circledast \Gamma \fCenter X$
\DisplayProof
 &
\AX$X \fCenter \ast \WCIRCW \Pi$
\RightLabel{\fns swap-in / -out}
\doubleLine
\UI$X \fCenter \WCIRCW \circledast \Pi$
\DisplayProof
 \\

\end{tabular}
\end{center}

Indeed, in the presence of the rules above, P and coP are derivable as follows:

\begin{center}
\begin{tabular}{cc}
\AX$\ast \WCIRCW \Pi \MANDOR \WCIRCW \Gamma \fCenter X$
\UI$\WCIRCW \Gamma \MANDOR \ast \WCIRCW \Pi \fCenter X$
\UI$\ast \WCIRCW \Pi \fCenter \WCIRCW \Gamma \MARR X$
\LeftLabel{\fns swap-in}
\UI$\WCIRCW \circledast \Pi \fCenter \WCIRCW \Gamma \MARR X$
\UI$\WCIRCW \Gamma \MANDOR \WCIRCW \circledast \Pi \fCenter X$
\UI$\WCIRCW \circledast \Pi \MANDOR \WCIRCW \Gamma \fCenter X$
\LeftLabel{\fns reg$_{\oc \mathsf{L}}$}
\UI$\WCIRCW (\circledast \Pi \KANDORK \Gamma) \fCenter X$
\DisplayProof
 &
\AX$\WCIRCW (\circledast \Pi \KANDORK \Gamma) \fCenter X$
\LeftLabel{\fns coreg$_{\oc \mathsf{L}}$}
\UI$\WCIRCW \circledast \Pi \MANDOR \WCIRCW \Gamma \fCenter X$
\UI$\WCIRCW \Gamma \MANDOR \WCIRCW \circledast \Pi \fCenter X$
\UI$\WCIRCW \circledast \Pi \fCenter \WCIRCW \Gamma \MARR X$
\LeftLabel{\fns swap-out}
\UI$\ast \WCIRCW \Pi \fCenter \WCIRCW \Gamma \MARR X$
\UI$\WCIRCW \Gamma \MANDOR \ast \WCIRCW \Pi \fCenter X$
\UI$\ast \WCIRCW \Pi \MANDOR \WCIRCW \Gamma \fCenter X$
\DisplayProof
 \\
\end{tabular}
\end{center}
Using swap-out and swap-in one can prove that the following sequents are derivable:

\begin{center}
\begin{tabular}{cc}
\AX$A \fCenter A$
\UI$\ast A \fCenter A^{\mtop}$
\UI$\BCIRCB \ast A \fCenter \bdiab A^{\mtop}$
\UI$\ast A \fCenter \WCIRCW \bdiab A^{\mtop}$
\UI$\ast \WCIRCW \bdiab A^{\mtop} \fCenter A$
\LeftLabel{\fns swap-in}
\UI$\WCIRCW \circledast \bdiab A^{\mtop} \fCenter A$
\UI$\circledast \bdiab A^{\mtop} \fCenter \BCIRCB A$
\UI$\circledast \bdiab A^{\mtop} \fCenter \bboxb A$
\UI$\WCIRCW \circledast \bdiab A^{\mtop} \fCenter \wdiaw \bboxb A$
\LeftLabel{\fns def}
\UI$\WCIRCW \circledast \bdiab A^{\mtop} \fCenter \oc A$
\LeftLabel{\fns swap-out}
\UI$\ast \WCIRCW \bdiab A^{\mtop} \fCenter \oc A$
\UI$\ast \oc A \fCenter \WCIRCW \bdiab A^{\mtop}$
\UI$\ast \oc A \fCenter \wboxw \bdiab A^{\mtop}$
\RightLabel{\fns def}
\UI$\ast \oc A \fCenter \wn A^{\mtop}$
\UI$\ast \wn A^{\mtop} \fCenter \oc A$
\UI$(\wn A^{\mtop})^{\mtop} \fCenter \oc A$
\DisplayProof
 &
\AX$A \fCenter A$
\UI$A^{\mbot} \fCenter \ast A$
\UI$\bboxb A^{\mbot} \fCenter \BCIRCB \ast A$
\UI$\WCIRCW \bboxb A^{\mbot} \fCenter \ast A$
\UI$A \fCenter \ast \WCIRCW \bboxb A^{\mbot}$
\RightLabel{\fns swap-in}
\UI$A \fCenter \WCIRCW \circledast \bboxb A^{\mbot}$
\UI$\BCIRCB A \fCenter \circledast \bboxb A^{\mbot}$
\UI$\bdiab A \fCenter \circledast \bboxb A^{\mbot}$
\UI$\wboxw \bdiab A \fCenter \WCIRCW \circledast \bboxb A^{\mbot}$
\LeftLabel{\fns def}
\UI$\wn A \fCenter \WCIRCW \circledast \bboxb A^{\mbot}$
\RightLabel{\fns swap-out}
\UI$\wn A \fCenter \ast \WCIRCW \bboxb A^{\mbot}$
\UI$\WCIRCW \bboxb A^{\mbot} \fCenter \ast \wn A$
\UI$\wdiaw \bboxb A^{\mbot} \fCenter \ast \wn A$
\LeftLabel{\fns def}
\UI$\oc A^{\mbot} \fCenter \ast \wn A$
\UI$\wn A \fCenter \ast \oc A^{\mbot}$
\UI$\wn A \fCenter (\oc A^{\mbot})^{\mbot}$
\DisplayProof
\end{tabular}
\end{center}

\subsubsection*{Classical linear negations}
When negations are primitive, the classical linear propositional base can be captured by adding the following structural rules:
\begin{center}
\begin{tabular}{rl}
\mc{2}{c}{
\!\!\!\!\!\!\!\!\!\!\!\!\!\!\!\!\!\!\!\!\!\!\!\!\!\!\!\!\!\!
\AX$X \fCenter Y$
\LeftLabel{\fns pseudo contr}
\doubleLine
\UI$\ast Y \fCenter \ast X$
\DisplayProof}
 \\
 & \\
\AX$X \fCenter Y \MANDOR Z$
\LeftLabel{\fns left neg}
\doubleLine
\UI$\ast Y \MANDOR X \fCenter Z$
\DisplayProof
 &
\AX$X \MANDOR Y \fCenter Z$
\RightLabel{\fns right neg}
\doubleLine
\UI$Y \fCenter \ast X \MANDOR Z$
\DisplayProof
 \\
\end{tabular}
\end{center}
 The  rules above are the counterparts of the classical Grishin rules Gri$_m$ and coGri$_m$ in the language in which negation is primitive and the two linear implications are defined. In the calculus extended accordingly, the sequents $A \mrarr B \dashv\vdash A^{\mbot} \mor B$ and $A \mdrarr B \dashv\vdash A^{\mtop} \mand B$ are then derivable as follows:

\begin{center}
\begin{tabular}{cc}
\AX$A \fCenter A$
\AX$B \fCenter B$
\BI$A \mrarr B \fCenter A \MARR B$
\UI$A \MANDOR A \mrarr B \fCenter B$
\UI$A \mrarr B \fCenter \ast A \MANDOR B$
\UI$A \mrarr B \fCenter B \MANDOR \ast A$
\UI$B \MARR A \mrarr B \fCenter \ast A$
\UI$B \MARR A \mrarr B \fCenter A^{\mbot}$
\UI$A \mrarr B \fCenter B \MANDOR A^{\mbot}$
\UI$A \mrarr B \fCenter A^{\mbot} \MANDOR B$
\UI$A \mrarr B \fCenter A^{\mbot} \mor B$
\DisplayProof
 &
\AX$A \fCenter A$
\UI$A^\mbot \fCenter \ast A$
\AX$B \fCenter B$
\BI$A^\mbot \mor B \fCenter \ast A \MANDOR B$
\UI$A \MANDOR A^\mbot \mor B \fCenter B$
\UI$A^\mbot \mor B \fCenter A \MARR B$
\UI$A^\mbot \mor B \fCenter A \mrarr B$
\DisplayProof
 \\
\end{tabular}
\end{center}

\begin{center}
\begin{tabular}{ccc}
\AX$A \fCenter A$
\UI$\ast A \fCenter A^{\mtop}$
\AX$B \fCenter B$
\BI$\ast A \MANDOR B \fCenter A^{\mtop} \mand B$
\UI$B \fCenter A \MANDOR A^{\mtop} \mand B$
\UI$A \MARR B \fCenter A^{\mtop} \mand B$
\UI$A \mdrarr B \fCenter A^{\mtop} \mand B$
\DisplayProof
 &
\AX$A \fCenter A$
\AX$B \fCenter B$
\BI$A \MARR B \fCenter A \mdrarr B$
\UI$B \fCenter A \MANDOR A \mdrarr B$
\UI$\ast A \MANDOR B \fCenter A \mdrarr B$
\UI$B \MANDOR \ast A \fCenter A \mdrarr B$
\UI$\ast A \fCenter B \MARR A \mdrarr B$
\UI$A^{\mtop} \fCenter B \MARR A \mdrarr B$
\UI$B \MANDOR A^{\mtop} \fCenter A \mdrarr B$
\UI$A^{\mtop} \MANDOR B \fCenter A \mdrarr B$
\UI$A^{\mtop} \mand B \fCenter A \mdrarr B$
\DisplayProof
 \\
\end{tabular}
\end{center}

Moreover, left and right negation are interderivable:

\begin{center}
\begin{tabular}{cc}
\AX$A \fCenter A$
\UI$\ast A \fCenter \ast A$
\UI$A^\mtop \fCenter \ast A$
\UI$A^\mtop \fCenter A^\mbot$
\DisplayProof
 &
\AX$A \fCenter A$
\UI$\ast A \fCenter A^\mtop$
\UI$\ast A^\mtop \fCenter A$
\UI$A^\mbot \fCenter \ast \ast A^\mtop$
\UI$\ast A^\mtop \fCenter \ast A^\mbot$
\UI$A^\mbot \fCenter A^\mtop$
\DisplayProof
 \\
\end{tabular}
\end{center}

Augmenting the paired setting with the following rules:
 \begin{center}
\begin{tabular}{rl}
\AX$ \Gamma \fCenter \Delta$
\LeftLabel{\fns pseudo contr$_\oc$}
\UI$\circledast \Delta \fCenter \circledast \Gamma$
\DisplayProof
 &
\AX$ \Pi \fCenter \Sigma$
\RightLabel{\fns pseudo contr$_\wn$}
\UI$\circledast \Sigma \fCenter \circledast \Pi$
\DisplayProof
 \\
 & \\
\AX$\circledast \BCIRCB X \fCenter \Sigma$
\LeftLabel{\fns swap-in / -out}
\doubleLine
\UI$\BCIRCB \ast X \fCenter \Sigma$
\DisplayProof
 &
\AX$\Gamma \fCenter \circledast \BCIRCB X$
\RightLabel{\fns swap-in / -out}
\doubleLine
\UI$\Gamma \fCenter \BCIRCB \ast X$
\DisplayProof
 \\
\end{tabular}
\end{center}
the following sequents become derivable as well.
\begin{center}
\begin{tabular}{cc}
\AX$A \fCenter A$
\UI$\bboxb A \fCenter \BCIRCB A$
\UI$\circledast \BCIRCB A \fCenter \circledast \bboxb A$
\LeftLabel{\fns swap-in}
\UI$\BCIRCB \ast A \fCenter \ast \bboxb A$
\UI$\ast A \fCenter \WCIRCW \ast \bboxb A$
\UI$A^{\mtop} \fCenter \WCIRCW \ast \bboxb A$
\UI$\BCIRCB A^{\mtop} \fCenter \ast \bboxb A$
\UI$\bdiab A^{\mtop} \fCenter \ast \bboxb A$
\UI$\wboxw \bdiab A^{\mtop} \fCenter \WCIRCW \ast \bboxb A$
\LeftLabel{\fns def}
\UI$\wn A^{\mtop} \fCenter \WCIRCW \ast \bboxb A$
\UI$\ast \WCIRCW \ast \bboxb A \fCenter (\wn A^{\mtop})^{\mtop}$
\UI$\ast (\wn A^{\mtop})^{\mtop} \fCenter \WCIRCW \ast \bboxb A$
\RightLabel{\fns swap-out}
\UI$\ast (\wn A^{\mtop})^{\mtop} \fCenter \ast \WCIRCW \bboxb A$
\UI$\WCIRCW \bboxb A \fCenter (\wn A^{\mtop})^{\mtop}$
\UI$\wdiaw \bboxb A \fCenter (\wn A^{\mtop})^{\mtop}$
\RightLabel{\fns def}
\UI$\oc A \fCenter (\wn A^{\mtop})^{\mtop}$
\DisplayProof
 &
\AX$A \fCenter A$
\UI$\BCIRCB A \fCenter \bdiab A$
\UI$\circledast \bdiab A \fCenter \circledast \BCIRCB A$
\RightLabel{\fns swap-in}
\UI$\ast \bdiab A \fCenter \BCIRCB \ast A$
\UI$\WCIRCW \ast \bdiab A \fCenter \ast A$
\UI$\WCIRCW \ast' \bdiab A \fCenter A^{\mbot}$
\UI$\ast \bdiab A \fCenter \BCIRCB A^{\mbot}$
\UI$\ast \bdiab A \fCenter \bboxb A^{\mbot}$
\UI$\WCIRCW \ast \bdiab A \fCenter \wdiaw \bboxb A^{\mbot}$
\RightLabel{\fns def}
\UI$\WCIRCW \ast \bdiab A \fCenter \oc A^{\mbot}$
\UI$(\oc A^{\mbot})^{\mbot} \fCenter \ast \WCIRCW \ast \bdiab A$
\UI$\WCIRCW \ast \bdiab A \fCenter \ast (\oc A^{\mbot})^{\mbot}$
\LeftLabel{\fns swap-out}
\UI$\ast \WCIRCW \bdiab A \fCenter \ast (\oc A^{\mbot})^{\mbot}$
\UI$(\oc A^{\mbot})^{\mbot} \fCenter \WCIRCW \bdiab A$
\UI$(\oc A^{\mbot})^{\mbot} \fCenter \wboxw \bdiab A$
\RightLabel{\fns def}
\UI$(\oc A^{\mbot})^{\mbot} \fCenter \wn A$
\DisplayProof
\end{tabular}
\end{center}

\section{Properties}\label{sec:properties}
\subsection{Soundness}
\label{ssec:soundness}

In the present subsection, we outline the  verification of the soundness of the rules of D.LL w.r.t.~the semantics of {\em perfect} heterogeneous algebras  (cf.~Definition \ref{def:heterogeneous algebras}). The first step consists in interpreting structural symbols as logical symbols according to their (precedent or succedent) position,\footnote{\label{footnote:def precedent succedent pos}For any  sequent $x\vdash y$, we define the signed generation trees $+x$ and $-y$ by labelling the root of the generation tree of $x$ (resp.\ $y$) with the sign $+$ (resp.\ $-$), and then propagating the sign to all nodes according to the polarity of the coordinate of the connective assigned to each node. Positive (resp.\ negative) coordinates propagate the same (resp.\ opposite) sign to the corresponding child node.  Then, a substructure $z$ in $x\vdash y$ is in {\em precedent} (resp.\ {\em succedent}) {\em position} if the sign of its root node as a subtree of $+x$ or $-y$ is  $+$ (resp.\ $-$).}
as indicated in the synoptic tables of Section \ref{ssec:language of DLL}. This makes it possible to interpret sequents as inequalities, and rules as quasi-inequalities. For example, the rules on the left-hand side below are interpreted as the quasi-inequalities on the right-hand side:

\begin{center}
\begin{tabular}{rcl}
\AX$X \fCenter \WCIRCW \Pi \MANDOR \WCIRCW \Sigma$
\UI$X \fCenter \WCIRCW (\Pi \KANDORK \Sigma)$
\DisplayProof
&$\quad\rightsquigarrow\quad$&
$\forall a\forall \xi\forall \chi[a\leq  \wboxw \xi \mor\wboxw \chi \Rightarrow a\leq \wboxw(\xi \kork \chi)]$
\\
&&\\
\AX$\WCIRCW \Pi \MARR \WCIRCW \Gamma \fCenter X$
\UI$\WCIRCW (\Pi \KKBARR \Gamma) \fCenter X$
\DisplayProof
&$\quad\rightsquigarrow\quad$& $\forall \xi\forall \alpha\forall c[\wboxw\xi \mdrarr  \wdiaw\alpha\leq c\Rightarrow  \wdiaw(\xi \KKBrarr\alpha)  \leq c]$
\\
\end{tabular}
\end{center}
The verification of the soundness of the rules of D.LL  then consists in verifying the validity of their corresponding quasi-inequalities in perfect heterogeneous algebras. The verification of the soundness of pure-type rules and of the introduction rules following this procedure is routine, and is omitted.
The validity of the quasi-inequalities corresponding to multi-type structural rules follows straightforwardly from two observations. First, the quasi-inequality corresponding to each rule is obtained by running the algorithm ALBA (cf.~\cite{GMPTZ}) on one of the inequalities in the right column of the statement of Proposition \ref{prop: from non analytic to analytic}. Below we perform an ALBA reduction on  the last inequality in that list:
\begin{center}
\begin{tabular}{c l}
& $\forall a\forall b[\bdiab(a\mdrarr b)\KKBrarr \bboxb b\leq  \bboxb a]$\\
iff & $\forall \xi\forall \alpha\forall c\forall a\forall b[(\bdiab(a\mdrarr b)\leq \xi\ \&\ \alpha\leq \bboxb b\ \&\ a\leq c)\Rightarrow \xi\KKBrarr \alpha\leq  \bboxb c]$\\
iff & $\forall \xi\forall \alpha\forall c\forall b[(\bdiab(c\mdrarr b)\leq \xi\ \&\ \alpha\leq \bboxb b)\Rightarrow \xi\KKBrarr \alpha\leq  \bboxb c]$\\
iff & $\forall \xi\forall \alpha\forall c\forall b[(\bdiab(c\mdrarr b)\leq \xi\ \&\ \wdiaw\alpha\leq  b)\Rightarrow \xi\KKBrarr \alpha\leq  \bboxb c]$\\
iff & $\forall \xi\forall \alpha\forall c[\bdiab(c\mdrarr \wdiaw\alpha)\leq \xi\Rightarrow \xi\KKBrarr \alpha\leq  \bboxb c]$\\
iff & $\forall \xi\forall \alpha\forall c[c\mdrarr \wdiaw\alpha\leq \wboxw\xi\Rightarrow \wdiaw(\xi\KKBrarr \alpha)\leq   c]$\\
iff & $\forall \xi\forall \alpha\forall c[\wdiaw\alpha\leq c\kork \wboxw\xi\Rightarrow \wdiaw(\xi\KKBrarr \alpha)\leq   c]$\\
iff & $\forall \xi\forall \alpha\forall c[\wboxw\xi\mdrarr \wdiaw\alpha\leq c\Rightarrow \wdiaw(\xi\KKBrarr \alpha)\leq   c].$\\

\end{tabular}
\end{center}
It can be readily checked that the ALBA rewriting rules  applied in the computation above (adjunction rules and Ackermann rules) are sound on perfect heterogeneous algebras. As discussed in \cite{GMPTZ}, the soundness of these rewriting rules only depends on the order-theoretic properties of the interpretation of the logical connectives and their adjoints and residuals. The fact that some of these maps are not internal operations but have different domains and codomains does not make any substantial difference.  The second observation is that the axioms in the right column of the statement of Proposition \ref{prop: from non analytic to analytic} are valid by construction on heterogeneous algebras of suitable similarity type (this readily follows from the results in Sections \ref{ssec: linear algebras}, \ref{ssec:from ono axioms to eterogeneous arrows} and \ref{ssec: reverse engineering}), and hence in particular on {\em perfect} heterogeneous algebras. We finish this subsection by showing a one-step run of ALBA on another inequality in the statement of Proposition \ref{prop: from non analytic to analytic}.
\begin{center}
\begin{tabular}{c l}
& $\forall \xi\forall \chi[\wboxw \xi \mor\wboxw \chi \leq \wboxw(\xi \kork \chi)]$\\
iff & $\forall a\forall \xi\forall \chi[a\leq  \wboxw \xi \mor\wboxw \chi \Rightarrow a\leq \wboxw(\xi \kork \chi)].$
\end{tabular}
\end{center}

\subsection{Completeness}
In the present subsection, we show that the translations of the axioms and rules of Girard's calculus for linear logic (cf.\ \cite{Girard1987}) are derivable in D.LL. Since Girard's calculus is complete w.r.t.~the appropriate class of perfect linear algebras, and hence w.r.t~their associated perfect heterogeneous algebras, this is enough to show the completeness of  the version of D.LL corresponding to Girard's calculus.\footnote{In Section \ref{sec:some derivations}, we derive the axioms of the Hilbert-style presentations of intuitionistic linear logic. In a similar fashion, it is possible to transfer the completeness of the Hilbert-style presentation of each variant of linear logic w.r.t.~its associated class of perfect heterogeneous algebras to the associated proper display calculus.}

The derivations of axioms and rules not involving exponentials are standard and we omit them.
In the remainder of the present subsection, we  focus on the rules involving exponentials, namely,

\begin{itemize}
\item left (resp.~right) dereliction and right (resp.~left) promotion rules:
\end{itemize}
\begin{center}
\begin{tabular}{cccc}
\AX$X \MANDOR A \fCenter Y$
\UI$X \MANDOR \oc A \fCenter Y$
\DisplayProof
 &
\AX$X \fCenter A \MANDOR Y$
\UI$X \fCenter \wn A \MANDOR Y$
\DisplayProof
 &
\AX$\oc X \fCenter A \MANDOR \wn Y$
\UI$\oc X \fCenter \oc A \MANDOR \wn Y$
\DisplayProof
 &
\AX$\oc X \MANDOR A \fCenter \wn Y$
\UI$\oc X \MANDOR \wn A \fCenter \wn Y$
\DisplayProof
 \\
\end{tabular}
\end{center}

\begin{itemize}
\item left (resp.~right) weakening and left (resp.~right) contraction rules:
\end{itemize}
\begin{center}
\begin{tabular}{cccc}
\AX$X \fCenter Y$
\UI$X \MANDOR \oc A \fCenter Y$
\DisplayProof
 &
\AX$X \fCenter Y$
\UI$X \fCenter \wn A \MANDOR Y$
\DisplayProof
 &
\AX$X \MANDOR \oc A \MANDOR \oc A \fCenter Y$
\UI$X \MANDOR \oc A \fCenter Y$
\DisplayProof
 &
\AX$X \fCenter \wn A \MANDOR \wn A \MANDOR Y$
\UI$X \fCenter \wn A \MANDOR Y$
\DisplayProof
 \\
\end{tabular}
\end{center}
where, abusing notation, $\oc X$ and $\wn Y$ denote structures which are built from formulas of the form $\oc A$ and $\wn B$ respectively, using only the structural counterpart of $\mand$ and $\mor$.

Translating these rules in the language of D.LL we obtain:

\begin{center}
\begin{tabular}{cccc}
\AX$X \MANDOR A \fCenter Y$
\UI$X \MANDOR \wdiaw \bboxb A \fCenter Y$
\DisplayProof
 &
\AX$X \fCenter A \MANDOR Y$
\UI$X \fCenter \wboxw \bdiab A \MANDOR Y$
\DisplayProof
&
\AX$\wdiaw \bboxb X \fCenter A \MANDOR \wboxw \bdiab Y$
\UI$\wdiaw \bboxb X \fCenter \wdiaw \bboxb A \MANDOR \wboxw \bdiab Y$
\DisplayProof
 &
\AX$\wdiaw \bboxb X \MANDOR A \fCenter \wboxw \bdiab Y$
\UI$\wdiaw \bboxb X \MANDOR \wboxw \bdiab A \fCenter \wboxw \bdiab Y$
\DisplayProof
 \\

 & & & \\

\AX$X \fCenter Y$
\UI$X \MANDOR \wdiaw \bboxb A \fCenter Y$
\DisplayProof
 &
\AX$X \fCenter Y$
\UI$X \fCenter \wboxw \bdiab A \MANDOR Y$
\DisplayProof
 &
\AX$X \MANDOR \wdiaw \bboxb A \MANDOR \wdiaw \bboxb A \fCenter Y$
\UI$X \MANDOR \wdiaw \bboxb A \fCenter Y$
\DisplayProof
 &
\AX$X \fCenter \wboxw \bdiab A \MANDOR \wboxw \bdiab A \MANDOR Y$
\UI$X \fCenter \wboxw \bdiab A \MANDOR Y$
\DisplayProof
 \\
\end{tabular}
\end{center}
where, abusing notation, $\wdiaw \bboxb X$ and $\wboxw \bdiab Y$ denote structures which are built from formulas of the form $\wdiaw \bboxb A^t$ and $\wboxw \bdiab B^t$ respectively, using only the structural counterpart of $\mand$ and $\mor$.

\begin{itemize}
\item Derivations of left- and right-dereliction:
\end{itemize}
\begin{center}
\begin{tabular}{cc}
\AX$X \MANDOR A \fCenter Y$
\UI$A \fCenter X \MARR Y$
\UI$\bboxb A \fCenter \BCIRCB (X \MARR Y)$
\UI$\WCIRCW \bboxb A \fCenter X \MARR Y$
\UI$\wdiaw \bboxb A \fCenter X \MARR Y$
\UI$X \MANDOR \wdiaw \bboxb A \fCenter Y$
\DisplayProof
 &
\AX$X \fCenter A \MANDOR Y$
\UI$X \fCenter Y \MANDOR A$
\UI$Y \MARR X \fCenter A$
\UI$\BCIRCB (Y \MARR X) \fCenter \bdiab A$
\UI$Y \MARR X \fCenter \WCIRCW \bdiab A$
\UI$Y \MARR X \fCenter \wboxw \bdiab A$
\UI$X \fCenter Y \MANDOR \wboxw \bdiab A$
\UI$X \fCenter \wboxw \bdiab A \MANDOR Y$
\DisplayProof
 \\
\end{tabular}
\end{center}

\begin{itemize}
\item Derivations of left- and right-weakening:
\end{itemize}
\begin{center}
\begin{tabular}{cc}
\AX$X \fCenter Y$
\LeftLabel{\fns $W_m$}
\UI$X \MANDOR \WCIRCW \bboxb A \fCenter Y$
\UI$\WCIRCW \bboxb A \fCenter X \MARR Y$
\UI$\wdiaw \bboxb A \fCenter X \MARR Y$
\UI$X \MANDOR \wdiaw \bboxb A \fCenter Y$
\DisplayProof
 &
\AX$X \fCenter Y$
\RightLabel{\fns $W_m$}
\UI$X \fCenter Y \MANDOR \WCIRCW \bdiab A$
\UI$Y \MARR X \fCenter \WCIRCW \bdiab A$
\UI$Y \MARR X \fCenter \wboxw \bdiab A$
\UI$X \fCenter Y \MANDOR \wboxw \bdiab  A$
\UI$X \fCenter \wboxw \bdiab  A \MANDOR Y$
\DisplayProof
\end{tabular}
\end{center}

For the purpose of showing that the promotion and contraction rules are derivable, it is enough to show that the following rules are derivable in D.LL.

\begin{center}
\begin{tabular}{cccc}
\AX$X \MANDOR (\WCIRCW \bboxb A \MANDOR \WCIRCW \bboxb A) \fCenter Y$
\UI$X \MANDOR  \WCIRCW \bboxb A \fCenter Y$
\DisplayProof
 &
\AX$X \fCenter (\WCIRCW \bdiab A \MANDOR \WCIRCW \bdiab A) \MANDOR Y$
\UI$X \fCenter  \WCIRCW \bdiab A \MANDOR Y$
\DisplayProof
 &
\AX$\WCIRCW \Gamma \fCenter A \MANDOR \WCIRCW \Pi$
\UI$\WCIRCW \Gamma \fCenter \wdiaw \bboxb A \MANDOR \WCIRCW \Pi$
\DisplayProof
 &
\AX$\WCIRCW \Gamma \MANDOR A \fCenter \WCIRCW \Pi$
\UI$\WCIRCW \Gamma \MANDOR \wboxw \bdiab A \fCenter \WCIRCW \Pi$
\DisplayProof
 \\
\end{tabular}
\end{center}
Indeed, as discussed in Section \ref{sec: inversion lemmas}, left-introduction rules for $\mathcal{F}$-connectives (such as diamonds) and right-introduction rules for $\mathcal{G}$-connectives (such as boxes) are invertible. Hence, if e.g.~$\wdiaw \bboxb X \fCenter A \MANDOR \wboxw \bdiab Y$ is derivable, using the inversion lemmas, associativity, exchange, regrouping, and display, one can show that $\WCIRCW \Gamma \fCenter A \MANDOR \WCIRCW \Pi$ is derivable  for some $\Gamma$ and $\Pi$. Also, if $\WCIRCW \Gamma \fCenter \wdiaw \bboxb A \MANDOR \WCIRCW \Pi$ is derivable, then using associativity, exchange, coregrouping, display, and box- and diamond-introduction rules, one can show that $\wdiaw \bboxb X \fCenter \wdiaw \bboxb A \MANDOR \wboxw \bdiab Y$ is derivable. In what follows, we show that the rules displayed above are derivable, which completes the proof that the promotion rules are derivable.

\begin{itemize}
\item Derivations of left- and right-promotion
\end{itemize}
\begin{center}
\begin{tabular}{cc}
\AX$\wdiaw \bboxb X \MANDOR A \fCenter \wboxw \bdiab Y$
\dashedLine
\UI$\WCIRCW \Gamma \MANDOR A \fCenter \WCIRCW \Pi$
\UI$A \fCenter \WCIRCW \Gamma \MARR \WCIRCW \Pi$
\RightLabel{\fns FS}
\UI$A \fCenter \WCIRCW (\Gamma \KKRARR \Pi)$
\UI$\BCIRCB A \fCenter \Gamma \KKRARR \Pi$
\UI$\bdiab A \fCenter \Gamma \KKRARR \Pi$
\UI$\wboxw \bdiab A \fCenter \WCIRCW (\Gamma \KKRARR \Pi)$
\RightLabel{\fns co-FS}
\UI$\wboxw \bdiab A \fCenter \WCIRCW \Gamma \MARR \WCIRCW \Pi$
\UI$\WCIRCW \Gamma \MANDOR \wboxw \bdiab A \fCenter \WCIRCW \Pi$
\dashedLine
\UI$\wdiaw \bboxb X \MANDOR \wboxw \bdiab A \fCenter \wboxw \bdiab Y$
\DisplayProof
 &
\AX$\wdiaw \bboxb X \fCenter A \MANDOR \wboxw \bdiab Y$
\dashedLine
\UI$\WCIRCW \Gamma \fCenter A \MANDOR \WCIRCW \Pi$
\UI$\WCIRCW \Gamma \fCenter \WCIRCW \Pi \MANDOR A$
\UI$\WCIRCW \Pi \MARR \WCIRCW \Gamma \fCenter A$
\LeftLabel{\fns FS}
\UI$\WCIRCW (\Pi \KKBARR \Gamma) \fCenter A$
\UI$\Pi \KKBARR \Gamma \fCenter \BCIRCB A$
\UI$\Pi \KKBARR \Gamma \fCenter \bboxb A$
\UI$\WCIRCW (\Pi \KKBARR \Gamma) \fCenter \wdiaw \bboxb A$
\LeftLabel{\fns co-FS}
\UI$\WCIRCW \Pi \MARR \WCIRCW \Gamma \fCenter \wdiaw \bboxb A$
\UI$\WCIRCW \Gamma \fCenter \WCIRCW \Pi \MANDOR \wdiaw \bboxb A$
\UI$\WCIRCW \Gamma \fCenter \wdiaw \bboxb A \MANDOR \WCIRCW \Pi$
\dashedLine
\UI$\wdiaw \bboxb X \fCenter \wdiaw \bboxb A \MANDOR \wboxw \bdiab Y$
\DisplayProof
 \\
\end{tabular}
\end{center}

\begin{itemize}
\item Derivations of left- and right-contraction:
\end{itemize}
\begin{center}
\begin{tabular}{cc}
\AX$X \MANDOR (\wdiaw \bboxb A \MANDOR \wdiaw \bboxb A) \fCenter Y$
\dashedLine
\UI$X \MANDOR (\WCIRCW \bboxb A \MANDOR \WCIRCW \bboxb A) \fCenter Y$
\UI$\WCIRCW \bboxb A \MANDOR \WCIRCW \bboxb A \fCenter X \MARR Y$
\LeftLabel{\fns $C_m$}
\UI$\WCIRCW \bboxb A \fCenter X \MARR Y$
\UI$\wdiaw \bboxb A \fCenter X \MARR Y$
\UI$X \MANDOR \wdiaw \bboxb A \fCenter Y$
\DisplayProof
 &
 \AX$X \fCenter (\wboxw \bdiab A \MANDOR \wboxw \bdiab A) \MANDOR Y$
 \dashedLine
\UI$X \fCenter (\WCIRCW \bdiab A \MANDOR \WCIRCW \bdiab A) \MANDOR Y$
\UI$X \fCenter Y \MANDOR (\WCIRCW \bdiab A \MANDOR \WCIRCW \bdiab A)$
\UI$Y \MARR X \fCenter \WCIRCW \bdiab A \MANDOR \WCIRCW \bdiab A$
\RightLabel{\fns $C_m$}
\UI$Y \MARR X \fCenter \WCIRCW \bdiab A$
\UI$Y \MARR X \fCenter \wboxw \bdiab A$
\UI$X \fCenter Y \MANDOR \wboxw \bdiab A$
\UI$X \fCenter \wboxw \bdiab A \MANDOR Y$
\DisplayProof
 \\
\end{tabular}
\end{center}

\subsection{Conservativity}
\label{ssec:conservativity}
To argue that (each) calculus introduced in Section \ref{sec:ProperDisplayCalculiForLinearLogics} adequately captures its associated linear logic, we follow the standard proof strategy discussed in \cite{GMPTZ,GKPLori}. Let $\mathrm{LL}$ denote a Hilbert-style presentation of (one of the variants of) linear logic  (viz.~those given in \cite{Ono1993,Troelstra1992}); let $\vdash_{\mathrm{LL}}$ denote the syntactic consequence relation arising from $\mathrm{LL}$, and let $\models_{\mathrm{HA}_{\mathrm{LL}}}$ denote the semantic consequence relation arising from (perfect) heterogeneous $\mathrm{LL}$-algebras. We need to show that, for all formulas $A$ and $B$ of the original language of linear logic, if $A^t\vdash B^t$ is a D.LL-derivable sequent, then  $A\vdash_{\mathrm{LL}} B$. This claim can be proved using  the following  facts: (a) the rules of D.LL are sound w.r.t.~perfect heterogeneous $\mathrm{LL}$-algebras  (cf.~Section \ref{ssec:soundness}),  (b) $\mathrm{LL}$ is strongly complete w.r.t.~perfect  $\mathrm{LL}$-algebras, and (c) perfect  $\mathrm{LL}$-algebras are equivalently presented as  perfect heterogeneous $\mathrm{LL}$-algebras (cf.~Section \ref{ssec: reverse engineering}), so that the sematic consequence relations arising from each type of structures preserve and reflect the translation (cf.~Proposition \ref{prop:consequence preserved and reflected}). Then, let $A, B$ be formulas of  the original $\mathrm{LL}$-language. If  $A^t\vdash B^t$ is a D.LL-derivable sequent, then, by (a),  $A^t\models_{\mathrm{HA}_{\mathrm{LL}}} B^t$. By (c), this implies that $A\models_{\mathrm{LL}} B$, where $\models_{\mathrm{LL}}$ denotes the semantic consequence relation arising from (perfect) $\mathrm{LL}$-algebras. By (b), this implies that $A\vdash_{\mathrm{LL}} B$, as required.

\subsection{Cut elimination and subformula property}
\label{ssec:CutEliminationAndSubformulaProperty}

In the present section, we outline the proof of cut elimination and subformula property for the calculi introduced in Section \ref{sec:ProperDisplayCalculiForLinearLogics}. As discussed earlier on, these calculi have been designed so that the cut elimination and subformula property  do not need to be proved via the original argument by Gentzen, but can rather be inferred from a meta-theorem, following the strategy introduced by Belnap for display calculi. The meta-theorem to which we will appeal for the calculi of Section \ref{sec:ProperDisplayCalculiForLinearLogics} was proved in \cite{TrendsXIII}, and  in Section \ref{appendix: cut elim} we report on a restricted version of it (cf.~Theorem \ref{theor:cut elim}) which is the one specifically applying to {\em proper multi-type display calculi} (cf.~Definition \ref{def:proper display calculi}).

By Theorem \ref{theor:cut elim}, it is enough to verify that the calculi of Section \ref{sec:ProperDisplayCalculiForLinearLogics} meet the conditions  listed in Definition \ref{def:proper display calculi}. All conditions except C$_8$ are readily satisfied by inspecting the rules. In what follows we verify C$_8$. This requires to check that reduction steps are available for every application of the cut rule in which both cut-formulas are principal, which either remove the original cut altogether or replace it by one or more cuts on formulas of strictly lower complexity.

\paragraph*{Atomic propositions:}

\begin{center}
\begin{tabular}{ccc}
\bottomAlignProof
\AX$p \fCenter p$
\AX$p \fCenter p$
\BI$p \fCenter p$
\DisplayProof

 & $\rightsquigarrow$ &

\bottomAlignProof
\AX$p \fCenter p$
\DisplayProof
 \\
\end{tabular}
\end{center}

\paragraph*{Constants:}

\begin{center}
\begin{tabular}{ccc}
\bottomAlignProof
\AX$\MTOPBOT \fCenter \mtop$
\AXC{\ \ \ $\vdots$ \raisebox{1mm}{$\pi_1$}}
\noLine
\UI$\MTOPBOT \fCenter X$
\UI$\mtop \fCenter X$
\BI$\MTOPBOT \fCenter X$
\DisplayProof

 & $\rightsquigarrow$ &

\bottomAlignProof
\AXC{\ \ \ $\vdots$ \raisebox{1mm}{$\pi_1$}}
\noLine
\UI$\MTOPBOT \fCenter X$
\DisplayProof
 \\
\end{tabular}
\end{center}

\noindent The cases for $\mbot$, $\aatop$, $\abot$ are standard and similar to the one above.

\paragraph*{Binary connectives monotone in each coordinate:}

\begin{center}
\begin{tabular}{ccc}
\!\!\!\!\!
\bottomAlignProof
\AXC{\ \ \ $\vdots$ \raisebox{1mm}{$\pi_1$}}
\noLine
\UI$X \fCenter A$
\AXC{\ \ \ $\vdots$ \raisebox{1mm}{$\pi_2$}}
\noLine
\UI$Y \fCenter B$
\BI$X \MANDOR Y \fCenter A \mand B$
\AXC{\ \ \ $\vdots$ \raisebox{1mm}{$\pi_3$}}
\noLine
\UI$A \MANDOR B \fCenter Z$
\UI$A \mand B \fCenter Z$
\BI$X \MANDOR Y \fCenter Z$
\DisplayProof

 & $\rightsquigarrow$ &

\!\!\!\!\!\!\!\!\!\!\!\!\!\!\!\!\!\!\!\!
\bottomAlignProof
\AXC{\ \ \ $\vdots$ \raisebox{1mm}{$\pi_1$}}
\noLine
\UI$X \fCenter A$
\AXC{\ \ \ $\vdots$ \raisebox{1mm}{$\pi_2$}}
\noLine
\UI$Y \fCenter B$
\AXC{\ \ \ $\vdots$ \raisebox{1mm}{$\pi_3$}}
\noLine
\UI$A \MANDOR B \fCenter Z$
\UI$B \fCenter A \MARR Z$
\BI$Y \fCenter A \MARR Z$
\UI$A \MANDOR Y \fCenter Z$
\UI$Y \MANDOR A \fCenter Z$
\UI$A \fCenter Y \MARR Z$
\BI$X \fCenter Y \MARR Z$
\UI$Y \MANDOR X \fCenter Z$
\UI$X \MANDOR Y \fCenter Z$
\DisplayProof
 \\
\end{tabular}
\end{center}

\noindent The cases for $A \mand B$, $A \aand B$, $A \mor B$, $A \aor B$ are standard and similar to the one above.

\paragraph*{Binary connectives with some antitone coordinate:}

\begin{center}
\begin{tabular}{ccc}
\!\!\!\!\!
\bottomAlignProof
\AXC{\ \ \ $\vdots$ \raisebox{1mm}{$\pi_1$}}
\noLine
\UI$X \fCenter A \MARR B$
\UI$X \fCenter A \mrarr B$

\AXC{\ \ \ $\vdots$ \raisebox{1mm}{$\pi_2$}}
\noLine
\UI$Y \fCenter A$
\AXC{\ \ \ $\vdots$ \raisebox{1mm}{$\pi_3$}}
\noLine
\UI$B \fCenter Z$
\BI$A \mrarr B \fCenter Y \MARR Z$
\BI$X \fCenter Y \MARR Z$
\DisplayProof

 & $\rightsquigarrow$ &

\!\!\!\!\!\!\!\!\!\!\!\!\!\!\!\!\!\!\!\!
\bottomAlignProof
\AXC{\ \ \ $\vdots$ \raisebox{1mm}{$\pi_2$}}
\noLine
\UI$Y \fCenter A$
\AXC{\ \ \ $\vdots$ \raisebox{1mm}{$\pi_1$}}
\noLine
\UI$X \fCenter A \MARR B$
\UI$A \MANDOR X \fCenter B$
\UI$X \MANDOR A \fCenter B$
\UI$A \fCenter X \MARR B$
\BI$Y \fCenter X \MARR B$
\UI$X \MANDOR Y \fCenter B$
\UI$Y \MANDOR X \fCenter B$

\AXC{\ \ \ $\vdots$ \raisebox{1mm}{$\pi_3$}}
\noLine
\UI$B \fCenter Z$
\BI$Y \MANDOR X \fCenter Z$
\UI$X \fCenter Y \MARR Z$
\DisplayProof
 \\
\end{tabular}
\end{center}

\noindent The case for $A \mdrarr B$ is standard and similar to the one above.

\paragraph*{Unary multi-type connectives:}

\begin{center}
\begin{tabular}{ccc}
\bottomAlignProof
\AXC{\ \ \ $\vdots$ \raisebox{1mm}{$\pi_1$}}
\noLine
\UI$\Gamma \fCenter \alpha$
\UI$\WCIRCW \Gamma \fCenter \wdiaw \alpha$
\AXC{\ \ \ $\vdots$ \raisebox{1mm}{$\pi_2$}}
\noLine
\UI$\WCIRCW \alpha \fCenter X$
\UI$\wdiaw \alpha \fCenter X$
\BI$\WCIRCW \Gamma \fCenter X$
\DisplayProof

 & $\rightsquigarrow$ &

\!\!\!\!\!\!\!
\bottomAlignProof
\AXC{\ \ \ $\vdots$ \raisebox{1mm}{$\pi_1$}}
\noLine
\UI$\Gamma \fCenter \alpha$
\AXC{\ \ \ $\vdots$ \raisebox{1mm}{$\pi_2$}}
\noLine
\UI$\WCIRCW \alpha \fCenter X$
\UI$\alpha \fCenter \BCIRCB X$
\BI$\Gamma \fCenter \BCIRCB X$
\UI$\WCIRCW \Gamma \fCenter X$
\DisplayProof
 \\
\end{tabular}
\end{center}

\begin{center}
\begin{tabular}{ccc}
\bottomAlignProof
\AXC{\ \ \ $\vdots$ \raisebox{1mm}{$\pi_1$}}
\noLine
\UI$\Gamma \fCenter \BCIRCB A$
\UI$\Gamma \fCenter \bboxb A$
\AXC{\ \ \ $\vdots$ \raisebox{1mm}{$\pi_2$}}
\noLine
\UI$A \fCenter X$
\UI$\bboxb A \fCenter \BCIRCB X$
\BI$\Gamma \fCenter \BCIRCB X$
\DisplayProof

 & $\rightsquigarrow$ &

\!\!\!\!\!\!\!
\bottomAlignProof
\AXC{\ \ \ $\vdots$ \raisebox{1mm}{$\pi_1$}}
\noLine
\UI$\Gamma \fCenter \BCIRCB A$
\UI$\WCIRCW \Gamma \fCenter A$
\AXC{\ \ \ $\vdots$ \raisebox{1mm}{$\pi_2$}}
\noLine
\UI$A \fCenter X$
\BI$\WCIRCW \Gamma \fCenter X$
\UI$\Gamma \fCenter \BCIRCB X$
\DisplayProof
 \\
\end{tabular}
\end{center}

\noindent The cases for $\wboxw \xi$ and $\bdiab A$ are standard and similar to the ones above.


\section{Structural control}\label{sec:structural control}
We have argued that, because modularity is in-built in proper display calculi, embedding the proof theory of linear logic in the framework of proper display calculi helps to make the connections between linear logic and other neighbouring logics more systematic. Specifically, by adding the appropriate analytic rules to the version of D.LL corresponding to each linear logic (intuitionistic, bi-intuitionistic and classical) considered in this paper, we can  capture e.g.~the affine and relevant counterparts of each linear logic, while preserving all properties of the basic systems. Moreover, fragments and expansions of linear logic can be captured in the same way, thus creating a framework which accounts for substructural logics. Finally, each of these calculi can be further embedded in richer calculi, for instance to obtain proper display calculi for dynamic epistemic logics on substructural propositional bases.\footnote{In the present paper we have treated the environment of {\em distributive} linear logic with exponentials. However, general linear logic with exponentials can be accounted for by treating the additive connectives as done in \cite{GrecoPalmigianoLatticeLogic}, where a proper display calculus is introduced for the logic of general lattices. Again, we wish to stress that it is the modularity of proper display calculi which makes it possible to obtain straightforwardly proper display calculi for general (i.e.~non-distributive) linear logic by conjoining the treatment of multiplicative connectives and exponentials developed in the present paper to the treatment of additive connectives  developed in \cite{GrecoPalmigianoLatticeLogic}.}

Establishing these connections is very useful for transferring techniques, insights and results from one logical setting to another. In the present section, we give one example of this transfer of techniques from linear logic to categorial grammar in linguistics.

\medskip

In categorial grammar, the proof-theoretic framework of Lambek calculus and some of its extensions are used for generating grammatically well-formed sentences in natural language by means of logical derivations. One crucial problem in this area is accounting for the fact that   grammar rules often admit {\em exceptions}, understood as rules which yield grammatically non well-formed constructions if applied unrestrictedly, but grammatically well-formed sentences if applied in a  controlled way. In \cite{KurtoninaMoortgat1997}, Kurtonina and Moortgat propose a proof-theoretic framework which accounts for the controlled  application of  associativity, commutativity and their combination (among others). Their proposal is conceptually akin to the exponentials in linear logic. Indeed, the basic language of their proposal is an expansion of the basic Lambek calculus with two modal operators adjoint to one another,  inspired to the modal operators into which $\oc$ decomposes. 
In fact, the requirements on the modal operators of \cite{KurtoninaMoortgat1997} would perfectly match the multi-type modal operators introduced in the present paper, were it not for the fact that they are captured algebraically  as operations {\em internal} to an FL-algebra, rather than having different algebras as domain and codomain. That is, Kurtonina and Moortgat adopt a single-type environment. In what follows, we recast (a fragment of) Kurtonina and Moortgat's framework for structural control in a multi-type setting.

\medskip
 We consider three different multi-type environments, each of which includes two types: a $\mathsf{General}$ type (corresponding to Lambek calculus), and a $\mathsf{Special}$ type, corresponding to associative, commutative, and associative+commutative Lambek calculus, respectively. The three environments have the same language, specified as follows:
 \begin{align*}
 \mathsf{General}\ni  A ::= & \,p \mid \, \wdiaw\alpha   \mid A \mand A \mid A \mapsto A \mid A {\,\ensuremath{\rotatebox[origin=c]{180}{$\mapsto$}}\,} A\\
\mathsf{Special}\ni  \alpha ::= &\, \bboxb A \mid \alpha \odot \alpha \mid  \alpha  \rightarrowtail \alpha \mid \alpha \leftarrowtail \alpha
\end{align*}

The language above is interpreted into algebraic structures $(\mathbb{L}, \mathbb{A}, \bboxb, \wdiaw)$  such that:
\begin{enumerate}
\item[FL1.] $\mathbb{L} = (L, \leq, \mand, \mapsto, {\,\ensuremath{\rotatebox[origin=c]{180}{$\mapsto$}}\,})$ is a partially ordered algebra;
\item[FL2.] $a\mand b\leq c$ iff $a\leq c{\,\ensuremath{\rotatebox[origin=c]{180}{$\mapsto$}}\,} b$ iff $b\leq a\mapsto c$ for all $a, b, c\in L$;
\item[FL3.] $\mathbb{A} = (A, \leq, \odot, \rightarrowtail, \leftarrowtail)$ is a partially ordered algebra;
\item[FL4.] $\alpha\odot \beta\leq \gamma$ iff $\alpha\leq \gamma\rightarrowtail \beta$ iff $\beta\leq \alpha \leftarrowtail \gamma$ for all $\alpha, \beta, \gamma\in A$;
\item[FL5.] $\bboxb: \mathbb{L}\to \mathbb{A}$ and $\wdiaw: \mathbb{A}\to\mathbb{L}$ are such that $\wdiaw\dashv \bboxb$ and $\bboxb\wdiaw = Id_{\mathbb{A}}$;
\item[FL6.] $\wdiaw \alpha\mand \wdiaw \beta = \wdiaw (\alpha\odot\beta)$ for for all $\alpha, \beta \in A$.
\end{enumerate}
Structures $(\mathbb{L}, \mathbb{A}, \bboxb, \wdiaw)$ satisfying FL1-FL6 will be referred to as {\em heterogeneous FL-algebras}. Any such  structure is {\em associative} if in addition
\begin{enumerate}
\item[FL7.] $\alpha\odot (\beta\odot \gamma) = (\alpha\odot\beta)\odot \gamma$ for all $\alpha, \beta, \gamma\in A$,
\end{enumerate}
and is {\em commutative} if
\begin{enumerate}
\item[FL8.] $\alpha\odot \beta = \beta\odot \alpha$ for all $\alpha, \beta\in A$.
\end{enumerate}

With an argument similar to the one given in Proposition \ref{prop:reverse engineering}, one shows that any heterogeneous FL-algebra gives rise to an algebra $\mathbb{L} = (L, \leq, \mand, \mapsto, {\,\ensuremath{\rotatebox[origin=c]{180}{$\mapsto$}}\,}, \oc)$ such that $\mathbb{L} = (L, \leq, \mand, \mapsto, {\,\ensuremath{\rotatebox[origin=c]{180}{$\mapsto$}}\,})$ satisfies FL1 and FL2, and $\oc: L\to L$ defined as $\oc a: = \wdiaw\bboxb a$ satisfies S2 and S3. With the help of $\oc$ (or, equivalently, of $\wdiaw\bboxb$), the controlled commutativity and associativity in $\mathbb{L}$ can be expressed as follows:
\[\oc A\mand(\oc B\mand \oc C) = (\oc A\mand\oc B)\mand \oc C \quad \mbox{ and }\quad \oc A\mand\oc B = \oc B \mand \oc A,\]
which corresponds to the full internal labelling discussed in \cite[Section 3.2]{KurtoninaMoortgat1997}.
A basic multi-type display calculus D.FL can be straightforwardly introduced along the lines of D.LL in the following language:

\begin{center}
\begin{tabular}{|c|c|c|c|c|c|c|c|c|c|c|c|c|c|c|c|}
\hline
\mc{6}{|c|}{\fns Pure General-type} & \mc{6}{|c|}{\fns Pure Special-type} & \mc{4}{|c|}{\fns Multi-type} \\
\hline
\mc{2}{|c|}{$,$} & \mc{2}{c|}{$\gg$} & \mc{2}{c|}{$\ll$} & \mc{2}{|c|}{$;$} & \mc{2}{c|}{$>$} & \mc{2}{c|}{$<$} & \mc{2}{|c|}{$\BCIRCB$} & \mc{2}{c|}{$\WCIRCW$} \\
\hline
$\mand$ & $\phantom{\times}$ & $\phantom{\mapsto}$ & $\mapsto$ & $\phantom{\ensuremath{\rotatebox[origin=c]{180}{$\mapsto$}}}$ & \mc{1}{c|}{\ensuremath{\rotatebox[origin=c]{180}{$\mapsto$}}} & \mc{1}{|c|}{$\odot$} & $\phantom{\odot}$ & $\phantom{\rightarrowtail}$ & $\rightarrowtail$ & $\phantom{\leftarrowtail}$ & \mc{1}{c|}{$\leftarrowtail$} & \mc{1}{|c|}{$\phantom{\bboxb}$} & $\bboxb$ & $\wdiaw$ & $\phantom{\wdiaw}$ \\
\hline
\end{tabular}
\end{center}
 The rules of D.FL include  identity axioms for atomic formulas (of $\mathsf{General}$ type), cut rules for both types, display postulates for the pure-type connectives modelled on conditions FL2 and FL4, display postulates for the multi-type connectives modelled on FL5, standard introduction rules for all connectives,\footnote{I.e.,  the left-introduction rules for $\mand, \odot$ and $\wdiaw$ are invertible and right-introduction rules of the remaining connectives are invertible.} and the following {\em regrouping/co-regrouping} rule, which captures FL6:
\[\AX$\WCIRCW\Gamma\MANDOR \WCIRCW\Delta\fCenter X$
\LeftLabel{\fns reg / coreg}
\doubleLine
\UI$\WCIRCW(\Gamma\KANDORK \Delta)\fCenter X$
\DisplayProof\]

The associative,  commutative, and associative + commutative extensions of the basic calculus D.FL are respectively defined by adding one, the other or both of the following rules, which hold unrestricted in the appropriate $\mathsf{Special}$ type:
\begin{center}
\begin{tabular}{c c c}
\AX$\Gamma \KANDORK(\Delta\KANDORK \Theta)\fCenter \Lambda$
\LeftLabel{\fns $A_s$}
\doubleLine
\UI$(\Gamma\KANDORK \Delta)\KANDORK \Theta\fCenter \Lambda$
\DisplayProof
&$\quad$&
\AX$\Gamma \KANDORK\Delta\fCenter X$
\LeftLabel{\fns $E_s$}
\doubleLine
\UI$\Delta\KANDORK \Gamma\fCenter X$
\DisplayProof
\end{tabular}
\end{center}
Then,  the appropriate extension of D.FL derives the restricted associativity and commutativity holding in the $\mathsf{General}$ type using the interaction between regrouping and co-regrouping, display rules, and the {\em unrestricted}  associativity and commutativity holding in the $\mathsf{Special}$ type:
\begin{center}
\begin{tabular}{cc}
Exchange & Associativity \\
 & \\
\!\!\!\!\!\!\!\!\!\!\!\!\!\!\!\!\!\!\!\!\!\!
\AX$A \fCenter A$
\UI$\bboxb A \fCenter \BCIRCB A$
\UI$\bboxb A \fCenter \bboxb A$
\UI$\WCIRCW \bboxb A \fCenter \wdiaw \bboxb A$
\AX$B \fCenter B$
\UI$\bboxb B \fCenter \BCIRCB B$
\UI$\bboxb B \fCenter \bboxb B$
\UI$\WCIRCW \bboxb B \fCenter \wdiaw \bboxb B$
\BI$\WCIRCW \bboxb A \MANDOR \WCIRCW \bboxb B \fCenter \wdiaw \bboxb A \mand \wdiaw \bboxb B$
\LeftLabel{\fns reg}
\UI$\WCIRCW (\bboxb A \KANDORK \bboxb B) \fCenter \wdiaw \bboxb A \mand \wdiaw \bboxb B$
\UI$\bboxb A \KANDORK \bboxb B \fCenter \BCIRCB \wdiaw \bboxb A \mand \wdiaw \bboxb B$
\LeftLabel{\fns $E_s$}
\UI$\bboxb B \KANDORK \bboxb A \fCenter \BCIRCB \wdiaw \bboxb A \mand \wdiaw \bboxb B$
\UI$\WCIRCW (\bboxb B \KANDORK \bboxb A) \fCenter \wdiaw \bboxb A \mand \wdiaw \bboxb B$
\LeftLabel{\fns coreg}
\UI$\WCIRCW \bboxb B \MANDOR \WCIRCW \bboxb A \fCenter \wdiaw \bboxb A \mand \wdiaw \bboxb B$
\dashedLine
\UI$\wdiaw \bboxb B \mand \wdiaw \bboxb A \fCenter \wdiaw \bboxb A \mand \wdiaw \bboxb B$
\DisplayProof

 &
\!\!\!\!\!\!\!\!\!\!\!\!\!\!\!\!\!\!\!\!\!\!\!\!\!\!\!\!\!\!\!\!\!\!\!\!
\AX$A \fCenter A$
\UI$\bboxb A \fCenter \BCIRCB A$
\UI$\bboxb A \fCenter \bboxb A$
\UI$\WCIRCW \bboxb A \fCenter \wdiaw \bboxb A$
\AX$B \fCenter B$
\UI$\bboxb B \fCenter \BCIRCB B$
\UI$\bboxb B \fCenter \bboxb B$
\UI$\WCIRCW \bboxb B \fCenter \wdiaw \bboxb B$
\BI$\WCIRCW \bboxb A \MANDOR \WCIRCW \bboxb B \fCenter \wdiaw \bboxb A \mand \wdiaw \bboxb B$
\LeftLabel{\fns reg}
\UI$\WCIRCW (\bboxb A \KANDORK \bboxb B) \fCenter \wdiaw \bboxb A \mand \wdiaw \bboxb B$

\AX$C \fCenter C$
\UI$\bboxb C \fCenter \BCIRCB C$
\UI$\bboxb C \fCenter \bboxb C$
\UI$\WCIRCW \bboxb C \fCenter \wdiaw \bboxb C$
\BI$\WCIRCW (\bboxb A \KANDORK \bboxb B) \MANDOR \WCIRCW \bboxb C \fCenter (\wdiaw \bboxb A \mand \wdiaw \bboxb B) \mand \wdiaw \bboxb C$
\LeftLabel{\fns reg}
\UI$\WCIRCW ((\bboxb A \KANDORK \bboxb B) \KANDORK \bboxb C) \fCenter (\wdiaw \bboxb A \mand \wdiaw \bboxb B) \mand \wdiaw \bboxb C$
\UI$(\bboxb A \KANDORK \bboxb B) \KANDORK \bboxb C \fCenter \BCIRCB (\wdiaw \bboxb A \mand \wdiaw \bboxb B) \mand \wdiaw \bboxb C$
\LeftLabel{\fns $A_s$}
\UI$\bboxb A \KANDORK (\bboxb B \KANDORK \bboxb C) \fCenter \BCIRCB (\wdiaw \bboxb A \mand \wdiaw \bboxb B) \mand \wdiaw \bboxb C$

\UI$\WCIRCW (\bboxb A \KANDORK (\bboxb B \KANDORK \bboxb C)) \fCenter  (\wdiaw \bboxb A \mand \wdiaw \bboxb B) \mand \wdiaw \bboxb C$
\LeftLabel{\fns coreg}
\UI$\WCIRCW \bboxb A \MANDOR \WCIRCW (\bboxb B \KANDORK \bboxb C) \fCenter  (\wdiaw \bboxb A \mand \wdiaw \bboxb B) \mand \wdiaw \bboxb C$
\UI$\WCIRCW (\bboxb B \KANDORK \bboxb C) \fCenter \WCIRCW \bboxb A \MARR (\wdiaw \bboxb A \mand \wdiaw \bboxb B) \mand \wdiaw \bboxb C$
\LeftLabel{\fns coreg}
\UI$\WCIRCW \bboxb B \MANDOR \WCIRCW \bboxb C \fCenter \WCIRCW \bboxb A \MARR (\wdiaw \bboxb A \mand \wdiaw \bboxb B) \mand \wdiaw \bboxb C$

\UI$\WCIRCW \bboxb A \MANDOR (\WCIRCW \bboxb B \MANDOR \WCIRCW \bboxb C) \fCenter (\wdiaw \bboxb A \mand \wdiaw \bboxb B) \mand \wdiaw \bboxb C$
\dashedLine
\UI$\wdiaw \bboxb A \mand (\wdiaw \bboxb B \mand \wdiaw \bboxb C) \fCenter (\wdiaw \bboxb A \mand \wdiaw \bboxb B) \mand \wdiaw \bboxb C$
\DisplayProof
 \\
\end{tabular}
\end{center}

\section{Conclusions}\label{sec:conclusions}
\paragraph{Results.} In the present paper, we have introduced  proper display calculi for several variants of classical, intuitionistic and bi-intuitionistic linear logic, and proved soundness, syntactic completeness, conservativity, cut elimination and subformula property for each. These results are key instances of results in the wider research program of multi-type algebraic proof theory \cite{GrecoPalmigianoMultiTypeAlgebraicProofTheory}, which, generalizing \cite{GMPTZ,MZ16}, integrates algebraic canonicity and correspondence techniques \cite{CoPa12,CFPS15,PaSoZh15,PaSoZh16,FrPaSa16,CoPa:non-dist,CP:constructive,CoCr14,CCPZ,CFPPTW,ConRob,LeRoux:MThesis:2016} into structural proof theory, and is aimed at defining calculi with the same excellent properties of those introduced in the present paper for each member of a  class of logics encompassing both dynamic logics (such as PAL \cite{Plaza}, DEL \cite{BMS,Multitype}, PDL \cite{Kozen,PDL}, game logic \cite{pauly2003game}, coalition logic \cite{pauly2002coalition}) and (non-distributive) substructural logics \cite{galatos2007residuated}. This theory guarantees, among other things, that each of the aforementioned properties transfers to calculi corresponding to {\em fragments} of the linear languages considered in the present paper, and to their analytic {\em axiomatic extensions} and {\em expansions}. As an application of the results and insights developed in the present paper, we have given a multi-type reformulation of the mathematically akin but independently motivated formal framework of  \cite{KurtoninaMoortgat1997},  aimed at extending the use of exponential-type connectives, decomposed into pairs of adjoint modal operators, for the controlled application of structural rules. In particular, we have outlined a multi-type framework in which the applications of commutativity and associativity are controlled in the same way in which applications of weakening and contraction are controlled in linear logic.

\paragraph{From distributive to general linear logic. } In the various  linear logics treated in the present paper,  additive connectives  are {\em distributive}. However, general linear logics  can be also accounted for in the framework of proper multi-type display calculi by treating the additive connectives as done in \cite{GrecoPalmigianoLatticeLogic}, where a proper multi-type display calculus is introduced for the logic of general lattices. Again, this can be achieved straightforwardly thanks to the modularity in-built in the design of proper multi-type display calculi.

\paragraph{Alternative symmetrization of intuitionistic linear logic. } Closely connected to the modularity of (proper) display calculi is their {\em symmetry}. Indeed, while, in the original Gentzen calculi, the difference between e.g.~the classical and the intuitionistic behaviour is captured by a restriction on the shape of sequents which entails that succedent parts are managed differently from precedent ones,   display calculi  sequents have the same unrestricted shape in every setting with no difference in the management of  precedent and succedent parts.  This is why, in the environment introduced in the present paper,  it has been  natural to consider {\em linear subtraction} $\mdrarr$ along with linear implication. This connective makes it possible to realize a symmetrization of intuitionistic linear logic  alternative to the one realized by switching to classical linear logic, and  rather analogous to the one effected by switching from intuitionistic to bi-intuitionistic logic. As discussed in Section \ref{sec: background can ext},  linear subtraction can always be interpreted in {\em perfect} intuitionistic linear algebras as the left residual of $\mor$, and moreover, intuitionistic linear logic is complete w.r.t.~perfect intuitionistic linear algebras. These facts imply that bi-intuitionistic linear logic, defined as the logic of bi-intuitionistic linear algebras (cf.~Definition \ref{def:linear algebras}), conservatively extends intuitionistic linear logic. When it comes to the treatment of exponentials in the bi-intuitionistic setting, we have considered both Ono's interaction axiom  (cf.~P1 in Definition \ref{def:linear algebras}) and its symmetric version expressed in terms of $\mdrarr$ (cf.~BLP2 in Definition \ref{def:linear algebras}). In Section \ref{ssec:from ono axioms to eterogeneous arrows}, we have showed that, while Ono's interaction axiom corresponds to the left-promotion rule (of $\wn$), its symmetric version corresponds to the right-promotion rule (of $\oc$). Bi-intuitionistic linear logic provides an environment in which all the original rules involving exponentials are derivable (i.e.~restricted weakening and contraction, promotion and dereliction rules). We conjecture   that $\oc$ and $\wn$ are not necessarily interdefinable in the bi-intuitionistic linear setting as in classical linear logic. These features are interesting and deserving further investigation. In particular, the residuation between ${\mdrarr}\dashv{\mor}$ can perhaps provide a handle towards an improvement in the understanding of the computational meaning of both connectives.

\paragraph{Extending the boundaries of properness. } Multi-type calculi are a natural generalization of the framework of display calculi, which was introduced by Belnap \cite{Belnap1982} and refined by Wansing \cite{Wansing1998} with the aim of providing an environment in which  ``an indefinite number of logics all mixed together'' can coexist. The main technical motivation for the introduction of proper multi-type display calculi is that they can provide a way for circumventing the characterization theorems of \cite{GMPTZ,CiabattoniRamanayake2016}, which set hard boundaries to the scope of Belnap and Wansing's proper display calculi. The results of the present paper are a case in point: indeed,  linear logic with exponentials cannot be captured by a proper {\em single type} display calculus (cf.~Section \ref{sec: multi-type language}), but is properly displayable in a suitable multi-type setting. Conceptually, multi-type calculi realize a natural prosecution of Belnap's program, and one which is particularly suited to account both for frameworks (such as those of dynamic logics)  characterized by the coexistence of formulas and other heterogeneous elements (such as agents, actions, strategies, coalitions, resources), and for frameworks (such as those of linear and substructural logics) characterized by the coexistence of different ``structural regimes''. Interestingly, both  dynamic logics and  substructural logics have proven difficult to treat with standard proof-theoretic techniques, and  the hurdles in each of these settings can often be understood in terms of the interaction between the different components  of a given logical framework (such as heterogeneous elements in various dynamic logics, and different structural regimes in various substructural logics). The multi-type methodology provides a uniform strategy to address these hurdles (and has been successful in the case of various logics) by recognizing each component explicitly as a {\em type}, introducing enough structural machinery so as to cater for each type on a par with any other type, and then capturing the interaction between different types purely at the structural level, by means of analytic rules involving multi-type structural connectives.

\paragraph{Further directions. } Multi-type calculi form an environment in which it has been possible to settle the question concerning the analitycity of linear logic. However, and perhaps even more interestingly, this  environment also helps to clearly formulate a broad range of questions at various levels of generality, spanning from the one concerning the alternative symmetrization of linear logic discussed above, to the concrete implementation of Girard's research program on `the unity of logic' \cite{girard:unity}. Another such question concerns the systematic exploration of the different versions of the analytic rules which encode the pairing axioms P1 and BLP2 (cf.~Definition \ref{def:linear algebras}) and make it possible to derive the non-analytic promotion/demotion rules. Indeed, these different versions are not equivalent in every setting, which opens up the possibility of making finer-grained distinctions. 
We conjecture that these relations can be expressed 
also in fragments of the languages considered in the present paper, such as the purely positive setting, along the lines of Dunn's positive modal logic \cite{Dunn95}. 


\bibliography{BIB}
\bibliographystyle{plain}

\appendix

\section{Proper multi-type display calculi and their metatheorem}
\label{appendix: cut elim}
For the sake of self-containedness, in the present section we report on (an adaptation of)  the definitions and results of \cite{TrendsXIII}, from which the cut-elimination and subformula property can be straightforwardly inferred for the calculi defined in Section \ref{sec:ProperDisplayCalculiForLinearLogics}.


The calculi defined in Section \ref{sec:ProperDisplayCalculiForLinearLogics} satisfy stronger requirements than those for which the cut elimination meta-theorem \cite[Theorem 4.1]{TrendsXIII} holds. Hence,  below we provide the corresponding restriction of the definition of
quasi-proper multi-type calculus given in \cite{TrendsXIII}, which applies specifically to the calculi of Section \ref{sec:ProperDisplayCalculiForLinearLogics}. 
The resulting definition, given below, is the exact counterpart in the multi-type setting of the definition of proper display calculi introduced in \cite{Wansing1998} and generalized in \cite{GMPTZ}.

A sequent $x\vdash y$ is {\em type-uniform} if $x$ and $y$ are of the same type.
 \begin{definition}
 \label{def:proper display calculi}
 {\em Proper multi-type display calculi} are  those satisfying the following list of conditions:

\paragraph*{C$_1$: Preservation of operational terms.\;} Each operational term occurring in a premise of an inference rule {\em inf} is a subterm of some operational term in the conclusion of {\em inf}.

\paragraph*{C$_2$: Shape-alikeness and type-alikeness of parameters.\;} Congruent parameters\footnote{The congruence relation between non active-parts in rule-applications is understood as derived from the specification of each rule; that is, we assume that each schematic rule of the system comes with an explicit specification of which elements are congruent to which (and then the congruence relation is defined as the reflexive and transitive closure of the resulting relation). Our convention throughout the paper is that congruent parameters are denoted by the same structural variables. For instance, in the rule $$\frac{X;Y\vdash Z}{Y;X\vdash Z}$$ the structures $X,Y$ and $Z$ are parametric and the occurrences of $X$ (resp.\ $Y$, $Z$) in the premise and the conclusion are congruent.} are occurrences of the same structure, and are of the same type.


\paragraph*{C$_3$: Non-proliferation of parameters.\;} Each parameter in an inference rule {\em inf} is congruent to at most one constituent in the conclusion of {\em inf}.

\paragraph*{C$_4$: Position-alikeness of parameters.\;} Congruent parameters are either all in  precedent position or all in succedent position (cf.~Footnote \ref{footnote:def precedent succedent pos}).

\paragraph*{C$_5$: Display of principal constituents.\;} If an operational term $a$ is principal in the conclusion sequent $s$ of a derivation $\pi$, then $a$ is in display. 


\paragraph*{C$_6$: Closure under substitution for succedent parts within each type.\;} Each rule is closed under simultaneous substitution of arbitrary structures for congruent operational terms occurring in succedent position, {\em within each type}.

\paragraph*{C$_7$: Closure under substitution for precedent parts within each type.\;} Each rule is closed under simultaneous substitution of arbitrary structures for congruent operational terms occurring in precedent position, {\em within each type}.

\paragraph*{C$_8$: Eliminability of matching principal constituents.\;}
This condition requests a standard Gentzen-style checking, which is now limited to the case in which  both cut formulas are {\em principal}, i.e.~each of them has been introduced with the last rule application of each corresponding subdeduction. In this case, analogously to the proof Gentzen-style, condition C$_8$ requires being able to transform the given deduction into a deduction with the same conclusion in which either the cut is eliminated altogether, or is transformed in one or more applications of the cut rule, involving proper subterms of the original operational cut-term. 


\paragraph*{C$_9$: Type-uniformity of derivable sequents.} Each derivable sequent is type-uniform.

\paragraph*{C$_{10}$: Preservation of type-uniformity of cut rules.} All cut rules preserve type-uniformity.
\end{definition}

Since proper multi-type display calculi are quasi-proper, the following theorem is an immediate consequence of \cite[Theorem 4.1]{TrendsXIII}:
\begin{theorem}
\label{theor:cut elim} Every proper multi-type display calculus enjoys cut elimination and subformula property.
\end{theorem}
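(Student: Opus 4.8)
The statement to be proved is Theorem~\ref{theor:cut elim}: every proper multi-type display calculus enjoys cut elimination and the subformula property. As explicitly noted in the excerpt, proper multi-type display calculi are in particular \emph{quasi-proper} multi-type calculi in the sense of \cite{TrendsXIII}, since conditions C$_1$--C$_{10}$ of Definition~\ref{def:proper display calculi} strengthen (and in fact include) the conditions required there. Hence the natural --- and intended --- route is simply to invoke \cite[Theorem~4.1]{TrendsXIII}: the hard combinatorial work (the global induction on cut-height together with the principal/parametric case analysis) has already been carried out in that reference, and the present theorem is an immediate corollary once one observes that the hypothesis of that meta-theorem is met.

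Concretely, the plan is as follows. First I would recall, from \cite{TrendsXIII}, the exact statement of the quasi-properness conditions and of the cut-elimination meta-theorem, and then verify clause by clause that any calculus satisfying C$_1$--C$_{10}$ above is quasi-proper. Most clauses (C$_1$--C$_7$, C$_9$, C$_{10}$) translate verbatim or essentially verbatim; the only clause requiring a remark is C$_8$: our formulation restricts the Gentzen-style checking to the case in which \emph{both} cut-formulas are principal, which is precisely the strengthening that distinguishes proper from quasi-proper calculi (in the quasi-proper setting one must also handle the mixed case). Since a stronger hypothesis still entails the weaker one, this restriction only makes the meta-theorem easier to apply, not harder. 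Having established that proper $\Rightarrow$ quasi-proper, the conclusion --- cut elimination plus subformula property --- follows directly.

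If one instead wanted a self-contained argument, the plan would be to reproduce the Belnap-style proof: define the cut-rank and cut-height of a derivation; argue by a double induction (outer on the complexity of the cut-formula, inner on the sum of the heights of the two subderivations above the cut) that any topmost application of Cut can be eliminated or replaced by applications on structurally simpler operational terms. The case split is: (i) one of the cut-formulas is parametric in the last rule of its subderivation --- then push the cut upward using C$_2$--C$_4$, C$_6$, C$_7$ to permute it past that rule (the position-alikeness and shape-alikeness conditions guarantee the substitution is well-typed and legal, and C$_9$/C$_{10}$ keep everything type-uniform); (ii) both cut-formulas are principal --- then apply C$_8$ to rewrite the derivation so that the cut disappears or is replaced by cuts on proper subterms, which the outer induction handles; the display property (built into C$_5$ together with the display postulates of the calculus) is what allows one to bring the relevant constituent into display before applying C$_8$. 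The subformula property then falls out of C$_1$ together with cut-freeness.

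\textbf{Main obstacle.} Given the architecture of the paper, there is no genuine mathematical obstacle in the proof of the theorem \emph{as stated}: it is a citation plus a routine bookkeeping check that the conditions of Definition~\ref{def:proper display calculi} imply quasi-properness. The real content --- and the only place where something could go wrong --- lies not here but in Section~\ref{ssec:CutEliminationAndSubformulaProperty}, where one must verify that the specific calculi D.LL of Section~\ref{sec:ProperDisplayCalculiForLinearLogics} actually satisfy C$_1$--C$_{10}$, and in particular that the C$_8$ reductions go through for every matching pair of principal cut-formulas, including the multi-type unary connectives $\bbox,\diab,\wdia,\boxw$ and the binary heterogeneous connectives. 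That verification is where the multi-type design (all connectives normal, all display postulates sound on perfect heterogeneous algebras) pays off, but it is logically separate from the meta-theorem itself. So for the present statement the expected difficulty is essentially nil; the one point deserving care is making the proper-$\Rightarrow$-quasi-proper implication precise for condition C$_8$, i.e.\ checking that restricting the Gentzen-style analysis to the both-principal case is a legitimate strengthening of the quasi-proper requirement rather than an orthogonal one.
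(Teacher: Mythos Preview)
Your proposal is correct and matches the paper's approach exactly: the paper's entire proof is the single sentence ``Since proper multi-type display calculi are quasi-proper, the following theorem is an immediate consequence of \cite[Theorem~4.1]{TrendsXIII},'' and you have reproduced this with additional (helpful) elaboration. Your flagged concern about C$_8$ is the right place to be careful, though note the implication proper $\Rightarrow$ quasi-proper goes through because the \emph{other} proper conditions (notably C$_5$, display of principal constituents) are strict strengthenings that render the extra quasi-proper C$_8$ cases vacuous, not because proper's C$_8$ is itself stronger.
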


\section{Analytic inductive inequalities}
\label{sec:analytic inductive ineq}
In the present section, we  specialize the definition of {\em analytic inductive inequalities} (cf.\ \cite{GMPTZ,GrecoPalmigianoMultiTypeAlgebraicProofTheory}) to the  multi-type language $\mathcal{L}_\mathrm{MT}$, in the types $\mathsf{Linear}$, $\oc\text{-}\mathsf{Kernel}$ and $\wn\text{-}\mathsf{Kernel}$ (respectively abbreviated as $\mathsf{L}$, $\mathsf{K}_{\oc}$ and $\mathsf{K}_{\wn}$),  defined in Section \ref{sec: multi-type language}  and reported below for the reader's convenience:\footnote{The definition given in the present appendix is applicable to the setting of distributive linear logic only. For the definition in the general setting, the reader is referred to \cite{GrecoPalmigianoMultiTypeAlgebraicProofTheory}.}
\begin{align*}
\mathsf{K}_{\oc}\ni \alpha ::=&\, \iota(A) \mid \ktop \mid \kbot \mid \alpha \kor \alpha \mid  \alpha \kand \alpha \mid \alpha \krarr \alpha \mid \xi\KKBrarr \alpha \\
\mathsf{K}_{\wn}\ni \xi ::=&\, \gamma(A) \mid \topk \mid \botk \mid \xi \andk \xi \mid \xi \ork \xi \mid \xi \drarrk \xi \mid \alpha\KKrarr \xi \\
\mathsf{L}\ni  A ::= & \,p \mid \, e_{\oc}(\alpha) \mid e_{\wn}(\xi) \mid \mtop \mid \mbot \mid A\mneg \mid A \mand A \mid A \mor A \mid A \mrarr A \mid A \mdrarr A\mid \aatop \mid \abot \mid A \aand A \mid A \aor A
\end{align*}
We will make use of the following auxiliary definition: an {\em order-type} over $n\in \mathbb{N}$  is an $n$-tuple $\epsilon\in \{1, \partial\}^n$. For every order type $\epsilon$, we denote its {\em opposite} order type by $\epsilon^\partial$, that is, $\epsilon^\partial(i) = 1$ iff $\epsilon(i)=\partial$ for every $1 \leq i \leq n$.
The connectives of the language above are grouped together  into the  families $\mathcal{F}: = \mathcal{F}_{\mathsf{K}_{\oc}}\cup \mathcal{F}_{\mathsf{K}_{\wn}}\cup \mathcal{F}_{\mathsf{L}}\cup \mathcal{F}_{\mathrm{MT}}$ and $\mathcal{G}: = \mathcal{G}_{\mathsf{K}_{\oc}}\cup \mathcal{G}_{\mathsf{K}_{\wn}}\cup \mathcal{G}_{\mathsf{L}}\cup \mathcal{G}_{\mathrm{MT}}$ defined as follows:
\begin{center}
\begin{tabular}{lcl}
$\mathcal{F}_{\mathsf{K}_{\oc}}: = \{\ktop, \kand\}$ &$\quad$& $\mathcal{G}_{\mathsf{K}_{\oc}}: = \{\kbot, \kor, \krarr\}$\\
$\mathcal{F}_{\mathsf{K}_{\wn}}: = \{\topk, \andk, \drarrk\}$ && $\mathcal{G}_{\mathsf{K}_{\wn}}: = \{\botk, \ork\}$\\
$\mathcal{F}_{\mathsf{L}}: = \{\mtop, \mand, \mdrarr, \aatop, \aand\}$ && $\mathcal{G}_{\mathsf{L}}: = \{\mbot, \mor, \mrarr, \abot, \aor\}$\\
$\mathcal{F}_{\mathrm{MT}}: = \{e_{\oc}, \gamma, \KKBrarr\}$ && $\mathcal{G}_{\mathrm{MT}}: = \{e_{\wn},\iota, \KKrarr \}$.\\
\end{tabular}
\end{center}
For any $f\in \mathcal{F}$  (resp.\ $g\in \mathcal{G}$), we let $n_f\in \mathbb{N}$ (resp.~$n_g\in \mathbb{N}$) denote the arity of $f$ (resp.~$g$), and the order-type $\epsilon_f$ (resp.~$\epsilon_g$) on $n_f$ (resp.~$n_g$)  indicate whether the $i$th coordinate of $f$ (resp.\ $g$) is positive ($\epsilon_f(i) = 1$,  $\epsilon_g(i) = 1$) or  negative ($\epsilon_f(i) = \partial$,  $\epsilon_g(i) = \partial$). The order-theoretic motivation for this partition is that the algebraic interpretations of $\mathcal{F}$-connectives (resp.\ $\mathcal{G}$-connectives), preserve finite joins (resp.\ meets) in each positive coordinate and reverse finite meets (resp.\ joins) in each negative coordinate.

				
For any term $s(p_1,\ldots p_n)$, any order type $\epsilon$ over $n$, and any $1 \leq i \leq n$, an \emph{$\epsilon$-critical node} in a signed generation tree of $s$ is a leaf node $+p_i$ with $\epsilon(i) = 1$ or $-p_i$ with $\epsilon(i) = \partial$. An $\epsilon$-{\em critical branch} in the tree is a branch ending in an $\epsilon$-critical node. For any term $s(p_1,\ldots p_n)$ and any order type $\epsilon$ over $n$, we say that $+s$ (resp.\ $-s$) {\em agrees with} $\epsilon$, and write $\epsilon(+s)$ (resp.\ $\epsilon(-s)$), if every leaf in the signed generation tree of $+s$ (resp.\ $-s$) is $\epsilon$-critical.
				 We will also write $+s'\prec \ast s$ (resp.\ $-s'\prec \ast s$) to indicate that the subterm $s'$ inherits the positive (resp.\ negative) sign from the signed generation tree $\ast s$. Finally, we will write $\epsilon(s') \prec \ast s$ (resp.\ $\epsilon^\partial(s') \prec \ast s$) to indicate that the signed subtree $s'$, with the sign inherited from $\ast s$, agrees with $\epsilon$ (resp.\ with $\epsilon^\partial$).
				\begin{definition}[\textbf{Signed Generation Tree}]
					\label{def: signed gen tree}
					The \emph{positive} (resp.\ \emph{negative}) {\em generation tree} of any $\mathcal{L}_\mathrm{MT}$-term $s$ is defined by labelling the root node of the generation tree of $s$ with the sign $+$ (resp.\ $-$), and then propagating the labelling on each remaining node as follows:
					\begin{itemize}
						\item[] For any node labelled with $\ell\in \mathcal{F}\cup \mathcal{G}$ of arity $n_\ell\geq 1$, and for any $1\leq i\leq n_\ell$, assign the same (resp.\ the opposite) sign to its $i$th child node if $\epsilon_\ell(i) = 1$ (resp.\ if $\epsilon_\ell(i) = \partial$).
					\end{itemize}
					Nodes in signed generation trees are \emph{positive} (resp.\ \emph{negative}) if are signed $+$ (resp.\ $-$).
				\end{definition}
				
		\begin{definition}[\textbf{Good branch}]
					\label{def:good:branch}
					Nodes in signed generation trees will be called \emph{$\Delta$-adjoints}, \emph{syntactically left residual (SLR)}, \emph{syntactically right residual (SRR)}, and \emph{syntactically right adjoint (SRA)}, according to the specification given in Table \ref{Join:and:Meet:Friendly:Table}.
					A branch in a signed generation tree $\ast s$, with $\ast \in \{+, - \}$, is called a \emph{good branch} if it is the concatenation of two paths $P_1$ and $P_2$, one of which may possibly be of length $0$, such that $P_1$ is a path from the leaf consisting (apart from variable nodes) only of PIA-nodes\footnote{For explanations of our choice of terminologies here, we refer to \cite[Remark 3.24]{PaSoZh16}.}, and $P_2$ consists (apart from variable nodes) only of Skeleton-nodes. 
					\begin{table}[\here]
						\begin{center}
							\begin{tabular}{| c | c |}
								\hline
								Skeleton &PIA\\
								\hline
								$\Delta$-adjoints & SRA \\
								\begin{tabular}{ c c c c c c c }
									$+$ &$\kor$ &$\ork$ &$\aor$ &$\kand$ &$\andk$ &$\aand$\\
									$-$ & $\kand$ &$\andk$ &$\aand$ &$\kor$ &$\ork$ &$\aor$\\
									\hline
								\end{tabular}
								&
								\begin{tabular}{c c c c c c c c c}
									$+$ & $\ktop$ & $\topk$ & $\aatop$ &$\kand$ &$\andk$ &$\aand$ &$e_{\wn}$ &$\iota$ \\
									$-$ & $\kbot$ & $\botk$ & $\abot$ & $\kor$ &$\ork$ &$\aor$ &$e_{\oc}$ &$\gamma$  \\
									\hline
								\end{tabular}
								\\
								SLR &SRR\\
								\begin{tabular}{c c c c c c c}
									$+$ & $\phantom{\kbot}$ & $\phantom{\kbot}$ & $\kbot$ & $\botk$ & $\abot$ &$f$  \\
									$-$ & && $\ktop$ & $\topk$ & $\aatop$ & $g$   \\
								\end{tabular}
								&\begin{tabular}{c c c c c c c}
									$+$ &$\phantom{\vee}$&$\phantom{\vee}$&$\phantom{\vee}$ &$g$  &$\phantom{\vee}$ & with $n_g  = 2$\\
									$-$ &$\phantom{\vee}$&$\phantom{\vee}$&$\phantom{\wedge}$ &$f$  &$\phantom{\vee}$& with $n_f  = 2$\\
								\end{tabular}
								\\
								\hline
							\end{tabular}
						\end{center}
						\caption{Skeleton and PIA nodes.}\label{Join:and:Meet:Friendly:Table}
						\vspace{-1em}
					\end{table}
				\end{definition}
	
				\begin{center}
		\begin{tikzpicture}
		\draw (-5,-1.5) -- (-3,1.5) node[above]{\Large$+$} ;
		\draw (-5,-1.5) -- (-1,-1.5) ;
		\draw (-3,1.5) -- (-1,-1.5);
		\draw (-5,0) node{Skeleton} ;
		\draw[dashed] (-3,1.5) -- (-4,-1.5);
		\draw[dashed] (-3,1.5) -- (-2,-1.5);
		\draw (-4,-1.5) --(-4.8,-3);
		\draw (-4.8,-3) --(-3.2,-3);
		\draw (-3.2,-3) --(-4,-1.5);
		\draw[dashed] (-4,-1.5) -- (-4,-3);
		\draw[fill] (-4,-3) circle[radius=.1] node[below]{$+p$};
		\draw
		(-2,-1.5) -- (-2.8,-3) -- (-1.2,-3) -- (-2,-1.5);
		\fill[pattern=north east lines]
		(-2,-1.5) -- (-2.8,-3) -- (-1.2,-3);
		\draw (-2,-3.25)node{$s_1$};
		\draw (-5,-2.25) node{PIA} ;
		\draw (0,1.8) node{$\leq$};
		\draw (5,-1.5) -- (3,1.5) node[above]{\Large$-$} ;
		\draw (5,-1.5) -- (1,-1.5) ;
		\draw (3,1.5) -- (1,-1.5);
		\draw (5,0) node{Skeleton} ;
		\draw[dashed] (3,1.5) -- (4,-1.5);
		\draw[dashed] (3,1.5) -- (2,-1.5);
		\draw (2,-1.5) --(2.8,-3);
		\draw (2.8,-3) --(1.2,-3);
		\draw (1.2,-3) --(2,-1.5);
		\draw[dashed] (2,-1.5) -- (2,-3);
		\draw[fill] (2,-3) circle[radius=.1] node[below]{$+p$};
		\draw
		(4,-1.5) -- (4.8,-3) -- (3.2,-3) -- (4, -1.5);
		\fill[pattern=north east lines]
		(4,-1.5) -- (4.8,-3) -- (3.2,-3) -- (4, -1.5);
		\draw (4,-3.25)node{$s_2$};
		\draw (5,-2.25) node{PIA} ;
		\end{tikzpicture}
	\end{center}

				\begin{definition}[\textbf{Analytic inductive inequalities}]
	\label{def:analytic inductive ineq}
					For any order type $\epsilon$ and any irreflexive and transitive relation $<_\Omega$ on $p_1,\ldots p_n$, the signed generation tree $*s$ $(* \in \{-, + \})$ of a term $s(p_1,\ldots p_n)$ is \emph{analytic $(\Omega, \epsilon)$-inductive} if
					\begin{enumerate}
						\item  every branch of $*s$ is good (cf.\ Definition \ref{def:good:branch});
						\item for all $1 \leq i \leq n$, every $m$-ary SRR-node occurring in  any $\epsilon$-critical branch with leaf $p_i$ is of the form $ \circledast(s_1,\dots,s_{j-1},\beta,s_{j+1}\ldots,s_m)$, where for any $h\in\{1,\ldots,m\}\setminus j$: 
						\begin{enumerate}
							\item $\epsilon^\partial(s_h) \prec \ast s$ (cf.\ discussion before Definition \ref{def:good:branch}), and
							%
							\item $p_k <_{\Omega} p_i$ for every $p_k$ occurring in $s_h$ and for every $1\leq k\leq n$.
						\end{enumerate}

					\end{enumerate}
					
					We will refer to $<_{\Omega}$ as the \emph{dependency order} on the variables. An inequality $s \leq t$ is \emph{analytic $(\Omega, \epsilon)$-inductive} if the signed generation trees $+s$ and $-t$ are analytic $(\Omega, \epsilon)$-inductive. An inequality $s \leq t$ is \emph{analytic inductive} if is analytic $(\Omega, \epsilon)$-inductive for some $\Omega$ and $\epsilon$.
				\end{definition}

In each setting in which they are defined, analytic inductive inequalities are 	a subclass of inductive inequalities (cf.~\cite{GMPTZ}). In their turn, inductive inequalities are {\em canonical} (that is, preserved under canonical extensions, as defined in each setting). Hence, the following is an immediate consequence of  general results on the canonicity of inductive inequalities.
\begin{theorem}
\label{theor: analytic inductive is canonical}
Analytic inductive $\mathcal{L}_{\mathrm{MT}}$-inequalities are canonical.
\end{theorem}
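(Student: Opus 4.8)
The statement to be proved, Theorem~\ref{theor: analytic inductive is canonical}, asserts that analytic inductive $\mathcal{L}_{\mathrm{MT}}$-inequalities are canonical. As the paragraph just before the statement already indicates, the intended route is not to reprove canonicity from scratch but to derive it as a corollary of the general canonicity theory for (multi-type) normal distributive lattice expansions developed in the unified correspondence literature cited in the excerpt (notably \cite{GMPTZ} and \cite{GrecoPalmigianoMultiTypeAlgebraicProofTheory}). So the first step is to record the inclusion \emph{analytic inductive $\subseteq$ inductive}: every inequality that is analytic $(\Omega,\epsilon)$-inductive in the sense of Definition~\ref{def:analytic inductive ineq} is in particular $(\Omega,\epsilon)$-inductive in the sense of the general definition. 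This is essentially immediate by unravelling the two definitions --- the ``good branch'' condition together with the restriction on SRR-nodes on $\epsilon$-critical branches is a strengthening of the corresponding inductive-inequality conditions --- but one should check that the partition of the $\mathcal{L}_{\mathrm{MT}}$-connectives into the families $\mathcal{F}$ and $\mathcal{G}$ given in Appendix~\ref{sec:analytic inductive ineq} is the correct one, i.e.\ that each listed $\mathcal{F}$-connective (resp.\ $\mathcal{G}$-connective) really does preserve finite joins (resp.\ meets) in positive coordinates and reverse finite meets (resp.\ joins) in negative coordinates when interpreted in heterogeneous algebras. This last verification is routine from the defining (in)equalities of heterogeneous algebras in Definition~\ref{def:heterogeneous algebras}, and for the adjoint maps $e_\oc,\iota,e_\wn,\gamma$ it is exactly their normality as adjoints.

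The second step is to invoke the general canonicity result: inductive inequalities are canonical, i.e.\ preserved under canonical extensions, in the multi-type setting. Here I would cite the relevant statement from \cite{GrecoPalmigianoMultiTypeAlgebraicProofTheory} (the multi-type generalization of \cite[Theorem~19]{GMPTZ}), after checking that the multi-type signature at hand --- three sorts $\mathsf{L}$, $\mathsf{K}_\oc$, $\mathsf{K}_\wn$, each a normal distributive lattice expansion, with the heterogeneous connectives $e_\oc,\iota,e_\wn,\gamma,\KKrarr,\KKBrarr$ treated as normal multi-type operations --- falls within the scope of that theorem. The canonical extension of a heterogeneous algebra is exactly the one described in Subsection~\ref{ssec: canonical extensions} and Proposition~\ref{prop:canonical extensions}: each component lattice is replaced by its canonical extension, $\mathcal{F}$-connectives by their $\sigma$-extensions, $\mathcal{G}$-connectives by their $\pi$-extensions, so that ALBA's soundness and the usual topological/algebraic canonicity machinery apply uniformly. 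Combining the inclusion from the first step with this general theorem yields canonicity of analytic inductive $\mathcal{L}_{\mathrm{MT}}$-inequalities, which is the claim.

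The part that needs the most care --- and hence the ``main obstacle'' --- is making precise that the multi-type environment genuinely satisfies the hypotheses of the cited general canonicity theorem, rather than the single-type one. Concretely: one must confirm that the order-theoretic inputs ALBA relies on (adjunction rules, Ackermann rules, the distinction between Skeleton and PIA nodes, minimal/maximal valuations, and the $\sigma$/$\pi$ choice dictated by the $\mathcal{F}/\mathcal{G}$ partition) are all insensitive to the fact that the connectives here have heterogeneous domains and codomains --- precisely the point flagged in Subsection~\ref{ssec:soundness}, where it is noted that ``the fact that some of these maps are not internal operations but have different domains and codomains does not make any substantial difference.'' I would therefore structure the proof as: (i) a short lemma recording analytic inductive $\Rightarrow$ inductive for $\mathcal{L}_{\mathrm{MT}}$; (ii) a remark that the multi-type heterogeneous signature is a normal multi-type distributive lattice expansion in the sense of \cite{GrecoPalmigianoMultiTypeAlgebraicProofTheory}, with canonical extensions as in Proposition~\ref{prop:canonical extensions}; (iii) invocation of the multi-type canonicity theorem from that reference. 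The bulk of the writing is bookkeeping; no genuinely new mathematical argument is required, since the heavy lifting (the correspondence-theoretic canonicity proof) is imported wholesale.
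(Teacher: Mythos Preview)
Your proposal is correct and follows essentially the same route as the paper: the paper simply records that analytic inductive inequalities form a subclass of inductive inequalities (cf.~\cite{GMPTZ}), that inductive inequalities are canonical by the general unified-correspondence results, and states the theorem as an immediate consequence. Your write-up is a more explicit unpacking of exactly this argument, with the additional (and appropriate) care of checking that the multi-type heterogeneous signature falls within the scope of the cited canonicity theorem.
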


\section{Background on canonical extensions}
\label{sec: background can ext}
In the present section, we report on basic notions and facts of canonical extensions of bounded lattices which are used in the present paper. Our presentation is based on \cite{CoPa:non-dist}. The proofs of many basic properties can be found in \cite{gehrke2001bounded,DGP05}.
\begin{definition}
\label{def:can:ext}
				The \emph{canonical extension} of a bounded  lattice $L$ is a complete  lattice $L^\delta$ containing $L$ as a sublattice, such that:
				\begin{enumerate}
					\item \emph{(denseness)} every element of $L^\delta$ can be expressed both as a join of meets and as a meet of joins of elements from $L$;
					\item \emph{(compactness)} for all $S,T \subseteq L$, if $\bigwedge S \leq \bigvee T$ in $L^\delta$, then $\bigwedge F \leq \bigvee G$ for some finite sets $F \subseteq S$ and $G\subseteq T$.
				\end{enumerate}
				\end{definition}
The canonical extension $L^\delta$ of a (distributive) lattice $L$ is a \emph{perfect} (distributive) lattice, i.e.\ a complete (and completely distributive) lattice which is completely join-generated by its completely join-irreducible elements and completely meet-generated by its completely meet-irreducible elements.\footnote{An element $j\in L$ is {\em completely join-irreducible} if $j\neq \bot$ and  for every $S\subseteq L$, if $j\leq \bigvee S$ then $j\in S$. We let $J^{\infty}(L)$ denote the set of the completely join-irreducible elements of $L$. Completely meet-irreducible elements are defined order-dually, and their collection is denoted by $M^{\infty}(L)$.} Moreover, canonical extensions are unique up to isomorphisms fixing the original algebra. An element $k \in L^\delta$ (resp.~$o\in L^\delta$) is \emph{closed} (resp.\ \emph{open}) if is the meet (resp.\ join) of some subset of $L$. Let $K(L^\delta)$ (resp.\ $O(L^\delta)$) be the set of closed (resp.\ open) elements of $L^\delta$. It is easy to see that the denseness condition in Definition \ref{def:can:ext} implies that $J^{\infty}(L^\delta)\subseteq K (L^\delta)$ and $M^{\infty}(L^\delta)\subseteq O (L^\delta)$.

\begin{definition}
\label{def:f sigma and f pi}
For every unary, order-preserving map $f : L \to M$ between bounded lattices, the $\sigma$-{\em extension} of $f$ is defined firstly by declaring, for every $k\in K(L^\delta)$,
$$f^\sigma(k):= \bigwedge\{ f(a)\mid a\in L\mbox{ and } k\leq a\},$$ and then, for every $u\in L^\delta$,
$$f^\sigma(u):= \bigvee\{ f^\sigma(k)\mid k\in K(L^\delta)\mbox{ and } k\leq u\}.$$
The $\pi$-{\em extension} of $f$ is defined firstly by declaring, for every $o\in O(L^\delta)$,
$$f^\pi(o):= \bigvee\{ f(a)\mid a\in L\mbox{ and } a\leq o\},$$ and then, for every $u\in L^\delta$,
$$f^\pi(u):= \bigwedge\{ f^\pi(o)\mid o\in O(L^\delta)\mbox{ and } u\leq o\}.$$
\end{definition}
It is easy to see that the $\sigma$- and $\pi$-extensions of monotone maps  are monotone. Moreover,
the $\sigma$-extension of a map which preserves finite joins (resp.~reverses  finite  meets) will preserve {\em arbitrary} joins (resp.~reverse  {\em arbitrary}  meets). Because canonical extensions are complete lattices, this implies (cf.~\cite[Proposition 7.34]{DaveyPriestley2002}) that the $\sigma$-extension of any such map is a left (Galois) adjoint, that is, its right (resp.~Galois) adjoint exists.

Dually, the $\pi$-extension of a map which preserves finite meets (resp.~reverses  finite  joins) will preserve {\em arbitrary} meets (resp.~reverse  {\em arbitrary}  joins), and hence the $\pi$-extension of any such map is a right (Galois) adjoint, that is, its left (Galois) adjoint exists.

Finally, if  $f: L\to M$ and $g:M\to L$ are such that $f\dashv g$, then $f^\sigma\dashv g^\pi$.

The definitions above apply also to $n$-ary operations which are $\epsilon$-monotone\footnote{That is, are monotone (resp.~antitone) in each coordinate $i$ such that $\epsilon(i) = 1$ (resp.~$\epsilon(i) = \partial$).} for some order type $\epsilon$  over $n$ (cf.\ Section \ref{sec:analytic inductive ineq}). Indeed,
let us first observe that taking  order-duals interchanges closed and open elements:
$K({(L^\delta)}^\partial) = O(L^\delta)$ and $O({(L^\delta)}^\partial) = K(L^\delta)$;  similarly, $K({(L^n)}^\delta) =K(L^\delta)^n$, and $O({(L^n)}^\delta) =O(L^\delta)^n$. Hence,  $K({(L^\delta)}^\epsilon) =\prod_i K(L^\delta)^{\epsilon(i)}$ and $O({(L^\delta)}^\epsilon) =\prod_i O(L^\delta)^{\epsilon(i)}$ for every lattice $L$ and every order-type $\epsilon$ over any $n\in \mathbb{N}$, where
\begin{center}
\begin{tabular}{cc}
$K(L^\delta)^{\epsilon(i)}: =\begin{cases}
K(L^\delta) & \mbox{if } \epsilon(i) = 1\\
O(L^\delta) & \mbox{if } \epsilon(i) = \partial\\
\end{cases}
$ &
$O(L^\delta)^{\epsilon(i)}: =\begin{cases}
O(L^\delta) & \mbox{if } \epsilon(i) = 1\\
K(L^\delta) & \mbox{if } \epsilon(i) = \partial.\\
\end{cases}
$\\
\end{tabular}
\end{center}
From these observations it immediately follows that taking the canonical extension of a lattice  commutes with taking order-duals and products, namely:
${(L^\partial)}^\delta = {(L^\delta)}^\partial$ and ${(L_1\times L_2)}^\delta = L_1^\delta\times L_2^\delta$.
Hence,  ${(L^\partial)}^\delta$ can be  identified with ${(L^\delta)}^\partial$,  ${(L^n)}^\delta$ with ${(L^\delta)}^n$, and
${(L^\epsilon)}^\delta$ with ${(L^\delta)}^\epsilon$ for any order type $\epsilon$ over $n$, where $L^\epsilon: = \prod_{i = 1}^n L^{\epsilon(i)}$.
These identifications make it possible to obtain the definition of $\sigma$-and $\pi$-extensions of $\epsilon$-monotone operations of any arity $n$ and order-type $\epsilon$ over $n$ by instantiating the corresponding definitions given above for monotone and unary functions. The $\sigma$- and $\pi$-extensions of the lattice operations coincide with the lattice operations of the canonical extension.  

\begin{definition}
\label{def: normal (d)le}
For any lattice $L$, an  operation $h$ on $L$ of arity $n_h$ is {\em normal} if it is order-preserving or order-reversing in each coordinate, and moreover one of the following conditions holds: (a) $h$ preserves finite (hence possibly empty) joins in each coordinate in which it is order-preserving and reverses finite meets in  each coordinate in which it is order-reversing; (b) $h$ preserves finite (hence possibly empty) meets in each coordinate in which it is order-preserving and reverses finite joins in  each coordinate in which it is order-reversing.

A {\em normal (distributive) lattice expansion} is an algebra $\mathbb{A} = (L, \mathcal{F}, \mathcal{G})$ such that $L$ is a bounded (distributive) lattice and $\mathcal{F}$ and $\mathcal{G}$ are finite (possibly empty) and disjoint sets of operations on $L$ such that every $f\in \mathcal{F}$ is normal and verifies condition (a), and every $g\in \mathcal{G}$ is normal and verifies condition (b).
\end{definition}
Intuitionistic and bi-intuitionistic linear algebras (without exponentials), Heyting, co-Heyting and bi-Heyting algebras are all examples of normal (distributive) lattice expansions: for intuitionistic linear algebras, take $\mathcal{F}: = \{\mand, \mtop\}$ and $\mathcal{G}: = \{\mrarr, \mor, \mbot\}$; for bi-intuitionistic linear algebras, take $\mathcal{F}: = \{\mdrarr, \mand,  \mtop\}$ and $\mathcal{G}: = \{\mrarr, \mor, \mbot\}$; for Heyting algebras, take $\mathcal{F}: = \varnothing$ and $\mathcal{G}: = \{\krarrk\}$; for co-Heyting algebras, take $\mathcal{F}: = \{\kdrarrk\}$ and $\mathcal{G}: = \varnothing$; for bi-Heyting algebras, take $\mathcal{F}: = \{\krarrk\}$ and $\mathcal{G}: = \{\kdrarrk\}$.
\begin{definition}
\label{def:can ext normal le}
The canonical extension of any normal (distributive) lattice expansion $\mathbb{A} = (L, \mathcal{F}, \mathcal{G} )$ is the normal (distributive) lattice expansion $\mathbb{A}^\delta: = (L^\delta, \mathcal{F}^\sigma, \mathcal{G}^\pi)$, where $L^\delta$  is the canonical extension of  $L$ (cf.\ Definition \ref{def:can:ext}), and $\mathcal{F}^\sigma: = \{f^\sigma\mid f\in \mathcal{F}\}$ and  $\mathcal{G}^\pi: = \{g^\pi\mid g\in \mathcal{G}\}$.
 \end{definition}
The definition above applies in a uniform way to any signature, and is motivated by the fact that, as remarked above, it preserves existing residuations/Galois connections among operations in the original signature. Another noticeable, if more technical feature of this definition is its being  independent on whether the original maps are {\em smooth} (i.e.\ their $\sigma$- and $\pi$-extensions coincide). While all unary normal operations are smooth, normal operations of arity higher than 1 might  not be smooth in general.

It follows straightforwardly from the facts above that the classes of linear algebras without exponentials, Heyting, co-Heyting and bi-Heyting algebras are closed under taking canonical extensions.
It also  follows that the canonical extension of a normal LE $\mathbb{A}$ is a {\em perfect} normal LE:
\begin{definition}
\label{def:perfect LE}
A normal LE $\mathbb{A} = (L, \mathcal{F}, \mathcal{G})$ is {\em perfect} if $L$ is a perfect lattice (cf.\ discussion above), and moreover the following infinitary distribution laws are satisfied for each $f\in \mathcal{F}$, $g\in \mathcal{G}$, $1\leq i\leq n_f$ and $1\leq j\leq n_g$: for every $S\subseteq L$,
\begin{center}
\begin{tabular}{c c }
$f(x_1,\ldots, \bigvee S, \ldots, x_{n_f}) =\bigvee \{ f(x_1,\ldots, x, \ldots, x_{n_f}) \mid x\in S \}$  & if $\epsilon_f(i) = 1$\\

$f(x_1,\ldots, \bigwedge S, \ldots, x_{n_f}) =\bigvee \{ f(x_1,\ldots, x, \ldots, x_{n_f}) \mid x\in S \}$  & if $\epsilon_f(i) = \partial$\\

$g(x_1,\ldots, \bigwedge S, \ldots, x_{n_g}) =\bigwedge \{ g(x_1,\ldots, x, \ldots, x_{n_g}) \mid x\in S \}$  & if $\epsilon_g(i) = 1$\\

$g(x_1,\ldots, \bigvee S, \ldots, x_{n_g}) =\bigwedge \{ g(x_1,\ldots, x, \ldots, x_{n_g}) \mid x\in S \}$  & if $\epsilon_g(i) = \partial$.\\

\end{tabular}
\end{center}

\end{definition}
Before finishing the present subsection, let us spell out  the definitions of the extended operations.

Denoting by $\leq^\epsilon$ the product order on $(L^\delta)^\epsilon$, we have for every $f\in \mathcal{F}$, $g\in \mathcal{G}$,  $\overline{u}\in (L^\delta)^{n_f}$ and $\overline{v}\in (L^\delta)^{n_g}$,
\begin{center}
\begin{tabular}{l l}
$f^\sigma (\overline{k}):= \bigwedge\{ f( \overline{a})\mid \overline{a}\in (L^\delta)^{\epsilon_f}\mbox{ and } \overline{k}\leq^{\epsilon_f} \overline{a}\}$ & $f^\sigma (\overline{u}):= \bigvee\{ f^\sigma( \overline{k})\mid \overline{k}\in K({(L^\delta)}^{\epsilon_f})\mbox{ and } \overline{k}\leq^{\epsilon_f} \overline{u}\}$ \\
$g^\pi (\overline{o}):= \bigvee\{ g( \overline{a})\mid \overline{a}\in (L^\delta)^{\epsilon_g}\mbox{ and } \overline{a}\leq^{\epsilon_g} \overline{o}\}$ & $g^\pi (\overline{v}):= \bigwedge\{ g^\pi( \overline{o})\mid \overline{o}\in O({(L^\delta)}^{\epsilon_g})\mbox{ and } \overline{v}\leq^{\epsilon_g} \overline{o}\}$. \\
\end{tabular}
%
\end{center}

Two facts are worth being highlighted, since they follow a pattern which is key to the conservativity argument given in Section \ref{ssec:conservativity}. Firstly, the algebraic completeness of linear logic (without exponentials), intuitionistic, co-intuitionistic and bi-intuitionistic logic,  and the canonical embedding of the algebras corresponding to these logics into their respective canonical extensions immediately give completeness of each of these logics w.r.t.\ the corresponding class of perfect normal LEs, which is condition (a) in the general conservativity argument of which the one given in Section \ref{ssec:conservativity} is an instance. Secondly, the existence of the adjoints and residuals (in each coordinate) of the extensions of the original operations provides  semantic interpretation to all  structural connectives (including to those the operational counterparts of which do not belong to the original signature). For instance, the canonical extension of any Heyting algebra (resp.~co-Heyting algebra) is naturally endowed with a bi-Heyting algebra structure, since the finite distributivity of joins over meets (resp.~meets over joins) implies complete distributivity holds in the canonical extension, which guarantees the existence of the left residual $\kdrarrk$ of $\vee$ (resp.~the right residual $\krarrk$ of $\wedge$) in the canonical extension. Likewise, the finite distributivity of $\mor$ over $\aand$ in any intuitionistic linear algebra without exponentials guarantees the existence of the left residual $\mdrarr$ of $\mor^\pi$ in the canonical extension, which is then naturally endowed with a structure of bi-intuitionistic linear algebra without exponentials. The existence of all adjoints and residuals  makes it possible to interpret the structural rules of the display calculus intended to capture each logic, and verify their soundness,  which is requirement (b) in the general conservativity argument (cf.~Section Section \ref{ssec:conservativity}). 

\section{Inversion lemmas}
\label{sec: inversion lemmas}
In the present section we prove the general inversion lemmas holding in any proper multi-type display calculus (cf.~Definition \ref{def:proper display calculi})
%
Recall that  rule $R$ is invertible if every premise sequent of $R$ may be derived from the conclusion sequent of $R$.
In what follows, we fix an arbitrary multi-type signature $(\mathcal{F}, \mathcal{G})$ 
(generalizing the presentations of Appendixes \ref{sec:analytic inductive ineq} and \ref{sec: background can ext}. See \cite{GMPTZ} for an extended discussion).
The language for (the corresponding fragment of) the associated calculus is
\begin{center}
\begin{tabular}{|c|c|c|c|}
\hline
\mc{2}{|c|}{$H$} & \mc{2}{c|}{$K$} \\
\hline
$f$ & $\phantom{f}$ & $\phantom{g}$ & $g$ \\
\hline
\end{tabular}
\end{center}
In what follows, we omit reference to types, and use  variables $x, y, z$ to denote structural terms of arbitrary type, and variables $a, b, c$ to denote operational terms of arbitrary type. All sequents are understood to be type-uniform (cf.~Section \ref{appendix: cut elim}).
In any proper multi-type display calculus, the introduction rules for any $f \in \mathcal{F}$ and $g \in \mathcal{G}$ have the following shape (cf.~\cite{GMPTZ}):

\begin{center}
\begin{tabular}{c}
\bottomAlignProof
\AX$H (a_1,\ldots, a_{n_f}) \fCenter x$
\LeftLabel{\fns$f_L$}
\UI$f(a_1,\ldots, a_{n_f}) \fCenter x$
\DisplayProof
\ \ \
\bottomAlignProof
\AxiomC{$\Big(x_i \vdash a_i \quad a_j \vdash x_j \mid 1\leq i, j\leq n_f, \varepsilon_f(i) = 1\mbox{ and } \varepsilon_f(j) = \partial \Big)$}
\RightLabel{\fns$f_R$}
\UnaryInfC{$H(x_1,\ldots, x_{n_f}) \vdash f(a_1,\ldots, a_{n})$}
\DisplayProof
 \\
 \\
\bottomAlignProof
\AxiomC{$\Big(x_i \vdash a_i \quad a_j \vdash x_j \mid 1\leq i, j\leq n_g, \varepsilon_g(i) = 1\mbox{ and } \varepsilon_g(j) = \partial \Big)$}
\LeftLabel{\fns$g_L$}
\UnaryInfC{$g(a_1,\ldots, a_{n}) \vdash K(x_1,\ldots, x_{n_g})$}
\DisplayProof
\ \ \
\bottomAlignProof
\AX$x \fCenter K(a_1,\ldots, a_{n_g})$
\RightLabel{\fns$g_R$}
\UI$x \fCenter g(a_1,\ldots, a_{n_g})$
\DisplayProof
 \\
\end{tabular}
\end{center}
In particular, if $n_f = 0 = n_g$, the rules $f_R$ and $g_L$ above reduce to the axioms ($0$-ary rules) $H\vdash f$ and $g\vdash K$. Using these rules (and the standard introduction rules for lattice connectives), the following lemma can be proved by a routine induction on terms:
\begin{lemma}
\label{lemma: identity}
In any proper multi-type display calculus, all sequents $a\vdash a$ are derivable.
\end{lemma}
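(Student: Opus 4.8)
The plan is to prove Lemma~\ref{lemma: identity} by induction on the complexity of the operational term $a$, using only the identity axioms for atomic formulas, the standard introduction rules for the lattice connectives, and the schematic introduction rules $f_L, f_R, g_L, g_R$ displayed above for the $\mathcal{F}$- and $\mathcal{G}$-connectives.

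\emph{Base case.} If $a$ is an atomic formula (of whatever type in the signature that admits atoms), then $a \vdash a$ is an identity axiom of the calculus, so there is nothing to prove. If $a$ is a nullary $\mathcal{F}$-connective $f$ (a constant), then $f_R$ reduces to the axiom $H \vdash f$ where $H$ is the $0$-ary structural constant corresponding to $f$; applying $f_L$ (which for $n_f = 0$ has premise $H \vdash x$ and conclusion $f \vdash x$) with $x := f$ yields $f \vdash f$. Dually, for a nullary $\mathcal{G}$-connective $g$ one first has the axiom $g \vdash K$ from $g_L$ and then applies $g_R$ with $x := g$ to obtain $g \vdash g$. The nullary lattice constants $\top$ and $\bot$ in each type are handled in the same way by their standard rules.

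\emph{Inductive step.} Suppose $a = f(a_1, \dots, a_{n_f})$ with $n_f \geq 1$. By the induction hypothesis, the sequent $a_i \vdash a_i$ is derivable for each $i$. We want to feed these into $f_R$, whose premises are $x_i \vdash a_i$ for coordinates $i$ with $\varepsilon_f(i) = 1$ and $a_j \vdash x_j$ for coordinates $j$ with $\varepsilon_f(j) = \partial$. Instantiate $x_i := a_i$ for $\varepsilon_f(i) = 1$ and $x_j := a_j$ for $\varepsilon_f(j) = \partial$; then every premise of $f_R$ under this instantiation is precisely one of the sequents $a_i \vdash a_i$ supplied by the induction hypothesis, so $f_R$ yields $H(a_1, \dots, a_{n_f}) \vdash f(a_1, \dots, a_{n_f})$ with the structural term $H$ built from exactly the $a_i$'s. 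Now apply $f_L$ with $x := f(a_1, \dots, a_{n_f})$ to this sequent; its premise $H(a_1, \dots, a_{n_f}) \vdash f(a_1, \dots, a_{n_f})$ is exactly what we just derived, and its conclusion is $f(a_1, \dots, a_{n_f}) \vdash f(a_1, \dots, a_{n_f}) = a \vdash a$, as required. The case $a = g(a_1, \dots, a_{n_g})$ is order-dual: derive $g(a_1, \dots, a_{n_g}) \vdash K(a_1, \dots, a_{n_g})$ by $g_L$ fed with the induction-hypothesis sequents, then apply $g_R$ with $x := g(a_1, \dots, a_{n_g})$. Finally, when the outermost connective of $a$ is one of the lattice connectives $\wedge$ or $\vee$ of some type, the derivation of $a \vdash a$ from the subterm identities $a_i \vdash a_i$ is the familiar one using exchange, weakening/contraction as needed and the left- and right-introduction rules for that connective, exactly as in any display calculus.

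\emph{Main obstacle.} The only delicate point is bookkeeping: one must check that the instantiation of the parametric structural variables $x_i$ in $f_R$ (resp.\ $g_L$) by the operational terms $a_i$ is legitimate, i.e.\ that the resulting premises are genuinely the sequents furnished by the induction hypothesis and that type-uniformity is preserved along the way. This is guaranteed by conditions C$_6$ and C$_7$ (closure under substitution for succedent and precedent parts within each type) together with C$_9$ and C$_{10}$ from Definition~\ref{def:proper display calculi}: substituting the structure $a_i$ (of the correct type, by type-uniformity of $a_i \vdash a_i$) for the congruent parametric variable $x_i$ in each coordinate is exactly an instance of these closure conditions, and the position-alikeness condition C$_4$ ensures that the polarity of each coordinate dictates on which side $a_i$ must appear, matching the shape of the induction-hypothesis sequent. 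Hence the routine induction goes through uniformly in the signature $(\mathcal{F}, \mathcal{G})$.
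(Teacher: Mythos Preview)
Your argument is correct and follows exactly the approach the paper indicates: a routine induction on the complexity of the operational term, using the identity axiom for atoms together with the generic $f_L/f_R$ and $g_L/g_R$ introduction rules. The paper does not spell out the details, so your write-up is simply the expected elaboration.

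One small remark: your ``main obstacle'' paragraph is over-justified. The instantiation of the schematic structural variables $x_i$ in $f_R$ (resp.\ $g_L$) by the terms $a_i$ is just an instance of the rule schema; it does not rely on conditions C$_6$, C$_7$, C$_9$, C$_{10}$, which concern closure of \emph{derivations} under substitution of congruent parameters for the cut-elimination argument, not the legitimacy of instantiating a rule. Also, in this general framework the lattice connectives of each type are themselves $\mathcal{F}$- or $\mathcal{G}$-connectives, so your separate clause for $\wedge$ and $\vee$ is redundant (though harmless).
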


\begin{lemma}
\label{lemma: invertibility}
In any proper multi-type display calculus, the left-introduction rule of any $f \in \mathcal{F}$  and the right-introduction rule of any $g \in \mathcal{G}$ are invertible.
\end{lemma}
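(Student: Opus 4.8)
The plan is to prove invertibility of $f_L$ (for $f \in \mathcal{F}$) and of $g_R$ (for $g \in \mathcal{G}$) simultaneously, since the two cases are perfectly order-dual; I will write out the argument for $f_L$ and indicate that the $g_R$ case follows by the dual reasoning. The goal is: given a derivation of $f(a_1,\dots,a_{n_f}) \vdash x$, produce a derivation of $H(a_1,\dots,a_{n_f}) \vdash x$. The key idea is the standard "cut with the principal instance" trick, made legitimate here because the calculus is proper and hence enjoys cut elimination (Theorem \ref{theor:cut elim}), so a proof using cuts can always be converted to a cut-free one; but in fact we do not even need to invoke cut elimination — we just need cut to be a rule of the calculus and Lemma \ref{lemma: identity} to be available.

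First I would record the relevant instance of the right-introduction rule $f_R$. By Lemma \ref{lemma: identity}, each sequent $a_i \vdash a_i$ is derivable; feeding these (in the appropriate polarity, i.e.\ using $a_i \vdash a_i$ in the premises indexed by $\varepsilon_f(i)=1$ and $a_j \vdash a_j$ in those indexed by $\varepsilon_f(j)=\partial$) into $f_R$ yields a derivation of $H(a_1,\dots,a_{n_f}) \vdash f(a_1,\dots,a_{n_f})$. Here I must be a little careful about the shape of $f_R$ as displayed in the excerpt: the premises are written as $x_i \vdash a_i$ and $a_j \vdash x_j$, so instantiating every $x_i$ and $x_j$ to the corresponding $a_i$, $a_j$ gives exactly the premises $a_i \vdash a_i$ and $a_j \vdash a_j$, and the conclusion becomes $H(a_1,\dots,a_{n_f}) \vdash f(a_1,\dots,a_{n_f})$, as wanted. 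If $n_f = 0$ this step is just the axiom $H \vdash f$.

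Next I would combine this with the hypothesised derivation of $f(a_1,\dots,a_{n_f}) \vdash x$ by a single application of the cut rule on the (type-uniform) operational formula $f(a_1,\dots,a_{n_f})$:
\begin{center}
\AX$H(a_1,\dots,a_{n_f}) \fCenter f(a_1,\dots,a_{n_f})$
\AX$f(a_1,\dots,a_{n_f}) \fCenter x$
\RightLabel{\fns Cut}
\BI$H(a_1,\dots,a_{n_f}) \fCenter x$
\DisplayProof
\end{center}
This produces the desired derivation of the premise of $f_L$ from its conclusion, proving that $f_L$ is invertible. (The cut is legitimate: both cut-sequents are type-uniform by C$_9$, and the calculus contains a cut rule for that type by the definition of proper multi-type display calculus; if one wishes the resulting derivation to be cut-free, one applies Theorem \ref{theor:cut elim}.) The case of $g_R$ is entirely dual: one derives $g(a_1,\dots,a_{n_g}) \vdash K(a_1,\dots,a_{n_g})$ from the instances of $g_L$ fed with identity axioms via Lemma \ref{lemma: identity}, and then cuts it against the given derivation of $x \vdash g(a_1,\dots,a_{n_g})$ on the formula $g(a_1,\dots,a_{n_g})$ to obtain $x \vdash K(a_1,\dots,a_{n_g})$.

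The only subtlety — and the part I would be most careful about — is bookkeeping with the mixed order-types: making sure that the identity axioms are inserted into the correct premise slots of $f_R$ (respectively $g_L$) according to whether $\varepsilon_f(i)$ is $1$ or $\partial$, and that type-uniformity is respected throughout (so that the single cut we use is an instance of a genuine cut rule of the calculus, as guaranteed by C$_{10}$). There is no real obstacle beyond this; the argument is a direct, uniform application of the identity lemma plus one cut, and does not depend on the specific connectives of D.LL.
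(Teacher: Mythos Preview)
Your proof is correct and essentially identical to the paper's own argument: derive $H(a_1,\dots,a_{n_f}) \vdash f(a_1,\dots,a_{n_f})$ by feeding identity axioms into $f_R$ (Lemma~\ref{lemma: identity}), then cut against the given derivation of $f(a_1,\dots,a_{n_f}) \vdash x$; the $g_R$ case is dual. The only superfluous remark is the appeal to Theorem~\ref{theor:cut elim} to obtain a cut-free derivation --- invertibility only requires derivability, and cut is a primitive rule of the calculus, so this step is unnecessary (and the paper does not make it).
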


\begin{proof}
 Using Lemma \ref{lemma: identity}, the following derivation proves the claim  for $f$.

\begin{center}
\AxiomC{$\Big(a_i \vdash a_i \quad a_j \vdash a_j \mid 1\leq i, j\leq n_f, \varepsilon_f(i) = 1\mbox{ and } \varepsilon_f(j) = \partial \Big)$}
\UnaryInfC{$H(a_1,\ldots, a_{n_f})\vdash f(a_1,\ldots, a_{n})$}
\AXC{\ \ $\vdots$ \raisebox{1mm}{$\pi$}}
\noLine
\UI$f(a_1,\ldots, a_{n_f}) \fCenter x$
\RightLabel{Cut}
\BI$H (a_1,\ldots, a_{n_f}) \fCenter x$
\DisplayProof
\end{center}
The proof of the remaining part of the statement is analogous.
\end{proof}
Using Lemma \ref{lemma: invertibility}, and suitably making use of the display property of proper multi-type display calculi, one can prove the following
\begin{corollary}
\begin{enumerate}
In any proper multi-type display calculus,
\item if $(x \fCenter y)[f(a_1,\ldots, a_{n_f})]^{pre}$ is derivable,\footnote{The notation $(x \fCenter y)[f(a_1,\ldots, a_{n_f})]^{pre}$ (resp.~$(x \fCenter y)[g(a_1,\ldots, a_{n_g})]^{succ}$) indicates that $f(a_1,\ldots, a_{n_f})$ (resp.~$g(a_1,\ldots, a_{n_g})$) occurs as a substructure of $x \fCenter y$ in precedent (resp.~succedent) position.} so is $(x \fCenter y)[H(a_1,\ldots, a_{n_f})]^{pre}$;
\item if $(x \fCenter y)[g(a_1,\ldots, a_{n_g})]^{succ}$ is derivable, so is $(x \fCenter y)$ $[K(a_1,\ldots, a_{n_g})]^{succ}$.
    \end{enumerate}
\end{corollary}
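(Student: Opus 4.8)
The plan is to prove the corollary by combining the invertibility of $f_L$ and $g_R$ (Lemma \ref{lemma: invertibility}) with the display property, proceeding by the usual ``display, invert, re-display'' manoeuvre. I would treat the two items separately, but they are order-dual, so I will carry out item 1 in detail and observe that item 2 follows by symmetry (swapping precedent/succedent and $\mathcal{F}/\mathcal{G}$ throughout).

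First I would fix a derivable sequent $(x\vdash y)[f(a_1,\dots,a_{n_f})]^{pre}$, in which the displayed occurrence of $f(\bar a)$ sits in precedent position. By the display property (condition C$_5$ and the presence of display postulates for all structural connectives, part of the definition of proper multi-type display calculus), there is a provably equivalent sequent $f(a_1,\dots,a_{n_f})\vdash z$ for some structure $z$, obtained by isolating the substructure $f(\bar a)$ in display; derivability is preserved in both directions because display postulates are invertible rules. Now I apply Lemma \ref{lemma: invertibility}: since $f_L$ is invertible and $f(\bar a)\vdash z$ is derivable, so is $H(a_1,\dots,a_{n_f})\vdash z$. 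Finally I run the display equivalence backwards — using exactly the same sequence of display postulates that brought $f(\bar a)$ into display, but now with $H(\bar a)$ in its place (legitimate because the display postulates are schematic in the displayed substructure and $H(\bar a)$ and $f(\bar a)$ sit in the same position of the same context, by C$_2$ and C$_4$) — to conclude that $(x\vdash y)[H(a_1,\dots,a_{n_f})]^{pre}$ is derivable. Item 2 is proved the same way, displaying $g(\bar a)$ in succedent position, inverting $g_R$ to replace it by $K(\bar a)$, and re-displaying.

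The one point that requires a little care, and which I expect to be the main (though modest) obstacle, is justifying that the ``re-display'' step is sound: one must check that the chain of display postulates used to put $f(\bar a)$ (resp.\ $g(\bar a)$) into display depends only on the {\em context} $(x\vdash y)[\ -\ ]$ and on the {\em position} of the hole, not on the particular operational term occupying it. This is precisely guaranteed by the structure of proper display calculi — display postulates are pure structural rules, their applicability is determined by the structural skeleton, and conditions C$_2$, C$_3$, C$_4$ ensure congruent parameters keep their shape, type and position — so substituting $H(\bar a)$ (a structure of the same type as $f(\bar a)$, since the calculus is type-uniform) for $f(\bar a)$ leaves every display step applicable. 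With that observation in hand the argument is a routine induction-free chain of rule applications, and no further subtleties arise.

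\begin{proof}
We prove item 1; item 2 is proved analogously, exchanging the roles of precedent and succedent position and of $\mathcal{F}$ and $\mathcal{G}$, and using the invertibility of $g_R$ in place of that of $f_L$.

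Suppose $(x\vdash y)[f(a_1,\dots,a_{n_f})]^{pre}$ is derivable. Since the displayed occurrence of $f(a_1,\dots,a_{n_f})$ is in precedent position, by the display property there is a finite sequence of applications of display postulates transforming $(x\vdash y)[f(a_1,\dots,a_{n_f})]^{pre}$ into a sequent of the form $f(a_1,\dots,a_{n_f})\vdash z$, and this sequent is therefore derivable. By Lemma \ref{lemma: invertibility}, $f_L$ is invertible, so from the derivability of $f(a_1,\dots,a_{n_f})\vdash z$ we obtain a derivation of $H(a_1,\dots,a_{n_f})\vdash z$. Now apply the same sequence of display postulates in reverse: since display postulates are pure structural rules whose applicability depends only on the structural context and the position of the displayed substructure, and since $H(a_1,\dots,a_{n_f})$ occupies the same position and has the same type as $f(a_1,\dots,a_{n_f})$ (the calculus being type-uniform), each step remains applicable with $H(a_1,\dots,a_{n_f})$ in place of $f(a_1,\dots,a_{n_f})$. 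Hence $(x\vdash y)[H(a_1,\dots,a_{n_f})]^{pre}$ is derivable, as required.
\end{proof}
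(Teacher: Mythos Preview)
Your proof is correct and follows exactly the approach the paper indicates: the paper does not spell out a proof but simply says the corollary follows ``using Lemma \ref{lemma: invertibility}, and suitably making use of the display property of proper multi-type display calculi,'' which is precisely the display--invert--re-display argument you have written out.
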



\section{Deriving Hilbert-style axioms and rules for exponentials}
\label{sec: deriving axioms}
The relevant axioms and rule capturing the behaviour of $\oc$ were considered in \cite{Avron1988, Troelstra1992}. In \cite{Ono1993} also the algebraic inequalites capturing the behaviour of $\wn$ were considered. Below we reproduce the axioms and rule for $\oc$ and the axioms for $\wn$ corresponding to the algebraic inequalities:

\begin{center}
\begin{tabular}{rl | rl}
\mc{4}{c}{\ \ \ \ \ \ \ \ \ \ \ \ \ Axioms} \\
A1. & $B \mrarr (\oc A \mrarr B)$                                            &  &  \\
A2. & $(\oc A\mrarr (\oc A \mrarr B)) \mrarr (\oc A\mrarr B)$  &  &  \\
A3. & $\oc (A \mrarr B) \mrarr (\oc A \mrarr \oc B)$                & $\oc (A \mrarr B) \vdash \wn A \mrarr \wn B$ & A6. \\
A4. & $\oc A\mrarr A$                                                             & $A \mrarr \wn A$ & A7. \\
A5. & $\oc A\mrarr \oc \oc A$                                                 & $\wn \wn A\mrarr \wn A$ & A8. \\
             & Rule                                                                             & $\wn \mbot \mrarr \mbot, \mbot \mrarr \wn \mbot$ & A9. \\
$\oc$R & $\vdash A \ \Rightarrow \ \, \vdash \oc A$                & $\mbot \mrarr \wn A$ & A10. \\
\end{tabular}
\end{center}

The rule $\oc$R is derivable in D.LL as follows:

\begin{center}
{\fns 
\begin{tabular}{c}
\AX$\MTOPBOT \fCenter A$
\LeftLabel{\fns nec}
\UI$\WCIRCW \KTOPBOTK \fCenter A$
\UI$\KTOPBOTK \fCenter \BCIRCB A$
\UI$\KTOPBOTK \fCenter \bboxb A$
\UI$\WCIRCW \KTOPBOTK \fCenter \wdiaw \bboxb A$
\LeftLabel{\fns conec}
\UI$\MTOPBOT \fCenter \wdiaw \bboxb A$
\DisplayProof
\end{tabular}
 }
\end{center}

All the previous axioms are derivable in D.LL as follows.

\begin{itemize}
\item[A1.] $B \mrarr (\oc A \mrarr B)$\ \ and\ \ A2.\ \ $(\oc A\mrarr (\oc A \mrarr B)) \mrarr (\oc A\mrarr B)$
\end{itemize}
\begin{center}
{\fns 
\begin{tabular}{cc}
\AX$B \fCenter B$
\LeftLabel{\fns $W_L$}
\UI$B \MANDOR \WCIRCW \bboxb A \fCenter B$
\UI$\WCIRCW \bboxb A \fCenter B \MARR B$
\UI$\wdiaw \bboxb A \fCenter B \MARR B$
\LeftLabel{\fns def}
\UI$\oc A \fCenter B \MARR B$
\UI$B \MANDOR \oc A \fCenter B$
\UI$\oc A \MANDOR B \fCenter B$
\UI$B \fCenter \oc A \MARR B$
\UI$B \fCenter \oc A \mrarr B$
\DisplayProof
 & 
\AX$A \fCenter A$
\UI$\bboxb A \fCenter \BCIRCB A$
\UI$\bboxb A \fCenter \bboxb A$
\UI$\WCIRCW \bboxb A \fCenter \wdiaw \bboxb A$
\RightLabel{\fns def}
\UI$\WCIRCW \bboxb A \fCenter \oc A$
\AX$A \fCenter A$
\UI$\bboxb A \fCenter \BCIRCB A$
\UI$\bboxb A \fCenter \bboxb A$
\UI$\WCIRCW \bboxb A \fCenter \wdiaw \bboxb A$
\RightLabel{\fns def}
\UI$\WCIRCW \bboxb A \fCenter \oc A$
\AX$B \fCenter B$
\BI$\oc A \mrarr B \fCenter \WCIRCW \bboxb A \MARR B$
\BI$\oc A \mrarr (\oc A \mrarr B) \fCenter \WCIRCW \bboxb A \MARR (\WCIRCW \bboxb A \MARR B)$
\UI$\WCIRCW \bboxb A \MANDOR \oc A \mrarr (\oc A \mrarr B) \fCenter \oc A \MARR B$
\UI$\WCIRCW \bboxb A \MANDOR (\WCIRCW \bboxb A \MANDOR \oc A \mrarr (\oc A \mrarr B)) \fCenter B$
\UI$(\WCIRCW \bboxb A \MANDOR \WCIRCW \bboxb A) \MANDOR \oc A \mrarr (\oc A \mrarr B) \fCenter B$
\UI$\oc A \mrarr (\oc A \mrarr B) \MANDOR (\WCIRCW \bboxb A \MANDOR \WCIRCW \bboxb A) \fCenter B$
\UI$\WCIRCW \bboxb A \MANDOR \WCIRCW \bboxb A \fCenter \oc A \mrarr (\oc A \mrarr B) \MARR B$
\LeftLabel{\fns $C_L$}
\UI$\WCIRCW \bboxb A \fCenter \oc A \mrarr (\oc A \mrarr B) \MARR B$
\UI$\wdiaw \bboxb A \fCenter \oc A \mrarr (\oc A \mrarr B) \MARR B$
\LeftLabel{\fns def}
\UI$\oc A \fCenter \oc A \mrarr (\oc A \mrarr B) \MARR B$
\UI$\oc A \mrarr (\oc A \mrarr B) \MANDOR \oc A \fCenter B$
\UI$\oc A \MANDOR \oc A \mrarr (\oc A \mrarr B) \fCenter B$
\UI$\oc A \mrarr (\oc A \mrarr B) \fCenter \oc A \MARR B$
\UI$\oc A \mrarr (\oc A \mrarr B) \fCenter \oc A \mrarr B$
\DisplayProof
 \\
\end{tabular}
 }
\end{center}

\begin{itemize}
\item[A3.] $\oc (A \mrarr B) \mrarr (\oc A \mrarr \oc B)$
\end{itemize}
\begin{center}
{\fns
\begin{tabular}{c}
\AX$A \fCenter A$
\UI$\bboxb A \fCenter \BCIRCB A$
\LeftLabel{\fns $W_K$}
\UI$\bboxb A \KANDORK \bboxb (A \mrarr B) \fCenter \BCIRCB A$
\UI$\WCIRCW (\bboxb A \KANDORK \bboxb (A \mrarr B)) \fCenter A$
\AX$B \fCenter B$
\BI$A \mrarr B \fCenter \WCIRCW (\bboxb A \KANDORK \bboxb (A \mrarr B)) \MARR B$
\UI$\bboxb (A \mrarr B) \fCenter \BCIRCB (\WCIRCW (\bboxb A \KANDORK \bboxb (A \mrarr B)) \MARR B)$
\LeftLabel{\fns $W_K$}
\UI$\bboxb A \KANDORK \bboxb (A \mrarr B) \fCenter \BCIRCB (\WCIRCW (\bboxb A \KANDORK \bboxb (A \mrarr B)) \MARR B)$
\UI$\WCIRCW (\bboxb A \KANDORK \bboxb (A \mrarr B)) \fCenter \WCIRCW (\bboxb A \KANDORK \bboxb (A \mrarr B)) \MARR B$
\UIC{$\WCIRCW (\bboxb A \KANDORK \bboxb (A \mrarr B)) \MANDOR \WCIRCW (\bboxb A \KANDORK \bboxb (A \mrarr B))\fCenter B$}
\LeftLabel{\fns reg}
\UIC{$\WCIRCW ((\bboxb A \KANDORK \bboxb (A \mrarr B)) \KANDORK (\bboxb A \KANDORK \bboxb (A \mrarr B))) \fCenter B$}
\UIC{$(\bboxb A \KANDORK \bboxb (A \mrarr B)) \KANDORK (\bboxb A \KANDORK \bboxb (A \mrarr B)) \fCenter \BCIRCB B$}
\LeftLabel{\fns $C_K$}
\UI$ \bboxb A \KANDORK \bboxb (A \mrarr B) \fCenter \BCIRCB B$
\UI$\WCIRCW (\bboxb A \KANDORK \bboxb (A \mrarr B)) \fCenter \wdiaw \bboxb B$
\RightLabel{\fns def}
\UI$\WCIRCW (\bboxb A \KANDORK \bboxb (A \mrarr B) \fCenter \oc B$
\LeftLabel{\fns co-reg}
\UI$\WCIRCW \bboxb A \MANDOR  \WCIRCW \bboxb (A \mrarr B) \fCenter \oc B$
\UI$\WCIRCW \bboxb (A \mrarr B) \fCenter \WCIRCW \bboxb A \MARR \oc B$
\UI$\wdiaw \bboxb (A \mrarr B) \fCenter \WCIRCW \bboxb A \MARR \oc B$
\LeftLabel{\fns def}
\UI$\oc (A \mrarr B) \fCenter \WCIRCW \bboxb A \MARR \oc B$
\UI$\WCIRCW \bboxb A \MANDOR \oc (A \mrarr B) \fCenter \oc B$
\UI$\oc (A \mrarr B) \MANDOR \WCIRCW \bboxb A \fCenter \oc B$
\UI$\WCIRCW \bboxb A \fCenter \oc (A \mrarr B) \MARR \oc B$
\UI$\wdiaw \bboxb A \fCenter \oc (A \mrarr B) \MARR \oc B$
\LeftLabel{\fns def}
\UI$\oc A \fCenter \oc (A \mrarr B) \MARR \oc B$
\UI$\oc (A \mrarr B) \MANDOR \oc A \fCenter \oc B$
\UI$\oc A \MANDOR \oc (A \mrarr B) \fCenter \oc B$
\UI$\oc (A \mrarr B) \fCenter \oc A \MARR \oc B$
\UI$\oc (A \mrarr B) \fCenter \oc A \mrarr \oc B$
\DisplayProof
 \\
\end{tabular}
 }
\end{center}

\begin{itemize}
\item[A4.] $\oc A\mrarr A$ \ and \ A7.\ $A\mrarr \wn A$
\end{itemize}

\begin{center}
{\fns 
\begin{tabular}{cc}
\AX$A \fCenter A$
\UI$\bboxb A \fCenter \BCIRCB A$
\UI$\WCIRCW \bboxb A \fCenter A$
\UI$\wdiaw \bboxb A \fCenter A$
\LeftLabel{\fns def}
\UI$\oc A \fCenter A$
\DisplayProof
 & 
\AX$A \fCenter A$
\UI$\BCIRCB A \fCenter \bdiab A$
\UI$A \fCenter \WCIRCW \bdiab A$
\UI$A \fCenter \wboxw \bdiab A$
\RightLabel{\fns def}
\UI$A \fCenter \wn A$
\DisplayProof
 \\
\end{tabular}
 }
\end{center}

\begin{itemize}
\item[A5.] $\oc A\mrarr \oc \oc A$ \ and \ A8. \ $\wn \wn A\mrarr \wn A$
\end{itemize}
\begin{center}
{\fns 
\begin{tabular}{cc}
\AX$A \fCenter A$
\UI$\bboxb A \fCenter \BCIRCB A$
\UI$\bboxb A \fCenter \bboxb A$
\UI$\WCIRCW \bboxb A \fCenter \wdiaw \bboxb A$
\RightLabel{\fns def}
\UI$\WCIRCW \bboxb A \fCenter \oc A$
\UI$\bboxb A \fCenter \BCIRCB \oc A$
\UI$\bboxb A \fCenter \bboxb \oc A$
\UI$\WCIRCW \bboxb A \fCenter \wdiaw \bboxb \oc A$
\RightLabel{\fns def}
\UI$\WCIRCW \bboxb A \fCenter \oc \oc A$
\UI$\wdiaw \bboxb A \fCenter \oc \oc A$
\LeftLabel{\fns def}
\UI$\oc A \fCenter \oc \oc A$
\DisplayProof
 & 
\AX$A \fCenter A$
\UI$\BCIRCB A \fCenter \bdiab A$
\UI$\bdiab A \fCenter \bdiab A$
\UI$\wboxw \bdiab A \fCenter \WCIRCW \bdiab A$
\LeftLabel{\fns def}
\UI$\wn A \fCenter \WCIRCW \bdiab A$
\UI$\BCIRCB \wn A \fCenter \bdiab A$
\UI$\bdiab \wn A \fCenter \bdiab A$
\UI$\wboxw \bdiab \wn A \fCenter \WCIRCW \bdiab A$
\LeftLabel{\fns def}
\UI$\wn \wn A \fCenter \WCIRCW \bdiab A$
\UI$\wn \wn A \fCenter \wboxw \bdiab A$
\RightLabel{\fns def}
\UI$\wn \wn A \fCenter \wn A$
\DisplayProof
 \\
\end{tabular}
 }
\end{center}

\begin{itemize}
\item[A6.] $\oc (A \mrarr B) \vdash \wn A \mrarr \wn B$
\end{itemize}

\begin{center}
{\fns 
\begin{tabular}{c}
\AX$A \fCenter A$
\AX$B \fCenter B$
\UI$\BCIRCB B \fCenter \bdiab B$
\UI$B \fCenter \WCIRCW \bdiab B$
\BI$A \mrarr B \fCenter A \MARR \WCIRCW \bdiab B$
\UI$\bboxb (A \mrarr B) \fCenter \BCIRCB (A \MARR \WCIRCW \bdiab B)$
\UI$\WCIRCW \bboxb (A \mrarr B) \fCenter A \MARR \WCIRCW \bdiab B$
\UI$A \MANDOR \WCIRCW \bboxb (A \mrarr B) \fCenter \WCIRCW \bdiab B$
\UI$\WCIRCW \bboxb (A \mrarr B) \MANDOR A \fCenter \WCIRCW \bdiab B$
\UI$A \fCenter \WCIRCW \bboxb (A \mrarr B) \MARR \WCIRCW \bdiab B$
\RightLabel{\fns FS}
\UI$A \fCenter \WCIRCW (\bboxb (A \mrarr B) \KKRARR \bdiab B)$
\UI$\BCIRCB A \fCenter \bboxb (A \mrarr B) \KKRARR \bdiab B$
\UI$\bdiab A \fCenter \bboxb (A \mrarr B) \KKRARR \bdiab B$
\UI$\wboxw \bdiab A \fCenter \WCIRCW (\bboxb (A \mrarr B) \KKRARR \bdiab B)$
\RightLabel{\fns co-FS}
\UI$\wboxw \bdiab A \fCenter \WCIRCW \bboxb (A \mrarr B) \MARR \WCIRCW \bdiab B$
\UI$\WCIRCW \bboxb (A \mrarr B) \MANDOR \wboxw \bdiab A \fCenter \WCIRCW \bdiab B$
\UI$\WCIRCW \bboxb (A \mrarr B) \MANDOR \wboxw \bdiab A \fCenter \wboxw \bdiab B$
\UI$\wboxw \bdiab A \MANDOR \WCIRCW \bboxb (A \mrarr B) \fCenter \wboxw \bdiab B$
\UI$\WCIRCW \bboxb (A \mrarr B) \fCenter \wboxw \bdiab A \MARR \wboxw \bdiab B$
\UI$\wdiaw \bboxb (A \mrarr B) \fCenter \wboxw \bdiab A \MARR \wboxw \bdiab B$
\UI$\wdiaw \bboxb (A \mrarr B) \fCenter \wboxw \bdiab A \mrarr \wboxw \bdiab B$
\DisplayProof
\end{tabular}
 }
\end{center}

\begin{itemize}
\item[A9.] $\wn \mbot \dashv\vdash \mbot$
 \ and \ A10. \ $\mbot \vdash \wn A$
\end{itemize}

\begin{center}
{\fns 
\begin{tabular}{ccc}
\AX$\mbot \fCenter \MTOPBOT$
\RightLabel{\fns $W_m$}
\UI$\mbot \fCenter \MTOPBOT \MANDOR \WCIRCW \bdiab \mbot$
\UI$\mbot \fCenter \MTOPBOT \MANDOR \WCIRCW \bdiab \mbot$
\UI$\mbot \fCenter \WCIRCW \bdiab \mbot \MANDOR \MTOPBOT$
\UI$\mbot \fCenter \WCIRCW \bdiab \mbot$
\UI$\mbot \fCenter \wboxw \bdiab \mbot$
\RightLabel{\fns def}
\UI$\mbot \fCenter \wn \mbot$
\DisplayProof
 & 
\AX$\mbot \fCenter \MTOPBOT$
\RightLabel{\fns nec}
\UI$\mbot \fCenter \WCIRCW \KTOPBOTK$
\UI$\BCIRCB \mbot \fCenter \KTOPBOTK$
\UI$\bdiab \mbot \fCenter \KTOPBOTK$
\UI$\wboxw \bdiab \mbot \fCenter \WCIRCW \KTOPBOTK$
\LeftLabel{\fns def}
\UI$\oc \mbot \fCenter \WCIRCW \KTOPBOTK$
\RightLabel{\fns conec}
\UI$\oc \mbot \fCenter \MTOPBOT$
\UI$\oc \mbot \fCenter \mbot$
\DisplayProof
 & 
\AX$\mbot \fCenter \MTOPBOT$
\RightLabel{\fns $W_m$}
\UI$\mbot \fCenter \MTOPBOT \MANDOR \WCIRCW \bdiab A$
\UI$\mbot \fCenter \MTOPBOT \MANDOR \WCIRCW \bdiab A$
\UI$\mbot \fCenter \WCIRCW \bdiab A \MANDOR \MTOPBOT$
\UI$\mbot \fCenter \WCIRCW \bdiab A$
\UI$\mbot \fCenter \wboxw \bdiab A$
\RightLabel{\fns def}
\UI$\mbot \fCenter \wn A$
\DisplayProof
 \\
\end{tabular}
 }
\end{center}


\section{Comparing derivations in Girard's calculus and D.LL}
\label{sec:some derivations}
Earlier on, we have discussed that the multi-type approach makes it possible to design calculi particularly suitable as tools of analysis. In particular, the calculus D.LL was introduced in the present paper with the aim of understanding the interaction between the additive and the multiplicative connectives mediated by the exponentials, as is encoded by the following derivable sequents:

\begin{center}
\begin{tabular}{l | l}
$\oc \aatop \dashv\vdash \mtop$ & $\wn \abot \dashv\vdash \mbot$ \\
$\oc A \mand \oc B \dashv\vdash \oc (A \aand B)$ & $\wn A \mor \wn B \dashv\vdash \wn (A \aor B)$ \\
\end{tabular}
\end{center}
In what follows, we compare the derivations of these sequents in Girard's calculus and in D.LL
Notice that the introduction via {\em controlled} weakening (emphasized by the label $W$) in the  derivation of $\oc \aatop \fCenter \mtop$ in Girard's calculus corresponds to the application of the {\em unrestricted} pure $\mathsf{K}_{\oc}$-type weakening rule $W_\oc$ in the D.LL derivation. Moreover, the right promotion with empty context in the  derivation of $\mtop \fCenter \oc \aatop$ in Girard's calculus is encoded in a sequence of  standard display and introduction rules, elicited by the rules nec and conec.

\begin{center}
\begin{tabular}{c}
\mc{1}{c}{derivations in Girard's calculus} \\
  \\
\AX$ \fCenter \mtop$
\LeftLabel{\fns $W$}
\UI$\oc \aatop \fCenter \mtop$
\DisplayProof

 \ 

\AX$ \fCenter \aatop$
\UI$ \fCenter \oc \aatop$
\UI$\mtop \fCenter \oc \aatop$
\DisplayProof
 \\
\end{tabular}
\end{center}

\begin{center}
\begin{tabular}{c}
\mc{1}{c}{derivations in D.LL} \\
 \\
\!\!\!\!\!\!\!\!\!\!\!\!\!\!\!\!\!\!\!\!\!\!
\AX$\MTOPBOT \fCenter \mtop$
\LeftLabel{\fns nec}
\UI$\WCIRCW \KTOPBOTK \fCenter \mtop$
\UI$\KTOPBOTK \fCenter \BCIRCB \mtop$
\LeftLabel{\fns $W_\oc$}
\UI$\KTOPBOTK \KANDORK \bboxb \aatop \fCenter \BCIRCB \mtop$
\UI$\WCIRCW (\KTOPBOTK \KANDORK \bboxb \aatop) \fCenter \mtop$
\LeftLabel{\fns coreg$_{\oc m}$}
\UI$\WCIRCW \KTOPBOTK \MANDOR \WCIRCW \bboxb \aatop \fCenter \mtop$
\UI$\WCIRCW \bboxb \aatop \MANDOR \WCIRCW \KTOPBOTK \fCenter \mtop$
\UI$\WCIRCW \KTOPBOTK \fCenter \WCIRCW \bboxb \aatop \MARR \mtop$
\LeftLabel{\fns conec}
\UI$\MTOPBOT \fCenter \WCIRCW \bboxb \aatop \MARR \mtop$
\UI$\WCIRCW \bboxb \aatop \MANDOR \MTOPBOT \fCenter \mtop$
\UI$\MTOPBOT \MANDOR \WCIRCW \bboxb \aatop \fCenter \mtop$
\LeftLabel{\fns $\MTOPBOT$}
\UI$\WCIRCW \bboxb \aatop \fCenter \mtop$
\UI$\wdiaw \bboxb \aatop \fCenter \mtop$
\LeftLabel{\fns def}
\UI$\oc \aatop \fCenter \mtop$
\DisplayProof
 \ 
\AX$\ATOPBOT \fCenter \aatop$
\LeftLabel{\fns $W_a$}
\UI$\ATOPBOT \AANDOR \MTOPBOT \fCenter \aatop$
\UI$\MTOPBOT \fCenter \aatop$
\LeftLabel{\fns nec}
\UI$\WCIRCW \KTOPBOTK \fCenter \aatop$
\UI$\KTOPBOTK \fCenter \BCIRCB \aatop$
\UI$\KTOPBOTK \fCenter \bboxb \aatop$
\UI$\WCIRCW \KTOPBOTK \fCenter \wdiaw \bboxb \aatop$
\RightLabel{\fns def}
\UI$\WCIRCW \KTOPBOTK \fCenter \oc \aatop$
\LeftLabel{\fns conec}
\UI$\MTOPBOT \fCenter \oc \aatop$
\UI$\mtop \fCenter \oc \aatop$
\DisplayProof
 \\
\end{tabular}
\end{center}

Below, notice that the  derivation of $\oc A \mand \oc B \vdash \oc (A \aand B)$ in Girard's calculus makes use of a {\em controlled} weakening (emphasized by the label $W$). Moreover, the right-introduction rule for $\aand$ internalizes a left-contraction (we emphasize the second observation by labelling the right introduction of $\aand$ with (C)).  The derivation of $\oc (A \aand B) \vdash \oc A \mand \oc B$ is somewhat dual to the derivation previously discussed, in that it makes use of a {\em controlled} contraction (emphasized by the label $C$), and the left-introduction rule for $\aand$ internalizes a left-weakening (we emphasize the second observation labelling the left-introduction of $\aand$ with (W)). 

\begin{center}
\begin{tabular}{cc}
\mc{2}{c}{derivations in Girard's calculus} \\
 & \\
\!\!\!\!\!\!\!\!\!\!\!\!\!\!\!
\AX$A \fCenter A$
\UI$\oc A \fCenter A$
\LeftLabel{\fns $W$}
\UI$\oc A \MANDOR \oc B \fCenter A$
\AX$B \fCenter B$
\UI$\oc B \fCenter B$
\LeftLabel{\fns $W$}
\UI$\oc A \MANDOR \oc B \fCenter B$
\LeftLabel{\fns $(C)$}
\BI$\oc A \MANDOR \oc B \fCenter A \aand B$
\UI$\oc A \MANDOR \oc B \fCenter \oc (A \aand B)$
\UI$\oc A \mand \oc B \fCenter \oc (A \aand B)$
\DisplayProof
 &
\AX$A \fCenter A$
\LeftLabel{\fns $(W)$}
\UI$A \aand B \fCenter A$
\UI$\oc (A \aand B) \fCenter A$
\UI$\oc (A \aand B) \fCenter \oc A$
\AX$B \fCenter B$
\LeftLabel{\fns $(W)$}
\UI$A \aand B \fCenter B$
\UI$\oc (A \aand B) \fCenter B$
\UI$\oc (A \aand B) \fCenter \oc B$
\BI$\oc (A \aand B) \MANDOR \oc (A \aand B) \fCenter \oc A \mand \oc B$
\LeftLabel{\fns $C$}
\UI$\oc (A \aand B) \fCenter \oc A \mand \oc B$
\DisplayProof
 \\
\end{tabular}
\end{center}
The  {\em controlled} weakening and contraction rules $W$ and $C$ applied above correspond, in the D.LL derivations of the same sequents below, to  the {\em unrestricted} pure $\mathsf{K}_{\oc}$-type weakening rule $W_\oc$ and  $C_\oc$; moreover, the internalized weakening and contraction in the additive introductions of $\aand$ are explicitly performed via the {\em unrestricted} pure $\mathsf{L}$-type rules $W_a$ and $C_a$.

\begin{center}
{\fns
\begin{tabular}{cc}
\mc{2}{c}{\normalsize derivations in D.LL} \\
 & \\

\!\!\!\!\!\!\!\!\!\!\!\!\!\!\!\!\!\!\!\!\!\!\!\!\!\!\!\!\!\!\!\!\!\!\!\!\!\!\!\!\!\!\!\!\!

\AX$A \fCenter A$
\UI$\bboxb A \fCenter \BCIRCB A$
\LeftLabel{\fns $W_\oc$}
\UI$\bboxb A \KANDORK \bboxb B \fCenter \BCIRCB A$
\UI$\WCIRCW (\bboxb A \KANDORK \bboxb B) \fCenter A$
\LeftLabel{\fns coreg$_{\oc m}$}
\UI$\WCIRCW \bboxb A \MANDOR \WCIRCW \bboxb B \fCenter A$

\AX$B \fCenter B$
\UI$\bboxb B \fCenter \BCIRCB B$
\LeftLabel{\fns $W_\oc$}
\UI$\bboxb B \KANDORK \bboxb A \fCenter \BCIRCB B$
\UI$\bboxb A \KANDORK \bboxb B \fCenter \BCIRCB B$
\UI$\WCIRCW (\bboxb A \KANDORK \bboxb B) \fCenter B$
\LeftLabel{\fns coreg$_{\oc m}$}
\UI$\WCIRCW \bboxb A \MANDOR \WCIRCW \bboxb B \fCenter A$
\BI$(\WCIRCW \bboxb A \MANDOR \WCIRCW \bboxb B) \AANDOR (\WCIRCW \bboxb A \MANDOR \WCIRCW \bboxb B) \fCenter A \aand B$
\LeftLabel{\fns $C_a$}
\UI$\WCIRCW \bboxb A \MANDOR \WCIRCW \bboxb B \fCenter A \aand B$
\LeftLabel{\fns reg$_{\oc m}$}
\UI$\WCIRCW (\bboxb A \KANDORK \bboxb B) \fCenter A \aand B$
\UI$\bboxb A \KANDORK \bboxb B \fCenter \BCIRCB A \aand B$
\UI$\bboxb A \KANDORK \bboxb B \fCenter \bboxb (A \aand B)$
\UI$\WCIRCW (\bboxb A \KANDORK \bboxb B) \fCenter \wdiaw \bboxb (A \aand B)$
\RightLabel{\fns def}
\UI$\WCIRCW (\bboxb A \KANDORK \bboxb B) \fCenter \oc (A \aand B)$
\LeftLabel{\fns coreg$_{\oc m}$}
\UI$\WCIRCW \bboxb A \MANDOR \WCIRCW \bboxb B \fCenter \oc (A \aand B)$
\UI$\WCIRCW \bboxb B \fCenter \WCIRCW \bboxb A \MARR \oc (A \aand B)$
\UI$\wdiaw \bboxb B \fCenter \WCIRCW \bboxb A \MARR \oc (A \aand B)$
\LeftLabel{\fns def}
\UI$\oc B \fCenter \WCIRCW \bboxb A \MARR \oc (A \aand B)$
\UI$\WCIRCW \bboxb A \MANDOR \oc B \fCenter \oc (A \aand B)$
\UI$\oc B \MANDOR \WCIRCW \bboxb A \fCenter \oc (A \aand B)$
\UI$\WCIRCW \bboxb A \fCenter \oc B \MARR \oc (A \aand B)$
\UI$\wdiaw \bboxb A \fCenter \oc B \MARR \oc (A \aand B)$
\LeftLabel{\fns def}
\UI$\oc A \fCenter \oc B \MARR \oc (A \aand B)$
\UI$\oc B \MANDOR \oc A \fCenter \oc (A \aand B)$
\UI$\oc A \MANDOR \oc B \fCenter \oc (A \aand B)$
\UI$\oc A \mand \oc B \fCenter \oc (A \aand B)$
\DisplayProof

 &
\!\!\!\!\!\!\!\!\!\!\!\!\!\!\!\!\!\!\!\!\!\!\!\!
\AX$A \fCenter A$
\LeftLabel{\fns $W_a$}
\UI$A \AANDOR B \fCenter A$
\UI$A \aand B \fCenter A$
\UI$\bboxb (A \aand B) \fCenter \BCIRCB A$
\UI$\bboxb (A \aand B) \fCenter \bboxb A$
\UI$\WCIRCW \bboxb (A \aand B) \fCenter \wdiaw \bboxb A$
\RightLabel{\fns def}
\UI$\WCIRCW \bboxb (A \aand B) \fCenter \oc A$

\AX$B \fCenter B$
\LeftLabel{\fns $W_a$}
\UI$B \AANDOR A \fCenter B$
\UI$A \AANDOR B \fCenter B$
\UI$A \aand B \fCenter B$
\UI$\bboxb (A \aand B) \fCenter \BCIRCB B$
\UI$\bboxb (A \aand B) \fCenter \bboxb B$
\UI$\WCIRCW \bboxb (A \aand B) \fCenter \wdiaw \bboxb B$
\RightLabel{\fns def}
\UI$\WCIRCW \bboxb (A \aand B) \fCenter \oc B$
\BI$\WCIRCW \bboxb (A \aand B) \MANDOR \WCIRCW \bboxb (A \aand B) \fCenter \oc A \mand \oc B$
\LeftLabel{\fns reg$_{\oc m}$}
\UI$\WCIRCW (\bboxb (A \aand B) \KANDORK \bboxb (A \aand B)) \fCenter \oc A \mand \oc B$
\UI$\bboxb (A \aand B) \KANDORK \bboxb (A \aand B) \fCenter \BCIRCB \oc A \mand \oc B$
\LeftLabel{\fns $C_\oc$}
\UI$\bboxb (A \aand B) \fCenter \BCIRCB \oc A \mand \oc B$
\UI$\WCIRCW \bboxb (A \aand B) \fCenter \oc A \mand \oc B$
\UI$\wdiaw \bboxb (A \aand B) \fCenter \oc A \mand \oc B$
\LeftLabel{\fns def}
\UI$\oc (A \aand B) \fCenter \oc A \mand \oc B$
\DisplayProof
 \\
\end{tabular}
 }
\end{center}
\section{A sneak preview: proper display calculi for general linear logic}
\label{sec:sneak}
In the present paper, we have mainly focused on the setting of {\em distributive} linear logic, i.e.~we have assumed that $\aand$ distributes over $\aor$. However, thanks to the modularity of proper multi-type calculi, the versions of linear logic in which this assumption is dropped are properly displayable in suitable multi-type settings in which additives are treated according to what done in \cite{GrecoPalmigianoLatticeLogic}.  Linear logic formulas can be encoded in the language of such calculi via certain (positional) translations. In what follows, we do not expand on their precise definition.\footnote{The new heterogeneous connectives $\pbboxbp_r $, $\pwdiawp_r $, and  $\pbdiabp_\ell$, $\pwboxwp_\ell $ bridge the $\mathsf{Linear}$ type with two new types, algebraically interpreted as distributive lattices. We refer to \cite{GrecoPalmigianoLatticeLogic} for more details. The only properties which are relevant to the present section are the usual ones of adjoint normal modalities.}

\begin{center}
\begin{tabular}{rcl}
$\oc A \mand \oc B \vdash \oc (A \aand B)$ & $\rightsquigarrow$ & $\wdiaw \bboxb A \mand \wdiaw \bboxb B \vdash \wdiaw \bboxb \Big(\pbboxbp_r (\pwdiawp_r A \pandp \pwdiawp_r B)\Big)$ \\
$\oc (A \aand B) \vdash \oc A \mand \oc B$ & $\rightsquigarrow$ & $\wdiaw \bboxb \Big(\pbdiabp_\ell (\pwboxwp_\ell A \pandp \pwboxwp_\ell B)\Big) \vdash \wdiaw \bboxb A \mand \wdiaw \bboxb B$. \\
\end{tabular}
\end{center}

Below, we derive the sequents $\oc A \mand \oc B \dashv\vdash \oc (A \aand B)$  in a non-distributive version of D.LL.  Also in this setting, the implicit applications of weakening and contraction internalized by the additive introductions of $\aand$ are now explicitly performed via the {\em unrestricted} weakening and contraction rules $W_\ell$ and $C_r$ respectively, which pertain to the new types.

\begin{center}
{\fns
\begin{tabular}{cc}
\!\!\!\!\!\!\!\!\!\!\!\!\!\!\!\!\!\!\!\!\!\!\!\!\!\!\!\!\!\!\!\!\!\!\!\!\!\!\!\!\!\!\!\!\!\!\!\!\!\!\!\!\!\!\!\!\!\!\!\!\!\!
\AX$A \fCenter A$
\UI$\PWCIRCWP A \fCenter \pwdiawp A$
\UI$A \fCenter \PBCIRCBP \pwdiawp A$
\UI$\bboxb A \fCenter \BCIRCB \PBCIRCBP \pwdiawp A$
\LeftLabel{\fns $W_\oc$}
\UI$\bboxb A \KANDORK \bboxb B \fCenter \BCIRCB \PBCIRCBP \pwdiawp A$

\UI$\WCIRCW (\bboxb A \KANDORK \bboxb B) \fCenter \PBCIRCBP \pwdiawp A$
\LeftLabel{\fns coreg$_{\oc m}$}
\UI$\WCIRCW \bboxb A \MANDOR \WCIRCW \bboxb B \fCenter \PBCIRCBP \pwdiawp A$
\UI$\PWCIRCWP (\WCIRCW \bboxb A \MANDOR \WCIRCW \bboxb B) \fCenter \pwdiawp A$
\AX$B \fCenter B$
\UI$\PWCIRCWP B \fCenter \pwdiawp B$
\UI$B \fCenter \PBCIRCBP \pwdiawp B$
\UI$\bboxb B \fCenter \BCIRCB \PBCIRCBP \pwdiawp B$
\LeftLabel{\fns $W_\oc$}
\UI$\bboxb B \KANDORK \bboxb A \fCenter \BCIRCB \PBCIRCBP \pwdiawp B$
\UI$\bboxb A \KANDORK \bboxb B \fCenter \BCIRCB \PBCIRCBP \pwdiawp B$

\UI$\WCIRCW (\bboxb A \KANDORK \bboxb B) \fCenter \PBCIRCBP \pwdiawp B$
\LeftLabel{\fns coreg$_{\oc m}$}
\UI$\WCIRCW \bboxb A \MANDOR \WCIRCW \bboxb B \fCenter \PBCIRCBP \pwdiawp B$

\UI$\PWCIRCWP (\WCIRCW \bboxb A \MANDOR \WCIRCW \bboxb B) \fCenter \pwdiawp B$
\BI$\PWCIRCWP (\WCIRCW \bboxb A \MANDOR \WCIRCW \bboxb B) \PANDORP \PWCIRCWP (\WCIRCW \bboxb A \MANDOR \WCIRCW \bboxb B) \fCenter \pwdiawp A \pandp \pwdiawp B$
\LeftLabel{\fns $C_r$}
\UI$\PWCIRCWP (\WCIRCW \bboxb A \MANDOR \WCIRCW \bboxb B) \fCenter \pwdiawp A \pandp \pwdiawp B$
\UI$\WCIRCW \bboxb A \MANDOR \WCIRCW \bboxb B \fCenter \PBCIRCBP \pwdiawp A \pandp \pwdiawp B$
\UI$\WCIRCW \bboxb A \MANDOR \WCIRCW \bboxb B \fCenter \pbboxbp (\pwdiawp A \pandp \pwdiawp B)$
\RightLabel{\fns def}
\UI$\WCIRCW \bboxb A \MANDOR \WCIRCW \bboxb B \fCenter A \aand B$

\LeftLabel{\fns reg$_{\oc m}$}
\UI$\WCIRCW (\bboxb A \KANDORK \bboxb B) \fCenter A \aand B$
\UI$\bboxb A \KANDORK \bboxb B \fCenter \BCIRCB A \aand B$
\UI$\bboxb A \KANDORK \bboxb B \fCenter \bboxb (A \aand B)$
\UI$\WCIRCW (\bboxb A \KANDORK \bboxb B) \fCenter \wdiaw \bboxb (A \aand B)$
\RightLabel{\fns def}
\UI$\WCIRCW (\bboxb A \KANDORK \bboxb B) \fCenter \oc (A \aand B)$
\LeftLabel{\fns coreg$_{\oc m}$}
\UI$\WCIRCW \bboxb A \MANDOR \WCIRCW \bboxb B \fCenter \oc (A \aand B)$
\UI$\WCIRCW \bboxb B \fCenter \WCIRCW \bboxb A \MARR \oc (A \aand B)$
\UI$\wdiaw \bboxb B \fCenter \WCIRCW \bboxb A \MARR \oc (A \aand B)$
\LeftLabel{\fns def}
\UI$\oc B \fCenter \WCIRCW \bboxb A \MARR \oc (A \aand B)$
\UI$\WCIRCW \bboxb A \MANDOR \oc B \fCenter \oc (A \aand B)$
\UI$\oc B \MANDOR \WCIRCW \bboxb A \fCenter \oc (A \aand B)$
\UI$\WCIRCW \bboxb A \fCenter \oc B \MARR \oc (A \aand B)$
\UI$\wdiaw \bboxb A \fCenter \oc B \MARR \oc (A \aand B)$
\LeftLabel{\fns def}
\UI$\oc A \fCenter \oc B \MARR \oc (A \aand B)$
\UI$\oc B \MANDOR \oc A \fCenter \oc (A \aand B)$
\UI$\oc A \MANDOR \oc B \fCenter \oc (A \aand B)$
\UI$\oc A \mand \oc B \fCenter \oc (A \aand B)$
\DisplayProof

 & 
\!\!\!\!\!\!\!\!\!\!\!\!\!\!\!\!\!\!\!\!\!\!\!\!\!\!\!\!\!\!\!\!\!\!\!\!\!\!\!\!\!\!
\AX$A \fCenter A$
\UI$\pwbox A \fCenter \PWCIRC A$
\LeftLabel{\fns $W_\ell$}
\UI$\pwbox A \PANDOR \pwbox B \fCenter \PWCIRC A$
\UI$\pwbox A \pand \pwbox B \fCenter \PWCIRC A$
\UI$\PBCIRC \pwbox A \pand \pwbox B \fCenter A$
\UI$\pbdia (\pwbox A \pand \pwbox B) \fCenter A$
\LeftLabel{\fns def}
\UI$A \aand B \fCenter A$
\UI$\bboxb (A \aand B) \fCenter \BCIRCB A$
\UI$\bboxb (A \aand B) \fCenter \bboxb A$
\UI$\WCIRCW \bboxb (A \aand B) \fCenter \wdiaw \bboxb A$
\RightLabel{\fns def}
\UI$\WCIRCW \bboxb (A \aand B) \fCenter \oc A$

\AX$B \fCenter B$
\UI$\pwbox B \fCenter \PWCIRC B$
\LeftLabel{\fns $W_\ell$}
\UI$\pwbox B \PANDOR \pwbox A \fCenter \PWCIRC B$
\LeftLabel{\fns $E_\ell$}
\UI$\pwbox A \PANDOR \pwbox B \fCenter \PWCIRC B$
\UI$\pwbox A \pand \pwbox B \fCenter \PWCIRC B$
\UI$\PBCIRC \pwbox A \pand \pwbox B \fCenter B$
\UI$\pbdia (\pwbox A \pand \pwbox B) \fCenter B$
\LeftLabel{\fns def}
\UI$A \aand B \fCenter B$
\UI$\bboxb (A \aand B) \fCenter \BCIRCB B$
\UI$\bboxb (A \aand B) \fCenter \bboxb B$
\UI$\WCIRCW \bboxb (A \aand B) \fCenter \wdiaw \bboxb B$
\RightLabel{\fns def}
\UI$\WCIRCW \bboxb (A \aand B) \fCenter \oc B$
\BI$\WCIRCW \bboxb (A \aand B) \MANDOR \WCIRCW \bboxb (A \aand B) \fCenter \oc A \mand \oc B$
\LeftLabel{\fns reg$_{\oc m}$}
\UI$\WCIRCW (\bboxb (A \aand B) \KANDORK \bboxb (A \aand B)) \fCenter \oc A \mand \oc B$
\UI$\bboxb (A \aand B) \KANDORK \bboxb (A \aand B) \fCenter \BCIRCB \oc A \mand \oc B$
\LeftLabel{\fns $C_\oc$}
\UI$\bboxb (A \aand B) \fCenter \BCIRCB \oc A \mand \oc B$
\UI$\WCIRCW \bboxb (A \aand B) \fCenter \oc A \mand \oc B$
\UI$\wdiaw \bboxb (A \aand B) \fCenter \oc A \mand \oc B$
\LeftLabel{\fns def}
\UI$\oc (A \aand B) \fCenter \oc A \mand \oc B$
\DisplayProof
 \\
\end{tabular}
 }
\end{center}

\end{document}